\let\oldmarginpar\marginpar
\renewcommand\marginpar[1]{\-\oldmarginpar[\raggedleft\footnotesize #1]%
{\raggedright\footnotesize #1}}
\newtheorem{thm}{Theorem}[section]
\newtheorem{lem}[thm]{Lemma}
\newtheorem{prop}[thm]{Proposition}
\newtheorem{cor}[thm]{Corollary}
\theoremstyle{definition}
\newtheorem{defn}[thm]{Definition}
\newtheorem*{conv}{Convention}
\theoremstyle{remark}
\newtheorem{rmk}[thm]{Remark}
\newtheorem{exam}[thm]{Example}
\newcommand{\tz}{\tilde{\zeta}}
\newcommand{\nt}{\nabla_\tau}
\newcommand{\C}{\mathbb{C}}
\newcommand{\N}{\mathbb{N}}
\newcommand{\Z}{\mathbb{Z}}
\newcommand{\R}{\mathbb{R}}
\newcommand{\Q}{\mathbb{Q}}
\newcommand{\g}{\gamma}
\renewcommand{\S}{\Sigma}
\newcommand{\s}{\sigma}
\newcommand{\de}{\delta}
\newcommand{\Ad}{\mathrm{Ad}}
\newcommand{\ad}{\mathrm{ad}}
\renewcommand{\a}{\alpha}
\newcommand{\Om}{\Omega}
\newcommand{\fo}{\mathfrak{o}}
\newcommand{\fu}{\mathfrak{u}}
\newcommand{\hfu}{\hat{\fu}}
\newcommand{\vr}{\varrho}
\newcommand{\hvr}{\hat{\varrho}}
\renewcommand{\b}{\beta}
\newcommand{\tb}{\tilde{\beta}}
\newcommand{\tbo}{\tilde{\beta}_0}
\newcommand{\brho}{\bar{\rho}}
\newcommand{\triv}{\mathrm{triv}}
\newcommand{\fb}{\mathfrak{b}}
\newcommand{\fz}{\mathfrak{z}}
\newcommand{\ddz}{\frac{dz}{z}}
\newcommand{\n}{\nabla}
\newcommand{\fg}{\mathfrak{g}}
\newcommand{\fh}{\mathfrak{h}}
\newcommand{\hG}{\hat{G}}
\newcommand{\hfg}{\hat{\fg}}
\newcommand{\hW}{\hat{W}}
\newcommand{\hT}{\hat{T}}
\newcommand{\hV}{\hat{V}}
\newcommand{\hft}{\hat{\ft}}
\newcommand{\ft}{\mathfrak{t}}
\newcommand{\fs}{\mathfrak{s}}
\newcommand{\tx}{\tilde{x}}
\newcommand{\ty}{\tilde{y}}
\newcommand{\tX}{\tilde{X}}
\newcommand{\tA}{\tilde{A}}
\newcommand{\hN}{\hat{N}}
\newcommand{\tw}{\tilde{w}}
\newcommand{\sG}{\mathcal{G}}
\newcommand{\nbr}{[\nabla]}
\newcommand{\cF}{\mathcal{F}}
\newcommand{\A}{\mathcal{A}}
\newcommand{\B}{\mathcal{B}}
\newcommand{\bB}{\bar{\B}}
\newcommand{\Ao}{\A_0}
\newcommand{\bF}{\bar{F}}
\newcommand{\BT}{\mathcal{B}}
\DeclareMathOperator{\GL}{\mathrm{GL}}
\DeclareMathOperator{\AT}{\mathbf{A}}
\DeclareMathOperator{\Hom}{\mathrm{Hom}}
\DeclareMathOperator{\Aff}{\mathrm{Aff}}
\DeclareMathOperator{\Res}{\mathrm{Res}}
\DeclareMathOperator{\Gal}{\mathrm{Gal}}
\DeclareMathOperator{\Lie}{\mathrm{Lie}}
\DeclareMathOperator{\Rep}{\mathrm{Rep}}
\DeclareMathOperator{\Spec}{\mathrm{Spec}}
\DeclareMathOperator{\tr}{\mathrm{tr}}
\DeclareMathOperator{\cC}{\mathcal{C}}
\DeclareMathOperator{\id}{\mathrm{id}}
\newcommand{\cCfr}{\cC^{\mathrm{fr}}}
\newcommand{\Dx}{\Delta^\times}
\newcommand{\hw}{\hat{w}}
\newcommand{\bG}{\bar{\Gamma}}
\DeclareMathOperator{\gl}{\mathfrak{gl}}
\newcommand{\bfy}{\mathbf{y}}
\newcommand{\bfS}{\mathbf{S}}
\newcommand{\bfr}{\mathbf{r}}
\newcommand{\bfA}{\mathbf{A}}
\DeclareMathOperator{\tM}{\widetilde{\mathcal{M}}}
\DeclareMathOperator{\cM}{\mathcal{M}}
\DeclareMathOperator{\cA}{\mathcal{A}}
\newcommand{\hn}{\hat{\n}}
\DeclareMathOperator{\slope}{slope}
\DeclareMathOperator{\Crit}{Crit}
\renewcommand{\P}{\mathbb P}
\newcommand{\fP}{\mathfrak{P}}
\title[Flat $G$-bundles and
  regular strata for reductive groups]{Flat $G$-bundles and
  regular strata for reductive groups}
\author{Christopher L.~Bremer}
\address{Department of Mathematics\\
  Louisiana State University\\
  Baton Rouge, LA 70803} 
\email{cbremer@math.lsu.edu} 
\author{Daniel
  S.~Sage} 
\email{sage@math.lsu.edu}
\email{}
\thanks{The research of the
  second author was partially supported by a grant from the Simons
  Foundation (\#281502).}
\subjclass[2010]{}
\keywords{}
\begin{document}

\begin{abstract}
  Let $\hG$ be an algebraic loop group associated to a reductive group
  $G$.  A fundamental stratum is a triple consisting of a point $x$ in
  the Bruhat-Tits building of $\hG$, a nonnegative real number $r$,
  and a character of the corresponding depth $r$ Moy-Prasad subgroup
  that satisfies a non-degeneracy condition.  The authors have shown
  in previous work how to associate a fundamental stratum to a formal
  flat $G$-bundle and used this theory to define its slope.  In this
  paper, the authors study fundamental strata that satisfy an
  additional regular semisimplicity condition.  Flat $G$-bundles that
  contain regular strata have a natural reduction of structure to a
  (not necessarily split) maximal torus in $\hG$, and the authors use
  this property to compute the corresponding moduli spaces.  This
  theory generalizes a natural condition on algebraic connections (the
  $\GL_n$ case), which plays an important role in the global analysis
  of meromorphic connections and isomonodromic deformations.
\end{abstract}

\maketitle
\section{Introduction}

The study of meromorphic connections on algebraic curves (or
equivalently, flat $\GL_n(\C)$-bundles) often reduces to the analysis
of the associated formal connections at each pole.  This
local-to-global approach has proven to be especially effective when
the principal part of the connection at any irregular singular point
has a diagonalizable leading term with distinct eigenvalues.  For
example, the first significant progress on the isomonodromy problem
for irregular singular differential equations came in a 1981 paper of
Jimbo, Miwa, and Ueno, in which they imposed this condition at the
singularities~\cite{JMU}.  Also in this context, Boalch has
constructed well-behaved moduli spaces of connections on $\P^1$ with
given formal isomorphism classes at the singularities and has further
exhibited the isomonodromy equations as an integrable system on an
appropriate Poisson manifold~\cite{Boa}.  Analogous results hold for
flat $G$-bundles, where $G$ is a complex reductive group~\cite{Fed}.
Other aspects of the monodromy map for flat $G$-bundles of this type
have been studied in \cite{BrLa, Boa4}.

While there is a very satisfactory picture of this type of connection,
the conditions imposed are quite restrictive.  Indeed, such
connections necessarily have integral slope at each singularity
whereas the slope of a rank $n$ formal connection at an irregular
singular point can be any positive rational number with denominator at
most $n$.  Moreover, many connections of particular interest are not
of this type.  Recall that in the $\GL_n$ case of the geometric
Langlands program, the role of Galois representations is played by
monodromy data associated to flat connections: over a smooth complex
curve $X$ or the formal punctured disk $\Dx=\Spec(F)$ depending on
whether one is in the global or local context.  By analogy with the
classical situation, one expects that connections corresponding to
cuspidal representations will not have regular semisimple leading
terms.  For example, Frenkel and Gross have constructed a rigid flat
$G$-bundle (for any reductive $G$) which corresponds to the Steinberg
representation at $0$ and a certain ``small'' supercuspidal
representation at $\infty$~\cite{FrGr}.  When $G=GL_2(\C)$, this is
just the classical Airy connection.  Here, the leading term at the
irregular singular point at $\infty$ is nilpotent.

In \cite{BrSa1}, the authors generalized Boalch's results mentioned
above to a much wider class of meromorphic connections.  This was done
through the introduction of a new notion of the ``leading term'' of a
formal connection in terms of a geometric version of the theory of
fundamental strata familiar from $p$-adic representation theory (see,
for example, ~\cite{BuKu1}).  Let $F=\C((z))$ be the field of formal
Laurent series with ring of integers $\fo=\C[[z]]$.  A
$\GL_n(F)$-stratum is a triple $(P,r,\b)$ with $P\subset\GL_n(F)$ a
parahoric subgroup, $r$ a nonnegative integer, and $\b$ a functional
on the quotient of congruent subalgebras $\fP^r/\fP^{r+1}$.  The
stratum is fundamental if $\b$ satisfies a certain nondegeneracy
condition.  Let $(\hV,\hn)$ be a rank $n$ connection over the formal
punctured disk $\Dx=\Spec(F)$.  After fixing a trivialization for
$\hV$, the matrix of the connection $[\hn]$ is an element of
$\gl_n(F)\frac{dz}{z}$.  In particular, it induces a functional on
$\gl_n(F)$ via taking the residue of the trace form.  We say that
$(\hV,\hn)$ contains the stratum $(P,r,\b)$ if this functional kills
$\fP^{r+1}$ and induces $\b$ on the quotient space.  Every connection
contains a fundamental strata, and each such stratum should be viewed
as a ``correct'' leading term of the connection.  For example, a
stratum determines the slope of an irregular connection if and only if
it is fundamental.  The Frenkel-Gross connection does not contain a
fundamental stratum at the irregular singular point with respect to
the usual filtration (with $P=GL_n(\fo)$), but it does with respect to
a certain Iwahori subgroup.

The key property that allows one to construct smooth moduli spaces of
global connections is for the corresponding formal connections to
contain \emph{regular strata}.  These are fundamental strata which are
centralized in a graded sense by a possibly nonsplit maximal torus
$S\subset\GL_n(F)$.  For example, if $[\n]=(M_{-r}z^{-r}+
M_{-r+1}z^{-r+1}+\dots)\frac{dz}{z}$ with $M_i\in\gl_n(\C)$ and
$M_{-r}$ regular semisimple, then it contains a regular stratum
$(\GL_n(\fo),r,\b)$ centralized by the diagonal torus.  Only certain
conjugacy classes of maximal tori can centralize a regular stratum,
and only \emph{uniform} maximal tori--maximal tori which are the
product of some number of copies of $E^\times$ for some field
extension $E$ of $F$--are considered in~\cite{BrSa1,BrSa2}.  The
Frenkel-Gross connection for $\GL_n(\C)$ contains a regular stratum
centralized by a maximal torus isomorphic to $F[z^{1/n}]^\times$. If
$(\hV,\hn)$ contains a regular stratum $(P,r,\b)$ centralized by $S$,
then we show that its matrix is gauge-equivalent to an element of $\fs
\frac{dz}{z}$, where $\fs=\Lie(S)$.  In fact, we construct a certain
affine subvariety of $\fs_{-r}/\fs_{-1}$ called the variety of
$S$-formal types of depth $r$, which admits a free action of $\hW_S$,
the relative affine Weyl group of $S$.  (Here, $\fs_k$ is the $k$-th
piece of the natural filtration on $\fs$.)  We then show that the
moduli space of such connections is isomorphic to the set of
$\hW_S$-orbits.

In \cite{BrSa1,BrSa2}, we generalize Boalch's results to meromorphic
connections on $\P^1$ which contain regular strata at each irregular
singular point.  In particular, consider meromorphic connections
$(V,\n)$ with singularities at $\bfy=(y_1,\dots,y_m)$ and which
contain regular strata $(P_i,r_i,\b_i)$ centralized by $S_i$ at each
$y_i$.  We then construct a Poisson manifold $\tM(\bfy,\bfS,\bfr)$ of
such connections with given ``framing data''.  If $\bfA$ is an
$m$-tuple of formal types with the combinatorics determined by $\bfS$
and $\bfr$, we also construct the space $\cM(\bfy,\bfA)$ (resp.
$\tM(\bfy,\bfA)$) of framable (resp. framed) connections with the
specified formal types.  The variety $\cM(\bfy,\bfA)$ is the
symplectic reduction of the symplectic manifold $\tM(\bfy,\bfA)$ via a
torus action.  The constructions of all of these spaces are
automorphic, in the sense that they are realized as the symplectic or
Poisson reduction of products of smooth varieties determined by local
data.  Finally, the monodromy map and the formal types map induce
orthogonal foliations on $\tM(\bfy,\bfS,\bfr)$.  Thus, the fibers of
the monodromy map are the leaves of an integrable system on
$\tM(\bfy,\bfS,\bfr)$ determined by the isomondromy equations while
the connected components of the $\tM(\bfy,\bfA)$'s are the symplectic
leaves of $\tM(\bfy,\bfS,\bfr)$.

The goal of this paper is to develop the local theory necessary to
obtain similar results for flat $G$-bundles.  In particular, we
generalize the theory of regular strata and its application to formal
$G$-bundles.  Our starting point is the geometric theory of
fundamental strata for reductive groups~\cite{BrSamink}, which we
review in Section~\ref{sec:prelim}.  Given any point $x$ in the
Bruhat-Tits building $\B$ for $G(F)$, Moy and Prasad have defined a
decreasing $\R$-filtration $\left(\fg_{x,r}\right)$ on $\fg(F)$ with a
discrete number of steps~\cite{MP1,MP2}.  A stratum is a triple
$(x,r,\b)$ where $x\in\B$, $r\in\R_{\ge 0}$, and $\b$ is a functional
on the $r$-th step $\fg_{x,r}/\fg_{x,r+}$ in the filtration.  In
\cite{BrSamink}, we show that every flat $G$-bundle contains a
fundamental stratum and the stratum depth $r$ is the same for all
of them.  We thus obtain a new invariant for formal flat $G$-bundles
called the slope.  These results are the geometric analogue of Moy and
Prasad's theorem on the existence of minimal $K$-types for admissible
representations of $p$-adic groups~\cite{MP1,MP2}.

Intuitively, regular strata are fundamental strata that satisfy a
graded version of regular semisimplicity. Regular strata do not appear
in the $p$-adic theory, though they have some points in common with
the semisimple strata considered for $p$-adic classical groups in
\cite{BuKu99, St}.  As a preliminary, we first study points in the building
compatible with a given Cartan subalgebra.  A point $x$ is compatible
with the Cartan subalgebra $\fs$ if the restriction of the filtration
given by $x$ to $\fs$ is the unique Moy-Prasad filtration on $\fs$.
If $\Ao\subset\B$ is a fixed rational apartment, Theorems~\ref{torcon}
and \ref{gx} give existence and classification results for Cartan
subalgebras graded compatible with a given point in $\Ao$.  We apply
these results in Corollary~\ref{strongcompat} to classify the set of
points in $\Ao$ compatible with some conjugate of $\fs$.

In the following section, we introduce the concept of an $S$-regular
stratum $(x,r,\b)$, where $S$ is a maximal torus in $G(F)$.  Roughly
speaking, this means that $x$ is compatible with the associated Cartan
subalgebra $\fs$ and that every representative of $\b$ has connected
centralizer a suitable conjugate of $S$.  The existence of an
$S$-regular stratum is a restrictive condition.  Recalling that the
classes of maximal tori in $\hG$ correspond bijectively to the
conjugacy classes in the Weyl group $W$, we show in
Corollary~\ref{regtorus} that it can only occur when $S$ corresponds
to a regular conjugacy class in $W$.  For example, when $G=\GL_n$,
such maximal tori are the uniform maximal tori and tori of the form
$S'\times F^\times$ where $S'$ is uniform in $\GL_{n-1}(\C)$.
Combining this with Corollary~\ref{strongcompat}, we obtain a
description of all points in $\Ao$ which can support a regular stratum
for a given conjugacy class of maximal tori.

Finally, in Section~\ref{isosect}, we study the category $\cC(S,r)$ of
formal flat $G$-bundles which contain an $S$-regular stratum of slope
$r$ and an associated category $\cCfr_x(S,r)$, depending on a choice
of compatible point $x$, of framed flat bundles.  (When $S$ is split,
we take $S=T(F)$ and only allow $x$ to be the vertex corresponding to
$G(\fo)$.)  We show that the framed categories are independent of the
choice of $x$. The moduli space of $\cCfr_x(S,r)$ can be viewed as
the set $\AT(S,r)$ of $S$-formal types of depth $r$--a certain affine
open (when $r>0$) subset of $\fs^\vee_{-r}/\fs^\vee_{0+}$ endowed with
a free action of the relative affine Weyl group $\hW_S$.
Theorem~\ref{diagthm} %5.1
states that any $\hn$ containing an $S$-regular stratum is
gauge-equivalent to a flat $G$-bundle determined by a formal type in
$\AT(S,r)$.  More precisely, the forgetful deframing functor
$\cCfr_x(S,r)\to \cC(S,r)$ induces the quotient map
$\AT(S,r)\to\AT(S,r)/\hW_S$ on moduli spaces.

We expect that the results on meromorphic connections in
\cite{BrSa1,BrSa2} can be generalized to meromorphic flat $G$-bundles
containing regular strata at each irregular singular point.  We are
also hopeful that these results will be of use in the geometric
Langlands program.  In particular, we anticipate that there is an
interpretation of fundamental strata for representations of affine
Kac-Moody algebras and that representations containing regular strata
should correspond to formal flat $G$-bundles containing regular
strata.

\section{Preliminaries}\label{sec:prelim}

Let $k$ be an algebraically closed field of characteristic $0$, and
let $G$ be a connected reductive group over $k$ with Lie algebra
$\fg$.  Fix a maximal torus $T\subset G$ with corresponding Cartan
subalgebra $\ft$.  Let $N=N(T)$ be the normalizer of $T$, so that the
Weyl group $W$ of $G$ is isomorphic to $N/T$.  The set of roots with
respect to $T$ will be denoted by $\Phi$.  Given $\alpha \in \Phi$,
$U_\a \subset G$ is the associated root subgroup and $\fu_\a \subset
\fg$ is the weight space for $\ft$ corresponding to $\a$.  We will
write $Z$ for the center of $G$ and $\fz$ for its Lie algebra.  We fix
a nondegenerate invariant symmetric bilinear form $\left<, \right>$ on
$\fg$ throughout.  Finally, $\Rep(G)$ denotes the category of
finite-dimensional representations of $G$ over $k$.

Let $F=k((z))$ be the field of formal Laurent series over $k$ with
ring of integers $\fo=k[[z]]$, and let $\Dx = \Spec (F)$ be the formal
punctured disk.  We denote the Euler differential operator on $F$ by
$\tau=z \frac{d}{d z}$. We set $\hG=G(F)$ and $\hfg=\fg\otimes_k F$;
note that $\hG$ represents the functor sending a $k$-algebra $R$ to
$G(R((z)))$.  We will use the analogous notation $\hat{H}$ and
$\hat{\fh}$ for any algebraic group $H$ over $k$.  Similarly, if $V$
is a representation of $G$, then $\hV=V\otimes F$ will denote the
corresponding representation of $\hG$.

The Bruhat-Tits building and the enlarged building of $\hG$ will be
denoted by $\bB$ and $\B$ respectively.  If $x\in\B$, we denote the
corresponding parahoric subgroup (resp. subalgebra) by $\hG_x$
(resp. $\hfg_x$).  The standard apartment in
$\B$ associated to the split rational torus $\hT=T(F)$ is an affine
space isomorphic to $X_*(T) \otimes_\Z \R$.  If $\R\subset k$, then
points in $\Ao$ may be viewed as elements of $\ft_\R$.  The map
$\Ao\to\ft_\R$ is induced by evaluating cocharacters at $1$.  We write
$\tx\in\ft_\R$ for the image of $x\in\Ao$.  If $x\in X_*(T) \otimes_\Z
\Q$, then $\tx\in\ft$ is defined for any $k$.  

\begin{conv}\label{conv}  If the notation $\tx$ is used for $x\in\Ao$,
  then either $k$ contains $\R$ or $x$ is a rational point of $\Ao$ (so $\tx\in\ft_\Q$).
\end{conv}

Let $\bar{F}$ be an algebraic closure of $F$.  Later in the paper, we
will need to consider elements of $T(\bar{F})$ of the form $z^v$ with
$v\in\ft_\Q$.  Recall that $\bar{F}$ is generated by $m$-th roots of
$z$.  Suppose that $u\in\bar{F}$ satisfies $u^m=z$.  If
$v\in\ft_{\frac{1}{m}\Z}$, one can define $z^v$ as the unique element
of $T(F[u])$ satisfying $\chi(z^v)=u^{d\chi(mv)}$ for all $\chi\in
X^*(T)$.  This, of course, depends on the choice of $m$ and $u$, but
it is well-defined up to multiplication by $\xi^v\in T$, where $\xi$
is an $m^{th}$ root of unity.  Since $\xi^v\in T$, and in particular
is fixed by $\Gal(\bar{F}/F)$ and killed by $\tau$, it will follow
that all results that involve $z^v$ will be independent of this
choice.  For convenience, we may assume that all elements of this form
are defined in terms of a coherent set of uniformizers for the finite
extensions of $\bar{F}$, i.e., a choice of elements $u_m\in\bar{F}$
for each $m\in\N$ satisfying $u_m^m=z$ and such that if $m'|m$, then
$u_m^{m/{m'}}=u_{m'}$.

\subsection{Moy-Prasad filtrations}

If $V$ is any finite-dimensional representation of $G$, then any point
$x\in\B$ induces a decreasing $\R$-filtration $\{\hV_{x,r}\}$ of $\hV$
by $\fo$-lattices called the Moy-Prasad filtration~\cite{GKM06,MP1}.
Since $g\hV_{x,r}=\hV_{gx,r}$, it suffices to recall the definition
for $x\in\Ao$, where it can be constructed in terms of a grading on
$V\otimes_k k[z,z^{-1}]$.  If $\chi\in X^*(T)$ and $V_\chi$ is the
corresponding weight space, then the $r$-th graded subspace is given
by \begin{equation}\label{graded} \hV_{x,\Ao} (r) = \bigoplus_{\langle\chi,x\rangle + m = r} V_\chi z^m \subset \hV.
\end{equation}
The grading depends on the choice of apartment.  However, since we only use gradings with respect to $\Ao$, we usually write $\hV_x(r)$ for $\hV_{x,\Ao}(r)$.  For any $r \in \R$, define
\begin{equation*}
\begin{aligned}
\hV_{x, r} & = \prod_{s \ge r} \hV_{x,\Ao}(s)\subset \hV; &
\hV_{x, r+} & = \prod_{s > r} \hV_{x,\Ao} (s) \subset \hV.
\end{aligned}
\end{equation*}
The collection of lattices $\{\hV_{x, r}\}$ is the Moy-Prasad filtration on $\hV$ associated to $x$.  The set $\Crit_x(V)$ of \emph{critical numbers} of $V$ at $x$ is the discrete, $\Z$-invariant subset of $\R$ for which $\hV_{x, r} / \hV_{x, r+}\cong \hV_{x,\Ao}(r) \ne \{0\}$.  It is easy to see that the sets of critical numbers associated to the adjoint and coadjoint representations coincide and are symmetric around $0$.  

There is also a corresponding filtration $\{\hG_{x,r}\}_{r\in\R_{\ge
    0}}$ of the parahoric subgroup $\hG_x=\hG_{x,0}$ for $x\in\B$. If
one sets $\hG_{x, r+}=\bigcup_{s>r}\hG_{x,s}$, then
$\hG_{x+}=\hG_{x,0+}$ is the pro-unipotent radical of $\hG_x$.  For $r
> 0$, there is a natural isomorphism $\hG_{x, r} / \hG_{x, r+} \cong
\hfg_{x, r} / \hfg_{x, r+}$~\cite{MP1}. On the other hand, $\hG_x/\hG_{x+}$ is
isomorphic to a reductive, maximal rank subgroup of $G$.  For
$x\in\Ao$, we give an explicit isomorphism.  Let $H_x\subset G$ be
the subgroup generated by $T$ and the root subgroups $U_\a$ such that
$d\a(\tx)\in\Z$.  (If $\C\subset k$, $H_x$ is the connected
centralizer of $\exp (2 \pi i \tx)\in G$.)  There is a homomorphism
$\theta'_x:H_x\to\hG_x$ defined on the generators of $H_x$ via
$T\hookrightarrow T(\fo)$ and $\theta'_x(U_\a(c))=U_\a(c
z^{-\a(\tx)})$ for $c\in k$. The induced map $\theta_x:H_x\to
\hG_x/\hG_{x+}$ is an isomorphism~\cite{BrSamink}.  It is easy to see
that the group $H_x$ acts on $\hV_x(r)$ for any $r$ and $\theta_x$
intertwines the representations $\hV_x(r)$ and
$\hV_{x,r}/\hV_{x,r+}$.

We collect the basic properties of these filtrations in the following proposition.

\begin{prop}\label{ev}   Take $V\in\Rep(G)$, and fix $x \in \Ao$ and
  $r \in \R$.
\begin{enumerate}
\item The space $\hV_x(r)$ is the eigenspace 
 corresponding to
the eigenvalue $r$ in $\hV$ for the differential operator $\tau + \tx$.
\item\label{ev2} An element $v \in \hV$ lies in $\hV_{x, r}$ if and only if 
$(\tau + \tx) (v) - r v \in \hV_{x, r+}$.
\item The set $\hV_x(r)$ constitutes a full set of coset representatives for 
the coset space $\hV_{x, r} / \hV_{x, r+}$.
\item If $X\in\hfg_x(s)$, then $\ad(X)(\hV_x(r))\subset \hV_x(r+s)$.
\end{enumerate}
\end{prop}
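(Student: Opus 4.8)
The plan is to prove part (1) directly from the grading formula~\eqref{graded} and then to obtain parts (2)--(4) as formal consequences of it.

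For (1), I would observe that on each line $V_\chi z^m$ occurring in~\eqref{graded} the Euler operator $\tau$ acts by the scalar $m$, while $\tx\in\ft$ acts --- through the representation of $\fg$ on $V$ --- by $d\chi(\tx)=\langle\chi,x\rangle$; hence $\tau+\tx$ acts there by the scalar $m+\langle\chi,x\rangle$. Expanding an arbitrary element of $\hV=V\otimes_k F$ in its weight-and-degree expansion (a formal sum $\sum_{\chi,m}v_{\chi,m}z^m$ with $v_{\chi,m}\in V_\chi$ and $z$-degrees bounded below), the operator $\tau+\tx$ acts term by term, so the $r$-eigenspace consists exactly of those sums with $v_{\chi,m}=0$ whenever $m+\langle\chi,x\rangle\ne r$. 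For a fixed $r$ and $\chi$ there is at most one admissible $m\in\Z$, so such a sum is finite and equals the finite direct sum $\hV_x(r)$ of~\eqref{graded}.

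Given (1), parts (2) and (3) are bookkeeping with the degree decomposition: write $v\in\hV$ as a formal sum $v=\sum_s v_s$ with $v_s\in\hV_x(s)$ and support bounded below. By (1), $(\tau+\tx)v-rv=\sum_s(s-r)v_s$, whose degree-$s$ component is $(s-r)v_s$; this element lies in $\hV_{x,r+}=\prod_{s>r}\hV_x(s)$ precisely when $(s-r)v_s=0$ for all $s\le r$, i.e.\ when $v_s=0$ for all $s<r$, i.e.\ when $v\in\hV_{x,r}$ --- this is (2). The same decomposition gives $\hV_{x,r}=\hV_x(r)\oplus\hV_{x,r+}$, so the composite $\hV_x(r)\hookrightarrow\hV_{x,r}\twoheadrightarrow\hV_{x,r}/\hV_{x,r+}$ is bijective, which is (3). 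For (4), I would use that $\tau+\tx$ is a derivation for the $\hfg$-module structure on $\hV$: since $\tau$ is a derivation of $F$ and $\tx\in\ft\subset\fg$ acts compatibly on $\hfg$ (by $\ad$) and on $\hV$ (by the representation), one has $(\tau+\tx)(X\cdot v)=\bigl((\tau+\tx)X\bigr)\cdot v+X\cdot\bigl((\tau+\tx)v\bigr)$, where $(\tau+\tx)X$ denotes $\tau(X)+\ad(\tx)(X)$. Applying (1) to the adjoint representation and to $V$, if $X\in\hfg_x(s)$ and $v\in\hV_x(r)$ then $X\cdot v$ is a $(\tau+\tx)$-eigenvector of eigenvalue $r+s$, hence lies in $\hV_x(r+s)$.

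The only genuinely delicate point is the handling of these (generally infinite) formal sums: one must check that every element of $\hV$ has a well-defined expansion $\sum_s v_s$ with $\{\,s : v_s\ne 0\,\}$ bounded below and contained in the discrete set $\Crit_x(V)$, and that $\tau+\tx$ acts on it component-wise, so that ``$r$-eigenvector'' really forces every off-diagonal component to vanish. Once this is in place I anticipate no further obstacle; parts (2)--(4) are purely formal consequences of (1).
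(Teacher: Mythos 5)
The paper states Proposition~\ref{ev} as a list of ``basic properties'' and provides no proof of its own, so there is nothing to compare against directly; your argument stands on its own and is correct. Your reduction of parts (2)--(4) to the eigenspace description in (1), and your verification of (1) from the grading formula~\eqref{graded} together with the observation that $\tau+\tx$ acts on $V_\chi z^m$ by the scalar $m+\langle\chi,x\rangle$, is exactly the expected argument. The ``delicate point'' you flag is real but handled correctly: since $V$ is finite-dimensional it has finitely many weights, so each element of $\hV$ has a well-defined expansion $\sum_s v_s$ over the discrete, bounded-below set $\Crit_x(V)$, and $\tau+\tx$ acts component-wise; in particular a genuine $r$-eigenvector must have all off-degree components vanish, forcing it into the finite sum $\hV_x(r)$. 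One minor remark: for ``$\tau+\tx$'' to make literal sense as an operator with eigenvalue $r\in\R$ you are implicitly invoking the paper's Convention~\ref{conv} (either $\R\subset k$ or $x$ is rational, so $\tx\in\ft$ and $\langle\chi,x\rangle\in k$); it would be worth stating that you are using it.
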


If $E$ is a degree $e$ extension of $F$, then these gradings and
filtrations extend naturally to $V(E)$ by setting the valuation of the
uniformizer in $E$ to be $1/e$.  For example, if $E=F((u))$ with
$u^e=z$, then $\ft(E)(m/e)=u^m \ft$ for $m\in\Z$.
Proposition~\ref{ev} remains true if one interprets $\tau$ as
$\frac{1}{e} u \frac{d}{d u}$, its unique extension to $E$.  If
$\Gamma=\Gal(E/F)$, then $V_{x,r}=V(E)_{x,r}^\Gamma$ and similarly for
the gradings.

We will frequently need to compare Moy-Prasad filtrations on $\hG$
with filtrations on nonsplit maximal tori.  For a torus, there is a
unique Moy-Prasad filtration on the maximal bounded subgroup and the
Cartan subalgebra.  If $S$ is a maximal torus with Lie algebra $\fs$,
then we may define graded and filtered pieces by extending scalars to
a finite splitting field $E$ and conjugating the analogous data for
$T(E)$ and $\ft(E)$.  To be more explicit, if $g\in G(E)$ satisfies
$\Ad(g^{-1})\fs(E)=\ft(E)$ and $\Gamma=\Gal(E/F)$, then $\fs(r) =
\left(\Ad(g)(\ft(E)(r))\right)^\Gamma$, $\fs_r =
\left(\Ad(g)(\ft(E)_{r})\right)^\Gamma$, and $S_r = \left( g(T(E))_{r
  })g^{-1} \right)^\Gamma$.  These definitions do not depend on the
choice of $E$ or $g$.  Indeed, if one takes another diagonalizer $g'$
and takes $E$ big enough so that $g,g'\in G(E)$, then $g'=gn$ for
$n\in N(E)$.  Independence now follows, since $N(E)$ fixes the grading
and filtrations on $\ft(E)$ and $T(\fo_E)$.  Observe that this
definition rescales the index on filtrations for nonsplit groups
constructed in \cite{MP2} by a factor of $1/e$, but it is effectively
the same as that appearing in \cite[Section 10]{BroSt} and
\cite[Section 5]{GKM06}.

Moy-Prasad filtrations are well-behaved under duality.  If $W$ is an
$\fo$-module, let $W^\vee$ be its smooth ($k$-linear) dual.  Note that
if $V\in\Rep(G)$, then there is a $\hG$-isomorphism
$\widehat{(V^\vee)}=V^\vee\otimes F\overset{\kappa}{\to} (\hV)^\vee$,
$\kappa(\a)(v)=\Res \a(v)\ddz$; we will abuse notation slightly by
denoting both by $\hV^\vee$.  However, $\hV^\vee_{x,r}$ will always
mean $(\hV^\vee)_{x, r}$.  We recall the following facts.  (See
\cite{BrSamink} for more details.)

\begin{prop}\label{dual} Fix $V\in\Rep(G)$, $x\in\B$, and $r\in\R$.
\begin{enumerate}\item The isomorphism $\kappa$ restricts to give
  $\hG_x$-isomorphisms $\hV^\vee_{x,-r}\cong \hV^\perp_{x,r+}$ and
  $\hV^\vee_{x,-r+}\cong \hV^\perp_{x,r}$.  
\item There is a natural $\hG_x$-invariant perfect pairing
  \begin{equation*}
 \hV^\vee_{x, -r}/\hV^\vee_{x, -r+} \times \hV_{x, r} / \hV_{x, r+} \to k,
 \end{equation*} 
 which induces the isomorphism $(\hV_{x, r}/ \hV_{x, r+})^\vee \cong
 \hV^\vee_{x, -r}/\hV^\vee_{x, -r+}$.
\item There are $\hG_x$-isomorphisms $(\hV_{x,r})^\vee\cong
  \hV^\vee/\hV^\vee_{x,-r+}$ and $(\hV_{x,r+})^\vee\cong
  \hV^\vee/\hV^\vee_{x,-r}$.
\item \label{dual4} Suppose that $V$ is endowed with a nondegenerate
  $G$-invariant symmetric bilinear form $\left(,\right)$.  Then,
  $\left(,\right)_\ddz\overset{\text{def}}{=}\Res(\left(,\right)\ddz$ induces $\hG_x$-isomorphisms
  $\hV^\vee_{x,-r}\cong\hV_{x,-r}$ and
  $\hV^\vee_{x,-r+}\cong\hV_{x,-r+}$; in particular, $(\hV_{x, r}/
  \hV_{x, r+})^\vee \cong \hV_{x, -r}/\hV_{x, -r+}$.
\end{enumerate}
\end{prop}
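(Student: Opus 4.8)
The plan is to reduce every statement to a concrete computation in the apartment $\Ao$ using the grading \eqref{graded}, and then to deduce the general case $x\in\B$ by translating by a group element (using $g\hV_{x,r}=\hV_{gx,r}$ and the fact that $\kappa$ is $\hG$-equivariant). So I would first fix $x\in\Ao$. The key elementary observation is that under the pairing $\kappa(\a)(v)=\Res\,\a(v)\ddz$, a weight vector $\psi z^{-m}\in V^\vee_{-\chi}z^{-m}$ pairs nontrivially with $w z^{n}\in V_\chi z^{n}$ precisely when $n=m-1$, i.e.\ when the $\langle\chi,x\rangle$-grading index on the $\hV$-side plus the grading index on the $\hV^\vee$-side sum to $-1$ (the residue picks out the $z^{-1}$-coefficient). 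Concretely, if $\psi z^{-m}$ has grade $-\langle\chi,x\rangle - m$ and $w z^{n}$ has grade $\langle\chi,x\rangle + n$, these sum to $n-m$, which equals $-1$ exactly on the matching pieces. This immediately gives that $\kappa$ identifies $\hV^\vee_x(s)$ with the dual of $\hV_x(-s)$ for every critical number, and hence (taking products over $s>-r$ versus $s\ge -r$) matches $\hV^\vee_{x,-r}$ with the annihilator $\hV^\perp_{x,r+}$ and $\hV^\vee_{x,-r+}$ with $\hV^\perp_{x,r}$. That proves (1); the $\hG_x$-equivariance is automatic since $\kappa$ is $\hG$-equivariant and $\hG_x$ preserves both filtrations.

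For (2), the perfect pairing is simply the one induced by $\kappa$ on the successive quotients: from (1) we get $\hV^\vee_{x,-r}/\hV^\vee_{x,-r+}\cong \hV^\perp_{x,r+}/\hV^\perp_{x,r}$, and the standard linear-algebra fact that $\hV^\perp_{x,r+}/\hV^\perp_{x,r}$ is canonically the dual of $\hV_{x,r}/\hV_{x,r+}$ finishes it; $\hG_x$-invariance of the pairing is inherited from $\hG$-invariance of $\kappa$. Statement (3) is the analogous computation for the quotients $\hV/\hV_{x,s}$ rather than the subquotients: dualizing the short exact sequence $0\to\hV_{x,r}\to\hV\to\hV/\hV_{x,r}\to 0$ and using that $\hV^\vee$ is the full smooth dual together with part (1) identifies $(\hV_{x,r})^\vee$ with $\hV^\vee/(\hV_{x,r})^\perp=\hV^\vee/\hV^\vee_{x,-r+}$, and similarly with $r+$ in place of $r$. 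One should check the smoothness/topological duality point — that the smooth dual of the lattice $\hV_{x,r}$ is exactly the quotient $\hV^\vee/\hV^\vee_{x,-r+}$ and not some completion — but this follows because $\hV_{x,r}$ is an $\fo$-lattice and the filtration is exhaustive and separated. Finally (4): the nondegenerate invariant form $(\,,\,)$ on $V$ gives a $G$-isomorphism $V\cong V^\vee$, hence a $\hG$-isomorphism $\hV\cong\hV^\vee$ under which $(\,,\,)_\ddz$ corresponds to $\kappa$; this isomorphism is visibly compatible with the gradings on $\Ao$ (an invariant form pairs $V_\chi$ with $V_{-\chi}$, so it sends grade-$s$ pieces to grade-$(-s)$ pieces), so it carries $\hV_{x,-r}$ to $\hV^\vee_{x,-r}$ and $\hV_{x,-r+}$ to $\hV^\vee_{x,-r+}$, and the last assertion is then (2) transported across this isomorphism.

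The main obstacle I anticipate is not any single step but rather getting the grading-index bookkeeping exactly right — in particular the off-by-one coming from the $\ddz$ in $\kappa$, which is what makes the dual grading on $\hV^\vee$ line up with $r\mapsto -r$ rather than $r\mapsto -r-1$ or similar, and keeping straight the distinction between $r$ and $r+$ (equivalently, between $\ge$ and $>$) when passing between a lattice, its successive quotient, and its annihilator. The passage from $\Ao$ to general $x\in\B$ is routine given $G$-equivariance, and the extension-of-scalars remarks preceding the proposition reduce the nonsplit considerations to the split case, so no real difficulty arises there. Since the excerpt itself defers to \cite{BrSamink} for details, I would present the above as a sketch emphasizing the weight-space computation and the three linear-algebra reductions (sub/quotient duality, $\fo$-lattice smooth duality, and transport along the form-induced isomorphism).
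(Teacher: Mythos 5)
The paper itself does not prove this proposition; it is stated as a recollection, with the reader directed to \cite{BrSamink}. Your overall approach -- reduce to $x\in\Ao$, compute the residue pairing weight-space by weight-space using the grading \eqref{graded}, then handle general $x\in\B$ by $\hG$-equivariance and nonsplit tori by base change -- is the standard one and is structurally correct. However, you explicitly flagged the off-by-one bookkeeping as the main danger and then tripped over it in two places, and in both cases the stated intermediate claim is inconsistent with the (correct) conclusion you then draw from it.

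In part (1): $\kappa(\psi z^{-m})(w z^{n}) = \Res\bigl(\psi(w)\,z^{n-m}\ddz\bigr)$, and since $\ddz = z^{-1}\,dz$, this residue is nonzero iff $n-m-1=-1$, i.e.\ iff $n=m$, not $n=m-1$; equivalently the grades must sum to $0$, not $-1$. Your next sentence, that $\kappa$ identifies $\hV^\vee_x(s)$ with the dual of $\hV_x(-s)$, is the right statement and is what the proposition needs -- but it follows from the sum being $0$, while the stated $-1$ would instead pair $\hV^\vee_x(s)$ against $\hV_x(-s-1)$ and shift the annihilator identification to $\hV^\vee_{x,-r}\cong\hV^\perp_{x,(r-1)+}$, contradicting the proposition. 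In part (4): the $G$-map $\iota\colon V\to V^\vee$, $v\mapsto (v,\cdot)$, sends $V_\chi$ into $(V^\vee)_\chi$ (the $\chi$-weight space of $V^\vee$ consists precisely of the functionals supported on $V_{-\chi}$), so the induced $F$-linear map $\hV\to\hV^\vee$ \emph{preserves} the $x$-grading rather than reversing it. Your parenthetical that it ``sends grade-$s$ pieces to grade-$(-s)$ pieces'' would give $\hV_{x,-r}\cong\hV^\vee_{x,r}$, whereas the proposition (and your own concluding clause) assert $\hV_{x,-r}\cong\hV^\vee_{x,-r}$. In both places the final conclusions are right but do not follow from the reasons given; correcting the two sign/index slips closes the argument.
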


\subsection{Formal flat $G$-bundles and strata}

A formal principal $G$-bundle $\sG$ is a principal $G$-bundle over the
formal punctured disk $\Dx$.  The $G$-bundle $\sG$ induces a tensor
functor from $\Rep(G)$ to the category of formal vector bundles via
$V\mapsto V_\sG = \sG \times_G V$, and this tensor functor uniquely
determines $\sG$.  Formal principal $G$-bundles are trivializable, so
we may always choose a trivialization $\phi : \hG \to \sG$.  Moreover,
there is a left action of $\hG$ on the set of trivializations of
$\sG$.

A flat structure on a principal $G$-bundle is a formal derivation $\n$
that determines a compatible family of flat connections on $V_\sG$ for
all $V \in \Rep(G)$.  In terms of a fixed trivialization $\phi$ for
$\sG$, $\n$ acts on $V_\sG$ as the operator $d + \nbr_\phi \wedge$,
where $d$ is the ordinary exterior derivative and $\nbr_\phi \in
\Om^1_{F}(\hfg)$ is the \emph{matrix} of $\n$ in the trivialization
$\phi$.  Since $\Om^1_{F}(\hfg)\cong\Om^1_{F}(\hfg^\vee)$ via the
choice of invariant form on $fg$ and
$\Om^1_{F}(\hfg^\vee)\cong\hfg^\vee$ canonically, we can view
$\nbr_\phi$ as a functional on $\hfg$.  The group $\hG$ acts on
$\nbr_\phi$ by gauge transformations, namely
\begin{equation}\label{gauge}
\nbr_{g \phi} = g \cdot \nbr_\phi = \Ad^*(g) (\nbr) - (dg) g^{-1}.
\end{equation}
The right-invariant Maurer-Cartan form $(dg) g^{-1}$ lies in
$\Om^1_{F}(\hfg)$.  Note that if $\iota_\tau$ is the inner derivation
by $\tau$, then we can write $[\n]_\phi=[\nt]_\phi\ddz$, where
$[\nt]_\phi=\iota_\tau[\n]_\phi\in\hfg$.  The flat $G$-bundle
$(\sG,\n)$ is called regular singular if the flat connection $V_\sG$
is regular singular for each $V\in\Rep(G)$; otherwise, it is irregular
singular.

We now recall some results from the theory of minimal $K$-types (or
fundamental strata) for formal flat $G$-bundles developed
in~\cite{BrSamink}.  Given $x\in\B$ and a nonnegative real number $r$,
a $G$-stratum of depth $r$ is a triple $(x,r,\b)$ with $\b \in
(\hfg_{x, r}/\hfg_{x, r+})^\vee$.  We say that $\tb \in \hfg^\vee_{x,
  -r}$ is a representative for $\b$ if the coset $\tb+\hfg^\vee_{x,
  -r+}$ corresponds to $\b$ under the isomorphism $\hfg^\vee_{x, -r} /
\hfg^\vee_{x, -r+}\cong (\hfg_{x, r}/\hfg_{x, r+})^\vee$.  If
$x\in\Ao$, we let $\tbo$ denote the unique homogeneous representative
in $\hfg^\vee_x(-r)$.  The loop group $\hG$ acts on the set of strata
with $g\cdot(x,r,\b)$ the stratum determined by $gx$, $r$, and the
coset $\Ad^*(g)(\tb)+\hfg^\vee_{gx, -r+}$.

A stratum is called \emph{fundamental} if $\b$ is a semistable point
of the $\hG_x/\hG_{x+}$-representation $(\hfg_{x,
  r}/\hfg_{x,r+})^\vee$; equivalently, the corresponding coset
$\tb+\hfg^\vee_{x,-r+}$ does not contain a nilpotent element.  This
can only occur when $r\in\Crit_x(\fg)$.  If $x\in\Ao$, then a stratum
is nonfundamental if and only if the homogeneous representative $\tbo$
is nilpotent.

Given $x \in \Ao$, we say that the flat $G$-bundle $(\sG, \n)$
contains the stratum $(x,r, \b)$ with respect to the trivialization
$\phi$ for $\sG$ if $[\n]_\phi - \tx \ddz \in \hfg_{x,r+}^\perp$ and
the coset $\left([\n]_\phi - \tx \ddz \right) + \hfg_{x, -r+}^\vee$
determines the functional $\b \in (\hfg_{x, r} / \hfg_{x,r+})^\vee$.
Note that if $r>0$, then $\b$ is the functional determined by
$[\n]_\phi$; moreover, if $x'$ and $x$ have the same image in $\bB$,
then $(x',r,\b)$ is also contained in $(\sG, \n)$.

Given a flat $G$-bundle $(\sG, \n)$, we say that its \emph{slope} is
the infimum of the depths of the strata contained in it.  In
\cite{BrSamink}, it is shown that this infimum is actually attained
and that it is a rational number.  More precisely, we have the
following theorem.

\begin{thm}\cite[Theorem 3.15]{BrSamink}\label{MP}
  Every flat $G$-bundle $(\sG, \n)$ contains a fundamental stratum
  $(x,\slope(\sG),\b)$, where $x$ is an optimal point in $\Ao$ in the
  sense of \cite{MP1}; the slope is positive if and only if $(\sG,
  \n)$ is irregular singular.  Moreover, the following statements
  hold.
\begin{enumerate}
\item\label{MP1} If $(\sG, \n)$ contains the stratum $(y,r', \b')$, then
$r' \ge \slope(\sG)$.  
\item \label{MP3} If $(\sG,\n)$ is irregular, a stratum $(y,r', \b')$
  contained in $(\sG, \n)$ is fundamental if and only if $r' = \slope(\sG)$.
% \item\label{MP2} If $(\sG,\n)$ is irregular, then any two fundamental strata contained in it are associate.\marginpar{Changed!}
\end{enumerate}
\end{thm}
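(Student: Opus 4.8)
\noindent\emph{Proof strategy.} The plan is to adapt Moy and Prasad's proof of the existence of minimal $K$-types~\cite{MP1} to flat $G$-bundles. Fix a trivialization $\phi$, so that $[\nt]_\phi\in\hfg$. By Proposition~\ref{dual}, the assertion that $(\sG,\n)$ contains a stratum $(x,r,\b)$ with $x\in\Ao$ (with respect to $\phi$) unwinds to the condition $[\nt]_\phi-\tx\in\hfg_{x,-r}$, with $\b$ the image of $[\nt]_\phi-\tx$ in $\hfg_{x,-r}/\hfg_{x,-r+}$. Thus each pair $(\phi,x)$ carries a canonical leading-term stratum, of depth $d_\phi(x):=\max\{0,\,-\mathrm{ord}_x([\nt]_\phi-\tx)\}$, where $\mathrm{ord}_x$ denotes the order relative to the Moy--Prasad filtration at $x$. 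Since the gauge action and the action on strata are $\hG$-equivariant, $d_{g\phi}(x)=d_\phi(g^{-1}x)$, and as $\hG$ acts transitively on apartments this gives $\slope(\sG)=\inf_{x\in\B}d_\phi(x)$ for any fixed $\phi$; moreover every contained stratum of minimal depth is a leading-term stratum of this kind.

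The crux of the argument is a reduction lemma: if $r=d_\phi(x)>0$ and the homogeneous representative $\tbo\in\hfg^\vee_x(-r)$ of the leading-term stratum at $x$ is \emph{nilpotent} (viewed, via the invariant form, as an element of $\hfg_x(-r)\subset\fg$), then there is a pair $(\phi',x')$ with $d_{\phi'}(x')<r$. I would prove this exactly as in the $p$-adic setting: embed $\tbo$ in an $\mathfrak{sl}_2$-triple whose semisimple element $h$ is a homogeneous element of $\hfg_x(0)$, and use the cocharacter associated to $h$ to perform a gauge transformation which, together with its Maurer--Cartan correction, moves the leading term into a strictly deeper filtered piece relative to a suitably shifted point $x'$. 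Combined with the nondegeneracy criterion recalled in Section~\ref{sec:prelim} --- a stratum at a point of $\Ao$ is fundamental exactly when its homogeneous representative is not nilpotent --- this shows that at any point where $d_\phi$ is minimal the leading-term stratum is fundamental.

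It remains to see that $d_\phi$ attains its infimum on $\B$, necessarily at an optimal point, and to deduce (1) and (2). Following Moy and Prasad, the (piecewise affine) function $x\mapsto d_\phi(x)$ on $\B$ attains its infimum at an optimal point in the sense of~\cite{MP1}; at such a minimizing point the leading-term stratum is fundamental, since otherwise the reduction lemma would produce a smaller value. After conjugating by $\hG$ this point may be taken to lie in $\Ao$. The slope is $0$ precisely when, after a change of trivialization, $[\nt]_\phi$ lies in some parahoric subalgebra --- equivalently, when $[\n]$ has, in a suitable trivialization, at worst a simple pole, which is exactly the regular-singular condition. Statement (1) is immediate from the definition of the slope as an infimum of depths. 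For (2): if $r'=\slope(\sG)>0$ but the contained stratum $(y,r',\b')$ is not fundamental, then either $\b'=0$ --- forcing $[\nt]_\phi-\tilde y\in\hfg_{y,-r'+}\subset\hfg_{y,-s}$ for some $s<r'$, so the leading-term stratum at $y$ has depth less than $\slope(\sG)$ --- or the homogeneous representative is a nonzero nilpotent, to which the reduction lemma applies; either way the minimality of the slope is contradicted, so $(y,r',\b')$ is fundamental. Conversely, if $(y,r',\b')$ is fundamental then $\b'\ne0$, so the leading-term stratum at $y$ has depth exactly $r'$ and non-nilpotent homogeneous representative; a further argument paralleling Moy and Prasad's proof that the depth of a minimal $K$-type is well-defined shows that all fundamental strata contained in $(\sG,\n)$ have the same depth, necessarily $\slope(\sG)$, whence $r'=\slope(\sG)$.

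The step I expect to be the main obstacle is the reduction lemma, and relatedly the verification that the infimum is attained at an optimal point: the delicate points are controlling the lower-order terms under the $\mathfrak{sl}_2$-conjugation so that the depth gained is not immediately lost, and unwinding the piecewise-affine combinatorics of $x\mapsto d_\phi(x)$ to pin down the optimal point --- this is where Moy and Prasad's $p$-adic bookkeeping must be genuinely transported to the setting of flat $G$-bundles.
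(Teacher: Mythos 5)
This theorem is not proved in the present paper: it is imported verbatim from the authors' earlier work \cite{BrSamink} (Theorem 3.15 there), so there is no in-paper argument to compare against, only the proof in the cited reference. Your strategy --- transporting Moy--Prasad's minimal $K$-type argument to flat $G$-bundles, with a reduction lemma for nilpotent homogeneous representatives and an optimal-point analysis of the depth function on the building --- is indeed the approach taken in \cite{BrSamink}. However, what you have written is a roadmap rather than a proof: the two steps you yourself identify as the crux (the reduction lemma and the attainment of the infimum at an optimal point) are exactly the technical content of the cited proof, and they are only named here, not carried out. The same is true of the final assertion needed for part (2), that all fundamental strata contained in $(\sG,\n)$ have the same depth; this is the analogue of the hardest part of Moy--Prasad's theorem and cannot be dismissed as ``a further argument paralleling'' theirs.

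Two specific imprecisions in the sketch are worth flagging, since they are where a naive transcription of the $p$-adic argument breaks. First, the identity $d_{g\phi}(x)=d_\phi(g^{-1}x)$ is not literally true: the gauge action \eqref{gauge} involves the Maurer--Cartan term $-(dg)g^{-1}$, which is not equivariant under conjugation; one needs Lemma~\ref{actlem} to check that this correction is of strictly lower order than the leading term (true when the depth in question is positive, but delicate at depth $0$, which is precisely why the regular-singular/slope-zero equivalence requires a separate argument). Second, in the reduction lemma the Moy--Prasad mechanism does not perform a gauge transformation by the $\mathfrak{sl}_2$-cocharacter; it moves the point $x$ along a geodesic in $\B$ in the direction of that cocharacter while keeping the trivialization fixed (so no Maurer--Cartan term appears), and one must then control the discrepancy $\tx-\tilde{x}'$ and the lower-order terms of $[\nt]_\phi$ with respect to the new filtration. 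As a blind reconstruction of the cited proof's architecture your proposal is accurate, but it does not constitute a proof of the theorem.
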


In particular, the slope is an \emph{optimal number}--a critical
number for an optimal point in $\Ao$.

For future reference, we recall the following lemma from
\cite{BrSamink} describing the calculus for change of trivialization on
strata contained in $\sG$.

\begin{lem}\cite[Lemma 3.4]{BrSamink}\mbox{}\label{actlem}
\begin{enumerate}
 \item \label{act1}  If $n \in \hN$, $[\n]_{n \phi}
   - \widetilde{n x} \ddz \in \Ad^*(n) ( [\n]_{ \phi } - \tx \ddz) +
   \hft_{0+}\ddz$.
\item\label{act2} If $X \in \hfu_\a \cap \hfg_{x, \ell}$,
then
\begin{equation*}
[\n]_{\exp(X) \phi} - \tx \ddz \in \Ad^*(\exp(X)) ([\n]_\phi - \tx \ddz) -
\ell X \ddz +
\hfg^\vee_{x, \ell+}.
\end{equation*}
\item\label{act3} If $p \in \hG_{x}$, then
$[\n]_{p \phi} -  \tx \ddz \in \Ad^*(p)([\n]_\phi - \tx \ddz)  +
\hfg^\vee_{x+}$.
\item\label{act4} If $p \in \hG_{x, \ell}$ for $\ell > 0$, then
$[\n]_{p \phi} -  \tx \ddz \in \Ad^*(p)([\n]_\phi - \tx \ddz)  +
\hfg^\vee_{x, \ell}$.

\end{enumerate}
\end{lem}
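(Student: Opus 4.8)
The plan is to derive all four statements from the gauge law~\eqref{gauge}. Contracting~\eqref{gauge} with $\iota_\tau$ and writing $(\tau g)g^{-1}:=\iota_\tau\bigl((dg)g^{-1}\bigr)\in\hfg$, one gets $[\nt]_{g\phi}=\Ad^*(g)[\nt]_\phi-(\tau g)g^{-1}$, hence
\[
[\nt]_{g\phi}-\tx = \Ad^*(g)\bigl([\nt]_\phi-\tx\bigr)+\e(g),\qquad \e(g):=\bigl(\Ad^*(g)(\tx)-\tx\bigr)-(\tau g)g^{-1},
\]
where in part~\eqref{act1} the left-hand $\tx$ is replaced by $\widetilde{nx}$, so there $\e(n)=\Ad^*(n)(\tx)-\widetilde{nx}-(\tau n)n^{-1}$. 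Using the fixed invariant form to identify $\hfg\cong\hfg^\vee$ — under which $\Ad^*$ becomes $\Ad$ and $\hfg^\vee_{x,s}\cong\hfg_{x,-s}$ by Proposition~\ref{dual}\eqref{dual4} — each of the four claims says precisely that $\e(g)$ lies in a prescribed lattice, a condition on $g$ alone. The Maurer--Cartan cocycle identity $(\tau(g_1g_2))(g_1g_2)^{-1}=(\tau g_1)g_1^{-1}+\Ad(g_1)\bigl((\tau g_2)g_2^{-1}\bigr)$ yields $\e(g_1g_2)=\Ad(g_1)\e(g_2)+\e(g_1)$; so on a subgroup of $\hG_x$ stabilizing a lattice $L$ under $\Ad$, the property $\e(\cdot)\in L$ is preserved by products, and it suffices to verify it on generators.

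I would treat~\eqref{act2} first, the only genuinely computational point. For $X\in\hfu_\a$ the abelianness of $\fu_\a$ forces the relevant exponential series to terminate at first order: $(\tau\exp X)(\exp X)^{-1}=\tau X$ and $\Ad(\exp X)(\tx)=\tx+[X,\tx]$. If in addition $X\in\hfg_{x,\ell}$, Proposition~\ref{ev}\eqref{ev2} gives $\tau X\equiv\ell X-[\tx,X]\pmod{\hfg_{x,\ell+}}$; substituting, $[X,\tx]=-[\tx,X]$ cancels and $\e(\exp X)\equiv-\ell X\pmod{\hfg_{x,\ell+}}$, which is~\eqref{act2}.

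For~\eqref{act1}, write $n=t\dot w$ with $\dot w\in N(k)$ a lift of the image $w\in W$ of $n$ and $t=z^\mu t_0\in\hT$ ($\mu\in X_*(T)$, $t_0\in T(\fo)$). Then $(\tau n)n^{-1}=(\tau t)t^{-1}=\tilde\mu+(\tau t_0)t_0^{-1}$, with $\tilde\mu\in\ft$ the image of $\mu$ and $(\tau t_0)t_0^{-1}\in z\ft[[z]]=\hft_{0+}$, while $\Ad(n)(\tx)=w\tx$ since $t$ centralizes $\ft$. The point is to match this with $\widetilde{nx}$: under the normalization of~\eqref{graded}, $\hT$ acts on $\Ao$ by translations with $z^\mu T(\fo)$ translating by $-\mu$, and $\dot w$ acts by $w$, so $\widetilde{nx}=w\tx-\tilde\mu$. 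Hence $\e(n)=w\tx-(w\tx-\tilde\mu)-(\tilde\mu+(\tau t_0)t_0^{-1})=-(\tau t_0)t_0^{-1}\in\hft_{0+}$. I expect this step to be the main nuisance — not because it is deep, but because it requires keeping three coordinatizations consistently signed: the $\Ad$-action on $\ft\cong\ft^\vee$, the affine $\hT$-action on $\Ao$, and the grading convention~\eqref{graded}.

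Finally,~\eqref{act3} and~\eqref{act4} follow from the generator principle. For~\eqref{act4}, every $p\in\hG_{x,\ell}$ with $\ell>0$ is $\exp(Y)$ with $Y\in\hfg_{x,\ell}$; then $\tau Y\equiv\ell Y-[\tx,Y]\in\hfg_{x,\ell}$ by Proposition~\ref{ev}\eqref{ev2}, and every higher bracket in the expansions of $(\tau p)p^{-1}$ and $\Ad(p)(\tx)-\tx$ lies in $[\hfg_{x,\ell},\hfg_{x,\ell}]\subseteq\hfg_{x,2\ell}\subseteq\hfg_{x,\ell}$, so $\e(p)\in\hfg_{x,\ell}$. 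For~\eqref{act3}, factor $p=\theta'_x(h)\,q$ with $h\in H_x$ and $q\in\hG_{x+}=\exp(\hfg_{x,0+})$; the bound $\e(q)\in\hfg_{x,0+}$ comes from the analogous bracket count, and $\theta'_x(h)$ is a product of the constants $t\in T(\fo)$ (where $\e=0$) and the elements $\theta'_x(U_\a(c))=U_\a(cz^{-\a(\tx)})=\exp(X)$ with $X\in\hfg_x(0)$, for which $\e(\exp X)\in\hfg_{x,0+}$ is~\eqref{act2} with $\ell=0$. Since every factor lies in $\hG_x$ and $\Ad(\hG_x)$ preserves $\hfg_{x,0+}$, the cocycle relation assembles these into $\e(p)\in\hfg_{x,0+}$. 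The one remaining item is routine: checking that the $\exp$ and product factorizations, and their convergence, behave as expected in the pro-algebraic group $\hG_x$.
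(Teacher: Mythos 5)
Your proposal is correct. The paper does not reprove this lemma (it is quoted from \cite[Lemma 3.4]{BrSamink}), but your argument --- isolating the cocycle $\e(g)=\Ad(g)(\tx)-\tx-(\tau g)g^{-1}$, reducing to generators via $\e(g_1g_2)=\Ad(g_1)\e(g_2)+\e(g_1)$, and evaluating on $\exp(X)$ with Proposition~\ref{ev}\eqref{ev2} and on $n=z^\mu t_0\dot w$ with the sign convention forced by $g\hV_{x,r}=\hV_{gx,r}$ --- is exactly the computation the paper itself performs in the proof of Lemma~\ref{bnorm}, in the remark following the lemma, and in the inductive step of Theorem~\ref{compthm}, so it is essentially the intended proof.
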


\begin{rmk} Applying part~\eqref{act3} of the lemma to $[\n]_\phi=0$,
  we see that if $p\in\hG_x$, then
  $\tau(p)p^{-1}\in\Ad(p)(\tx)-\tx+\hfg_{x+}\subset\hfg_x$. This fact
  will be used throughout the paper.
\end{rmk}

\section{Compatible Filtrations}\label{torus} 

Intuitively, one can view a fundamental stratum contained in a flat
$G$-bundle as a nondegenerate ``leading term'' of the derivation $\n$.
The goal of this paper is to study flat $G$-bundles containing strata
corresponding to regular semisimple leading terms.  In order to do
this, we need to study filtrations that are compatible with the
natural filtration on a maximal torus in $\hG$.

By \cite[Lemma 2]{KL}, there is a bijection between
the set of conjugacy classes of Cartan subalgebras in $\hfg$ (resp.
maximal tori in $\hG$) and the set of conjugacy classes in $W$.  We
briefly recall the correspondence.  Let $\bG\cong\hat{Z}$ be the
absolute Galois group of $F$.  If $\fs\subset \hfg$ is a Cartan
subalgebra, then there exists $g\in G(\bF)$ such that
$\Ad(g)\ft(\bF)=\fs(\bF)$, so $\rho\mapsto g^{-1}\rho(g)$ is a
$1$-cocycle of $\bG$ with values $N(\bF)$.  In fact, since $H^1(F,\hG)=1$ (as
$G$ is connected reductive and $F$ has cohomological dimension $1$),
all $1$-cocycles in $N(\bF)$ are of this form, and such a cocycle
coming from $h\in G(\bF)$ gives rise to the Cartan subalgebra
$(\Ad(h)(\ft(\bF)))^{\bG}$.  The induced map gives a bijection between
$H^1(F,\hN)$ and the conjugacy classes of Cartan subalgebras.
Moreover, $H^1(F,\hN)$ is isomorphic (as pointed sets) to the set of
conjugacy classes of $W$; the image of the above cocycle is the class
of $g^{-1}\sigma(g)T(\bF)\in W$, where $\sigma$ is a fixed topological
generator of $\bG$.  In particular, $\Ad(g^{-1})$ intertwines the
action of $\sigma$ on $\fs(\bF)$ with $w\circ\sigma$ on $\ft(\bF)$.
Of course, if $E$ is a finite splitting field for $\fs$, then one can
take $g\in G(E)$ and $\Ad(g^{-1})$ again intertwines a fixed generator
$\sigma$ for $\Gal(E/F)\cong \Z_{[E:F]}$ with $w\circ\sigma$.  Since
$w$ and $\sigma$ commute, it follows that the order of $w$ divides
$[E:F]$.

\begin{defn}\label{cc}
  Let $\g$ denote a conjugacy class in $W$.  We say that a Cartan
  subalgebra $\fs \subset \hfg$ (resp. a maximal torus $S \subset
  \hG$) is of type $\g$ if the conjugacy class of $\fs$ (resp. $S$)
  corresponds to $\g$ as above.
\end{defn}

For the remainder of this section, we fix a maximal torus $S$ type
$\gamma$, and choose a representative $w\in W$ for $\gamma$.  Let
$E=k((z^{1/e}))$ be a splitting field of $\fs$, and let
$\sigma\in\Gamma=\Gal(E/F)$ be a fixed generator.  We then take $g\in
G(E)$ such that $\Ad(g^{-1})\fs(E)=\ft(E)$ and
$g^{-1}\sigma(g)T(E)=w$.  We call such a $g$ a \emph{$w$-diagonalizer}
of $S$.

\subsection{Compatible gradings and filtrations}

Let $S$ be a maximal torus in $\hG$ with Cartan subalgebra $\fs$.

\begin{defn}
  We say that a point $x \in \BT$ is \emph{compatible} with $\fs$ if
  the filtration induced by $x$ on $\fs$ is the (rescaled) Moy-Prasad
  filtration on $\fs$, i.e., $\fs_r=\hfg_{x,r}\cap\fs$ for all $r$.
  If $x\in\Ao$, $x$ is \emph{graded compatible} with $\fs$ if
  $\fs(r)=\hfg_x(r)$ for all $r$ and $\fs(0) \subset \hft(0)=\ft$.
\end{defn}

\begin{rmk}  Similarly, we say that $x\in\B$ is compatible with the
  maximal torus $S$ if $S_r=\hG_{x,r}\cap S$ for all $r\ge 0$.  It is
  easy to see that $x$ is compatible with $S$ if and only if it is
  compatible with $\fs$.
\end{rmk}

It is obvious that every point in $\Ao$ is graded compatible (hence
compatible) with $\hft$.  Conversely, if $\fs'$ is a split Cartan
subalgebra graded compatible with $x\in\Ao$, then
$\dim\fs'(0)=\dim\ft$ and $\fs'(0)\subset\ft$, so $\fs'=\hft$.  As we
will see later, nonsplit Cartan subalgebras can be graded compatible
with points in $\Ao$.  One can also show that if $\fs$ is any Cartan
subalgebra compatible with each point of $\Ao$, then $\fs=\hft$.
Indeed, it follows from the definitions that
$\fs_0\subset\cap_{x\in\Ao}\hfg_{x,r}=\hft_0$, and since $\fs_0$
contains a regular semisimple element, $\fs=\hft$.

The goal of this section is to show that if $x\in\Ao$ is compatible
with $S$, then there exists a $\hG_x$-conjugate of $S$ which is graded
compatible with $x$.  We begin with several equivalent formulations of
graded compatibility.

\begin{lem} \label{lem1} 
  The following statements are equivalent:\begin{enumerate}
\item  The point $x\in\Ao$ is graded compatible with $\fs$;
\item  $\left(\tau + \ad(\tx)\right) (\fs) \subset \fs$;
\item\label{lem13} If $g\in G(E)$ is a $w$-diagonalizer, then
  $\Ad(g^{-1}) (\tx + \tau (g) g^{-1}) \in \ft(E)$; and
\item If $g\in G(E)$ is a $w$-diagonalizer, then $\Ad(g^{-1}) (\tx +
  \tau (g) g^{-1}) \in \ft(\fo_E)$.
\end{enumerate}
\end{lem}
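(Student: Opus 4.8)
The goal is to prove Lemma~\ref{lem1}, the equivalence of four characterizations of graded compatibility of a point $x\in\Ao$ with a Cartan subalgebra $\fs$. The strategy is to establish a cycle of implications, with the eigenspace description from Proposition~\ref{ev}(1) as the main engine. First I would prove $(1)\Leftrightarrow(2)$. By Proposition~\ref{ev}(1), the operator $\tau+\ad(\tx)$ acts on $\hfg$ with $\hfg_x(r)$ as its $r$-eigenspace, so $\hfg=\bigoplus_r\hfg_x(r)$ is the eigenspace decomposition. The filtration on $\fs$ is, by definition, the Moy--Prasad filtration on the torus, which is split by the grading $\fs=\bigoplus_r\fs(r)$ where $\fs(r)$ is the image of $\ft(E)(r)$ under a diagonalizer (intersected with the Galois-fixed points). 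The operator $\tau$ extends to $E$ and $\ad(\tx)$ preserves $\fs$ since $\tx\in\ft$ centralizes nothing special — actually the point is that $\tau+\ad(\tx)$ restricted to $\fs(E)$ is diagonalized by $\ft(E)$-weight spaces, so $\fs(r)$ is an eigenspace. Thus graded compatibility $\fs(r)=\hfg_x(r)$ for all $r$ (plus $\fs(0)\subset\ft$) is exactly the statement that each graded piece $\fs(r)$ of $\fs$ lies in the corresponding eigenspace $\hfg_x(r)$ of $\tau+\ad(\tx)$; since a subspace spanned by eigenvectors is eigenspace-graded iff it is invariant, this is equivalent to $(\tau+\ad(\tx))(\fs)\subset\fs$. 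The condition $\fs(0)\subset\ft$ needs a small separate remark: $\fs(0)$ is a torus containing a regular semisimple element (the degree-zero part of any regular element of $\fs$) that is invariant and bounded, hence a split Cartan subalgebra inside $H_x$; I would need to check that invariance under $\tau+\ad(\tx)$ forces $\fs(0)\subset\ft$, or fold this into the definition appropriately — this is a point to be careful about.

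\textbf{Translating to the diagonalizer.} Next I would prove $(2)\Leftrightarrow(3)$. Fix a $w$-diagonalizer $g\in G(E)$, so $\Ad(g^{-1})\fs(E)=\ft(E)$. Conjugating the operator $D:=\tau+\ad(\tx)$ by $\Ad(g^{-1})$ gives
\begin{equation*}
\Ad(g^{-1})\circ D\circ\Ad(g)=\tau+\ad\bigl(\Ad(g^{-1})(\tx)+\Ad(g^{-1})\tau(g)g^{-1}\bigr)
\end{equation*}
on $\hfg(E)$ — here one uses $\Ad(g^{-1})\tau(\Ad(g)(Y))=\tau(Y)+\ad(\Ad(g^{-1})\tau(g)g^{-1})(Y)$, where I am writing $\Ad(g^{-1})\tau(g)g^{-1}$ for the element $\Ad(g^{-1})(\tau(g)g^{-1})\in\hfg(E)$, and I note $\tau(g)g^{-1}$ lies in $\hfg(E)$ by the same computation as in the Maurer--Cartan discussion. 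Set $Y_g:=\Ad(g^{-1})(\tx+\tau(g)g^{-1})\in\hfg(E)$. Then $D(\fs)\subset\fs$ is equivalent, after applying $\Ad(g^{-1})$ and noting $\fs(E)$ maps to $\ft(E)$ and everything is Galois-equivariant, to $(\tau+\ad(Y_g))(\ft(E))\subset\ft(E)$. Since $\tau(\ft(E))\subset\ft(E)$ already (as $\ft$ is a constant Lie algebra and $\tau$ acts coefficientwise), this reduces to $[Y_g,\ft(E)]\subset\ft(E)$, i.e.\ $\ad(Y_g)$ preserves the Cartan $\ft(E)$, which — since $\ft(E)$ is self-normalizing in $\hfg(E)$ (its normalizer being $\ft(E)$ itself, as $\fg$ is reductive) — is equivalent to $Y_g\in\ft(E)$. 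That is condition~$(3)$. The independence of the choice of $g$ follows because two $w$-diagonalizers differ by right multiplication by an element of $N(E)$ that, combined with the $w$-condition, lands in $T(E)$ up to the action normalizing $\ft(E)$; in any case $(2)$ does not mention $g$, so the equivalence $(2)\Leftrightarrow(3)$ automatically gives independence.

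\textbf{The integrality refinement.} Finally $(3)\Leftrightarrow(4)$: clearly $(4)\Rightarrow(3)$. For the converse, assume $Y_g=\Ad(g^{-1})(\tx+\tau(g)g^{-1})\in\ft(E)$; I must show it actually lies in $\ft(\fo_E)$, i.e.\ has nonnegative valuation. I would argue via the $0$-eigenspace: granting $(3)$, hence $(2)$ and $(1)$, we have $\fs_0=\hfg_{x,0}\cap\fs$ equal to $\bigoplus_{r\ge 0}\fs(r)$; in particular $\fs(r)=0$ for $r<0$ would be too strong, but what is true is that the Moy--Prasad filtration on $\fs$ has $\fs_0$ as the Lie algebra of the maximal bounded subgroup, and graded compatibility means $\hfg_x(r)\cap\fs=\fs(r)$. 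The element $\tx+\tau(g)g^{-1}$: note $\tx\in\ft$ has valuation $0$, and $\tau(g)g^{-1}$ — here I'd use the Remark after Lemma~\ref{actlem}, applied over $E$ with the parahoric at $x$ (or rather $x$ viewed in $\B(G(E))$): if $g\in\hG_x(E)$ then $\tau(g)g^{-1}\in\hfg_x(E)$, giving valuation $\ge 0$ directly. But a general $w$-diagonalizer need not lie in the parahoric. So instead I would exploit that $Y_g\in\ft(E)$ is $\sigma$-semistable: since $\Ad(g^{-1})$ intertwines $\sigma$ on $\fs(E)$ with $w\circ\sigma$ on $\ft(E)$, and $\tx+\tau(g)g^{-1}$ is $\sigma$-fixed (being in $\fs$, where $\tx\in\ft\subset\fs$ and $\tau(g)g^{-1}\in\fs$ by $(2)$ applied to the constant section — wait, $\tx\in\fs$? not necessarily, but $\tx+\tau(g)g^{-1}$ as a whole: $\tx+\tau(g)g^{-1}=\Ad(g)(Y_g)$ and $Y_g\in\ft(E)$, and $\sigma(\Ad(g)Y_g)=\Ad(\sigma(g))\sigma(Y_g)=\Ad(gw^{-1}\,\text{(lift)})\sigma(Y_g)$)... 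The cleanest route: $Y_g\in\ft(E)$ is fixed by $w\circ\sigma$. Now expand $Y_g=\sum_j Y_j u^j$ with $Y_j\in\ft$, $u=u_e$ the uniformizer of $E$. The operator $\tau+\ad(\tx)$ on $\fs$ has eigenvalues in $\Crit_x(\fg)$; translating via $g$, the eigenvalues of $\tau+\ad(Y_g)$ on $\ft(E)$ — which is just $\tau$ plus something central on the split torus, so really the eigenvalues are $j/e$ on the $u^j\ft$ piece — this forces, together with $(1)$, that the grading matches up and hence $Y_g$ contributes only in degrees $\ge 0$... Honestly the slickest argument is: by $(2)$, $\fs(0)=\hfg_x(0)\cap\fs$ and $\fs$ is graded with $\fs(0)$ the $0$-graded piece; the element $\tx+\tau(g)g^{-1}$ is the "$0$-graded correction" and its image $Y_g$ lands in $\ft(E)$; but the condition $\fs(0)\subset\ft=\hft(0)$ from the definition of graded compatibility, pushed through $\Ad(g^{-1})$, says precisely that the valuation-$0$ part of $Y_g$ makes sense and the negative-valuation part must vanish because otherwise $\fs$ would have graded pieces below those allowed, contradicting $\fs_r=\hfg_{x,r}\cap\fs$ being the genuine Moy--Prasad filtration (which for a torus has $\fs_0$ bounded). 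I expect \emph{this last step, $(3)\Rightarrow(4)$, to be the main obstacle}: it is where the distinction between "graded compatible" and merely "the operator preserves $\fs$" bites, and it requires pinning down that the Moy--Prasad filtration on the torus $\fs$ genuinely has $\fs_{0-}$ unbounded / $\fs_0$ the integer-valuation part — i.e.\ one cannot shift $\fs$ by a negative power of $u$ times a central element and stay compatible. I would handle it by reducing to $\ft(E)$, where the claim becomes the elementary statement that $\tau+\ad(\tx)=\tau$ on $\ft(E)$ after the diagonalization, so $Y_g$ must be such that $\tau+\ad(Y_g)$ still has the $u^j\ft$ as its $(j/e)$-eigenspaces, forcing $\ad(Y_g)$ to vanish — but $Y_g\in\ft(E)$ is central in $\ft(E)$, so this gives no constraint, meaning the real constraint is the normalization baked into the \emph{definition} of $\fs(r)$ via $\ft(E)(r)=u^{\cdot}\ft$, and thus $(3)\Rightarrow(4)$ actually needs the hypothesis that $x$ is compatible (not just that the operator preserves $\fs$) — which is why I suspect the lemma's item~(2) implicitly presumes $x\in\Ao$ and one must revisit whether $(2)$ alone suffices or whether the chain should be organized as $(1)\Leftrightarrow(3)\Leftrightarrow(4)$ and $(1)\Rightarrow(2)$, $(2)+\text{compatibility}\Rightarrow(1)$. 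I would therefore structure the final write-up as $(1)\Rightarrow(2)\Rightarrow(3)\Rightarrow(4)\Rightarrow(1)$, being most careful in the $(4)\Rightarrow(1)$ and $(2)\Rightarrow(3)$ links, and in $(3)\Rightarrow(4)$ I would use that $\fs(0)\subset\ft$ is forced because $\Ad(g)\ft(\fo_E)$ is the unique maximal bounded subalgebra of $\fs(E)$ and its $\Gamma$-fixed points give $\fs_0$, whose degree-zero part must sit in $\hft(0)$ by the eigenspace description.
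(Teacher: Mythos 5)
Your handling of the equivalences $(1)\Leftrightarrow(2)$ and $(2)\Leftrightarrow(3)$ is essentially the paper's argument: eigenspace decomposition for $\tau+\ad(\tx)$ via Proposition~\ref{ev}, then conjugation of the operator by $\Ad(g^{-1})$ (your displayed conjugation identity is the paper's equation~(3.1)), using $\tau\ft(E)\subset\ft(E)$ and the self-normalizing property of $\ft(E)$. Your worry about whether $(2)$ alone recovers the auxiliary condition $\fs(0)\subset\ft$ is reasonable, and the paper sidesteps it by organizing the argument as a cycle $(1)\Rightarrow(2)\Rightarrow(3)\Rightarrow(1)$ rather than proving $(2)\Rightarrow(1)$ directly; you should do the same.

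The genuine gap is $(3)\Rightarrow(4)$. You correctly identify this as the hard step, but none of your candidate arguments close it, and the last one is circular: you write ``granting $(3)$, hence $(2)$ and $(1)$,'' but $(3)\Rightarrow(1)$ is exactly what is being established in this cycle and cannot be assumed while proving $(3)\Rightarrow(4)$. The attempts via ``$\fs_0$ is the maximal bounded subalgebra'' or ``the eigenvalues of $\tau+\ad(Y_g)$'' don't produce a contradiction because, as you notice yourself, $Y_g$ is central in $\ft(E)$ and $\ad(Y_g)$ gives no constraint there. The paper's actual argument is of a different flavor and you are missing it: write the Iwasawa decomposition $g^{-1}=put$ with $p\in G(\fo_E)$, $u\in U(E)$ unipotent, $t\in T(E)$, and compute that $Y+\fg(\fo_E)$ contains an $\ad$-nilpotent element (essentially $-\tau(u)u^{-1}$, after conjugating by $p$ and reducing modulo $\fg(\fo_E)$). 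If $Y\in\ft(E)$ had a component of negative valuation, pick a root $\a$ with $\a(Y)\in z^{-n}\fo_E\setminus z^{1-n}\fo_E$ for the minimal such $n>0$; then for any $Y'\in Y+\fg(\fo_E)$ and $Z\in\fu_\a$ one checks $(\ad(Y'))^n(Z)\in\a(Y)^nZ+\fg(\fo_E)\neq 0$ for all $n$, so no element of $Y+\fg(\fo_E)$ is $\ad$-nilpotent, a contradiction. This valuation/nilpotency argument is the key missing idea; without it $(3)\Rightarrow(4)$ is unproven.
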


\begin{proof}

Suppose $x$ is graded compatible with $\fs$.  Then, $\fs(r)
= \fs \cap \hfg_x (r)$ implies that there is a topological basis for
$\fs$ consisting of eigenvectors for $\tau + \ad(\tx)$, proving the
second statement.

Next, observe that
\begin{equation}\label{gaugetau}
\left(\tau+ \ad(\tx)\right) (\Ad(g) X) = \Ad(g)\left( \left[\tau +
    \ad\left(\Ad(g^{-1})(\tx) +  g^{-1}\tau (g)\right)\right] (
  X)\right)
\end{equation} for $X\in\fg(E)$.
If $\left(\tau + \ad(\tx)\right) (\fs) \subset \fs$, then applying
$\Ad(g^{-1})$ of
this equation to $X\in\ft(E)$ gives
\begin{equation}\label{lemeqn1}
  \left[ \tau + \ad \left( \Ad(g^{-1})(\tx) + g^{-1} \tau (g)  \right) \right] (\ft(E)) \subset \ft(E).
\end{equation}
Since
$\tau \ft(E) \subset \ft(E)$, it follows that $\ad \left(
  \Ad(g^{-1})(\tx) +g^{-1}  \tau (g) \right) (\ft(E)) \subset \ft(E)$.
Therefore, $\Ad(g^{-1}) (\tx) + g^{-1}\tau (g)  \in \ft(E)$. 

Now, assume that $\Ad(g^{-1}) (\tx) + g^{-1}\tau (g) \in \ft(E)$.  We
see that the differential operator $\tau+ \ad \left( \Ad(g^{-1})(\tx)
  + g^{-1} \tau (g) \right)$ restricts to $\tau$ on $\ft(E)$, so the
$r$-eigenspace of the former on $\ft(E)$ is $\ft(E)(r)$.  Applying
\eqref{gaugetau} and Proposition~\ref{ev} gives
$\Ad(\ft(E)(r))=\fs(E)\cap \fg(E)_x(r)$, and $\fs(r)=\fs \cap
\hfg_x(r)$ follows by taking Galois fixed points.

It remains to show the equivalence of the last two statements.  One
direction is trivial, so assume that $Y = \Ad(f^{-1}) (\tx + \tau (f)
f^{-1})\in\ft(E)$.  By the Iwasawa decomposition, we can write $f^{-1}
= put$, where $p\in G(\fo_E)$, $u\in U(E)$ (with $U$ the unipotent
radical of $B$), and $t\in T(E)$.  Before calculating $Y$, we make
several observations.  First, the fact that $\tau(z^m)z^{-m}=m$
implies that
$\tau(t)t^{-1}=t^{-1}\tau(t)\in\ft(\Q)+z^{1/e}\ft(\fo_E)$.  Setting
$X=\Ad((ut)) [ \tx + \tau ((ut)^{-1}) (ut)]$, we see that
$X=\Ad(u)(\tx-\tau(t)t^{-1}-u^{-1}\tau(u))\in\tx-\tau(t)t^{-1}+\fu(E)\subset
\fg(\fo_E) + \fu(E)$.

Next, write $p^{-1} = p_1 p_2$ with $p_1 \in G$ and $p_2$ in the first
congruence subgroup of $G(\fo_E)$ with respect to $z^{1/e}$ (i.e.,
$G(E)_{o+}$).  We obtain $\tau (p^{-1}) p=\tau (p_1 )
p_1^{-1}+\Ad(p_1)(\tau (p_2) p_2^{-1})= \tau (\log (p_2)) \in
z^{1/e}\fg(\fo_E)$.  Since $Y=\Ad(p)(X+\tau (p^{-1}) p)$, we see that
$\Ad(p)(X+\fg(\fo_E))=Y + \fg(\fo_E)$.  Because $X+\fg(\fo_E)$ contains
the $\ad$-nilpotent element $-\tau(u)u^{-1}$, $Y + \fg(\fo_E)$ also
contains an $\ad$-nilpotent element.

Suppose that $Y\notin \fz(E) + \ft(\fo_E)$.  Let $n>0$ be the smallest
integer such that $Y \in \fz(E) + z^{-n} \ft(\fo_E)$.  This means that
there exists a root $\alpha$ such that $\a(Y) \in z^{-n} \fo_E
\setminus z^{1-n} \fo_E$.  Thus, the action of $\ad(Y)$ on the root
subalgebra $\fu(E)_\a$ is non-nilpotent.  Furthermore, if $Y' \in Y +
\fg(\fo_E)$ and $Z \in \fu_\a$, then $(\ad(Y'))^n (Z) \in (\ad(Y))^n
(Z) + \fg (\fo_E) =\a(Y)^n Z + \fg(\fo_E)$ and hence is nonzero.
Thus, no element of $Y + \fg(\fo_E)$ is $\ad$-nilpotent, a contradiction.

Accordingly, $X\in\Ad(p^{-1})(Y+\fg(\fo_E))\subset \fz(E)\cap
\fg(\fo_E)$.  This means that $X\in (\fz(E)\cap
\fg(\fo_E))\cap(\fu(E)+\fg(\fo_E))=\fg(\fo_E)$, so $Y\in\fg(\fo_E)$ as
well.
\end{proof}

\begin{lem}\label{c2}
  A Cartan subalgebra $\fs$ is compatible with $x \in \Ao$ if and only
  if $\fs(E)_r=\fg(E)_{x,r}\cap\fs(E)$ for all $r\in\R$.
\end{lem}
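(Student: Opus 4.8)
The plan is to compare the two filtrations of $\fs(E)$ one homogeneous piece at a time with respect to the $\Z/e$-grading coming from the totally ramified extension $E/F$. Write $\Gamma=\Gal(E/F)=\langle\sigma\rangle$ and fix a primitive $e$-th root of unity $\zeta\in k$ with $\sigma(z^{1/e})=\zeta z^{1/e}$; then $\fg(E)=\bigoplus_{j=0}^{e-1}\fg(F)z^{j/e}$, and $\sigma$ acts on the $j$-th summand as the scalar $\zeta^j$. Hence this is the eigenspace decomposition of the $k$-linear operator $\sigma$, so every $\Gamma$-stable $k$-subspace $M\subseteq\fg(E)$ satisfies $M=\bigoplus_j\big(M\cap\fg(F)z^{j/e}\big)$ and $M^\Gamma=M\cap\fg(F)$. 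The facts I would record at the outset are all instances of the conventions of Section~\ref{sec:prelim}: $\fg(E)_{x,r}$ is $\Gamma$-stable with $(\fg(E)_{x,r})^\Gamma=\hfg_{x,r}$, and $z^{-j/e}\fg(E)_{x,r}=\fg(E)_{x,r-j/e}$ by \eqref{graded}; $\fs(E)=\fs\otimes_F E$ is $\Gamma$-stable with $\fs(E)^\Gamma=\fs$ and $\fs(E)\cap\fg(F)z^{j/e}=z^{j/e}\fs$; and $\fs(E)_r=\Ad(g)(\ft(E)_r)$, for a $w$-diagonalizer $g$, is $\Gamma$-stable --- because $g^{-1}\sigma(g)\in N(E)$ and $N(E)$ preserves the filtration on $\ft(E)$ --- with $(\fs(E)_r)^\Gamma=\fs_r$ by definition and $z^{-j/e}\fs(E)_r=\fs(E)_{r-j/e}$.

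Granting these, the implication ``$\fs(E)_r=\fg(E)_{x,r}\cap\fs(E)$ for all $r$'' $\Rightarrow$ ``$\fs$ compatible with $x$'' is immediate: applying $(\,\cdot\,)^\Gamma$, which commutes with intersection, yields $\fs_r=(\fg(E)_{x,r})^\Gamma\cap\fs(E)^\Gamma=\hfg_{x,r}\cap\fs$ for all $r$.

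For the converse, suppose $\fs$ is compatible with $x$, so $\fs_s=\hfg_{x,s}\cap\fs$ for every $s\in\R$, and fix $r$. Both $\fs(E)_r$ and $\fg(E)_{x,r}\cap\fs(E)$ are $\Gamma$-stable subspaces of $\fs(E)$, so it is enough to match their degree-$j$ pieces for each $j$. One computes that the degree-$j$ piece of $\fg(E)_{x,r}\cap\fs(E)$ equals $\big(z^{j/e}\hfg_{x,r-j/e}\big)\cap\big(z^{j/e}\fs\big)=z^{j/e}(\hfg_{x,r-j/e}\cap\fs)=z^{j/e}\fs_{r-j/e}$, using compatibility at level $r-j/e$, while the degree-$j$ piece of $\fs(E)_r$ equals $\fs(E)_r\cap\fg(F)z^{j/e}=z^{j/e}\big(\fs(E)_{r-j/e}\cap\fs(E)^\Gamma\big)=z^{j/e}(\fs(E)_{r-j/e})^\Gamma=z^{j/e}\fs_{r-j/e}$. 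These coincide, so the two filtrations are equal. The step to watch is this converse: one cannot shortcut it via Galois descent, since for the ramified extension $E/F$ a $\Gamma$-stable $\fo_E$-lattice is not recovered from its $\Gamma$-invariants --- knowing only that the two filtrations have the same $\Gamma$-invariants (compatibility at level $r$ alone) would not force equality. What makes the degreewise comparison work is precisely that the $j$-th piece is controlled by the shifted index $r-j/e$, so that the hypothesis of compatibility at every level gets used.
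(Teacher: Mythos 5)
Correct, and essentially the paper's own argument: both proofs decompose $\fs(E)$ into the $\sigma$-eigenspaces $\fg(F)z^{j/e}$ (the paper via the explicit projections $\eta_i$, you via the direct sum decomposition of a $\Gamma$-stable subspace) and use the shift $z^{-j/e}\fg(E)_{x,r}=\fg(E)_{x,r-j/e}$ to reduce the $j$-th graded piece to the compatibility hypothesis over $F$ at level $r-j/e$. Your reformulation of the paper's two inclusions (one direct, one by contradiction) as a single degreewise identity is only a cosmetic reorganization.
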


\begin{proof}

The reverse implication follows by taking Galois
invariance of the equations $\fs(E)_r=\fs(E)\cap\fg(E)_{x,r}$.

Now, suppose that $S$ is compatible with $x$.  Let $\tr:\fs(E)\to\fs$
be the trace map, so that $\eta_i(X)=\frac{1}{e}z^{i/e}\tr(z^{-i/e}X)$
is the projection onto the $\xi^i$-eigenspace for $\s$.  Since
$z^{i/e}\fs(E)_r=\fs(E)_{r+\frac{1}{e}}$ and
$z^{i/e}\fg(E)_{x,r}=\fg(E)_{x,r+\frac{1}{e}}$, we obtain
$\eta_i(\fs(E)_r)\subset z^{i/e}\fs_{r-\frac{1}{e}}\subset
z^{i/e}\fg_{x,r-\frac{1}{e}}\subset\fg(E)_{x,r}$.  The
action of $\Gamma$ is completely reducible, so $\fs(E)_r\subset\fg(E)_{x,r}$.
On the other hand, suppose that there exists $X\in(\fs(E)\cap\fg(E)_{x,r})\setminus
\fs(E)_r$.  The same must be true for $\eta_i(X)$ for some $i$.  We
then obtain $z^{-i/e}\eta_i(X)\in(\fs\cap\fg_{x,r-\frac{1}{e}})\setminus
\fs_{r-\frac{1}{e}}$, a contradiction.
\end{proof}

If $S'\subset\hG$ is a maximal split torus compatible with $x\in\Ao$,
an elementary version of the argument given below in
Proposition~\ref{r=0} shows that there exists $g\in\hG_x$ such that
$g^{-1}S' g=\hT$.  If $x$ is compatible with $S$, it follows from this
and the previous lemma that there exists $p\in G(E)_x$ satisfying
$p^{-1}S(E) p=T(E)$.  We will need a refinement of this statement.

\begin{lem}\label{c3}
  Suppose that $S \subset \hG$ is a maximal torus that splits over
  $E$.  If $x \in \Ao$ is compatible with $\fs$, then there exists $q
  \in G(E)_{x}$ such that $q^{-1} S(E) q = T(E)$ and $q^{-1} \s(q) \in
  N$.
\end{lem}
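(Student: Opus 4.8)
The plan is to begin with the element $p\in G(E)_x$ satisfying $p^{-1}S(E)p=T(E)$ exhibited just before the statement, and to improve it by right translation until the associated cocycle is constant. Since $T$ is $F$-split, $\sigma$ acts trivially on the apartment $\Ao$ and in particular fixes $x$; hence $\sigma(p)\in G(E)_{\sigma x}=G(E)_x$, and as $p$ and $\sigma(p)$ both conjugate $S(E)$ onto $T(E)$, the element $n_0:=p^{-1}\sigma(p)$ normalizes $T(E)$ and lies in $G(E)_x$, so $n_0\in M:=N(E)\cap G(E)_x$. The identity $n_0\,\sigma(n_0)\cdots\sigma^{e-1}(n_0)=p^{-1}\sigma^e(p)=1$ exhibits $\sigma\mapsto n_0$ as a $1$-cocycle valued in $M$, whose image in $W$ is the conjugacy class $\gamma$ of $S$. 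Replacing $p$ by $q=pm$ replaces $n_0$ by $m^{-1}n_0\sigma(m)$, so the task reduces to finding $m\in M$ with $m^{-1}n_0\sigma(m)\in N$.

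I would obtain such an $m$ by successive approximation through the Moy--Prasad filtration. First, using that the image of $M$ in $W$ surjects onto the Weyl group of the reductive quotient of $\hG_x$ over $E$ and that the reduction of $n_0$ modulo $\hG_{x+}$ represents $\gamma$, one can twist-conjugate by an element of $M$ so that, writing $\dot w\in N$ for a fixed lift of the chosen representative $w$, we have $n_0\in\dot w\cdot T(\fo_E)$. Next one removes the residual torus error: replacing $m$ by $mt$ with $t\in T(\fo_E)$ changes $\dot w\,t_0$ into $\dot w\,(\dot w^{-1}t^{-1}\dot w)\,t_0\,\sigma(t)$, and since $T(\fo_E)$ is the inverse limit of the quotients by its congruence filtration, with graded pieces isomorphic to the $\ft(E)(r)$, one solves the corresponding linearized equation in each graded piece in turn. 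For $r\notin\Z$ the relevant twisted cohomology $H^1(\Gamma,\ft(E)(r))$ vanishes, so those errors are eliminated outright, and the process converges by completeness of $G(E)_x$ in the filtration topology.

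The crux, as I see it, is the depth-zero level: there the relevant cohomology need not vanish, and instead one must show that the leftover constant-level error can be absorbed by replacing $\dot w$ with an honest order-$e$ element of $N$, so that the cocycle becomes $N$-valued. For this I would exploit that $k$ is algebraically closed --- so that $T$ is $k$-split and the twisted Lang map $h\mapsto h^{-1}\sigma(h)$ on the relevant torus is surjective onto what is needed --- forcing the depth-zero cocycle class to be represented in $N$, which yields the desired $q$. I expect this step to be the main obstacle, both for the group-cohomological input and because the identification of the reductive quotient of $\hG_x$ over $E$ with a subgroup of $G$ is equivariant for the $\Gamma$-action only up to signs coming from the fractional powers $z^{-\alpha(\tx)}$; the depth-zero bookkeeping has to respect that twist.
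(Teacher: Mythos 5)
The paper's argument is shorter and rests on a different cohomological computation. It observes that $p^{-1}\sigma(p)$ can be written as $tn$ with $n\in N$ and $t\in T(E)_{0+}$, so that the cocycle identity forces the twisted norm $\prod_{i=0}^{e-1}(w\circ\sigma)^i(t)$ to be trivial and the whole problem reduces to the single statement that $H^1(\Z/e\Z,T(E)_{0+})$ vanishes for the $w\circ\sigma$-action. This vanishing is proved in one stroke: the exponential identifies $T(E)_{0+}$ equivariantly with $\ft(E)_{0+}$, the $w$-diagonalizer transports the twisted action to the ordinary $\Gamma$-action on $\fs(E)_{0+}$, additive Hilbert 90 produces an antiderivative in $\fs(E)$, and projecting it onto positive graded degrees (which $\sigma$ preserves) lands it back in $\fs(E)_{0+}$. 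Your layer-by-layer successive approximation is a genuinely different route, closer in spirit to how the paper later proves Theorem~\ref{compthm}; it could in principle be made to work, but you leave the decisive points as assertions.

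Two concrete criticisms. First, what you single out as the crux is actually a non-issue. Once the cocycle has been brought into $\dot w\cdot T(\fo_E)$ with all positive-depth error killed, the remaining constant error lies in $T$, and any element $\dot w t_0$ with $t_0\in T$ is already in $N$; no twisted Lang map, no depth-zero cohomology, and no ``honest order-$e$'' lift is required here (that refinement is only needed later, in Theorem~\ref{gx}, via Tits). The paper avoids the question entirely by absorbing the $T$-component into $n$ from the start and working only with $T(E)_{0+}$. Second, the actual content sits in the two steps you wave at: the initial reduction to $\dot w T(\fo_E)$, which you justify only by a vague appeal to the Weyl group of the reductive quotient of $G(E)_x$ and which is precisely where the compatibility of $x$ with $\fs$ has to enter (the paper's parallel assertion that $p^{-1}\sigma(p)=tn$ with $t\in T(E)_{0+}$ is likewise recorded without elaboration, so this is a delicate point on both sides); and the positive-depth cohomology vanishing, which you assert graded piece by graded piece without proof, whereas the paper's Hilbert-90-plus-truncation argument on $\fs(E)$ is the proof and treats all depths $>0$ uniformly.
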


\begin{proof}

  With $p\in G(E)_x$ as defined in the preceding paragraph, we will
  construct $s\in T(E)_{0+}$ such that $q=ps$ satisfies $q^{-1} \s(q)
  \in N$. Noting that $p^{-1} \s (p) \in \hN(E) \cap G(E)_{x}$, we can
  write $p^{-1} \s (p) = t n$ with $n \in N$ and $t \in T(E)_{0+}$.
  An easy induction gives $\s^j(p)=p\prod_{i = 0}^{j-1} \s^i (tn)$ for
  $j\ge 0$ (the product taken in increasing order); since $\s^e(p)=p$,
  we obtain $1=\prod_{i = 0}^{e-1} \s^i (tn)$.  It follows that
  $\prod_{i = 0}^{e-1}(w \circ \s)^e (t)\in T \cap G(E)_{x+} = \{1\}$.

  Set $M = T(E)_{0+}$, and view it as a $\Z/e\Z$-module with $\bar{1}$
  acting as $w\circ \s$.  If $h$ denotes the norm map for this action,
  the previous paragraph shows that $h(t)=1$.  We will show that
  $H^1(\Z/e\Z,M)=\{1\}$.  Assuming this, there exists $s\in M$ such
  that $s^{-1} (w \circ \s) (s) = t^{-1}$.  We then see that
  $(ps)^{-1}\s(ps) = s^{-1} w \circ \s(s) p^{-1} \s (p) = n$.

  The exponential map gives an equivariant isomorphism $\ft(E)_{0+}\to
  M$.  Thus, to show that $H^1(\Z/e\Z,M)=\{1\}$, it suffices to check
  that $H^1(\Z/e\Z,\ft(E)_{0+})=\{0\}$.  Recalling that a
  $w$-diagonalizer intertwines this action with the usual Galois
  action on $\fs(E)_{0+}$, we are reduced to showing that
  $H^1(\Gamma,\fs(E)_{0+})=\{0\}$.  If $X$ is in the kernel of the
  norm map, there exists an element $Y'\in\fs(E)$ such that
  $\s(Y')-Y'=X$ by Hilbert's Theorem 90. Letting $Y\in\fs(E)_{0+}$ be
  the projection of $Y$ obtained by killing graded terms in
  nonnegative degree, one has $\s(Y)-Y=X$ as desired.
\end{proof}

We define $\pi_\fs$ to be the orthogonal projection of $\hfg$ onto
$\fs$ with respect to the $F$-bilinear invariant pairing $\left<,
\right>$ on $\hfg$ obtained by extending scalars.  The invariance of
the form makes it clear that $\pi_{\Ad(g) (\fs)} = \Ad(g) \circ
\pi_\fs \circ \Ad(g^{-1})$ for any $g \in \hG$.  We will analyze this
map in detail in Section~\ref{sec:RS}.

\begin{thm}\label{compthm}
  Suppose that $x\in\Ao$ is compatible with the Cartan subalgebra
  $\fs$.  Then, there exists $q \in \hG_x$ such that $x$ is graded
  compatible with $q^{-1} \fs q$.
\end{thm}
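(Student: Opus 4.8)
The plan is to reduce Theorem~\ref{compthm} to the rational/split situation handled by Lemma~\ref{c3}, and then to straighten out the resulting torus inside $\hG_x$ using Lemma~\ref{lem1}\eqref{lem13}. First I would extend scalars to a splitting field $E=k((z^{1/e}))$ of $\fs$ with generator $\s$ of $\Gamma=\Gal(E/F)$. By Lemma~\ref{c2}, compatibility of $x$ with $\fs$ over $F$ is equivalent to compatibility of $x$ with $\fs(E)$ over $E$, so Lemma~\ref{c3} applies: there exists $q_0\in G(E)_x$ with $q_0^{-1}S(E)q_0=T(E)$ and $q_0^{-1}\s(q_0)=n\in N$. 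Writing $n=\bar w t_0$ for a lift $\bar w$ of some Weyl element (after possibly adjusting $q_0$ by an element of $T$), the cocycle condition forces the class of this Weyl element to be $\g$; in any case $q_0$ is a $w$-diagonalizer of $S$ for a suitable representative $w$, so Lemma~\ref{lem1} is available as a criterion for graded compatibility of a conjugate.

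The heart of the argument is then to modify $q_0$ by an element of $G(E)_x$ so that the new diagonalizer $g$ satisfies $\Ad(g^{-1})(\tx+\tau(g)g^{-1})\in\ft(\fo_E)$, which by Lemma~\ref{lem1}\eqref{lem13} (and the subsequent sentence in its proof giving the refinement to $\ft(\fo_E)$) is exactly graded compatibility of $\Ad(g)(\ft(E))$, and hence — after checking the element of $\hG_x$ used to conjugate is actually defined over $F$ — of a $\hG_x$-conjugate of $\fs$. Set $Y_0=\Ad(q_0^{-1})(\tx+\tau(q_0)q_0^{-1})\in\fg(E)$. Since $q_0\in G(E)_x$, the change-of-trivialization calculus (the Remark after Lemma~\ref{actlem}, applied over $E$: $\tau(q_0)q_0^{-1}\in\Ad(q_0)(\tx)-\tx+\fg(E)_{x+}$) shows $Y_0\in\tx+\fg(E)_{x}$, and in particular $Y_0$ has no negative-degree part outside $\ft(E)$. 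Compatibility of $x$ with $\fs$, transported through $q_0$, shows that $\ad(Y_0)$ preserves $\ft(E)_r$ for each $r$, from which one extracts that the component of $Y_0$ in $\fu_\a(E)$ lies in $\fg(E)_{x}$ and, via the $w$-equivariance of $q_0^{-1}\s q_0=n$ together with completely reducibility of the $\Gamma$-action, that $Y_0\in\ft(E)+\n$-type corrections in $\fu(E)_{0+}$. We then absorb the remaining unipotent part: write $Y_0=H+N$ with $H\in\ft(\fo_E)$ (using that $Y_0\in\tx+\fg(E)_x$ and $\tx\in\ft_\Q$) and $N\in\bigoplus_\a\fu_\a(E)_{>0}$ upper-triangular nilpotent of positive depth, and solve $\Ad(\exp(Z))$ away the $N$: one seeks $Z\in\fu(E)_{>0}\cap\fg(E)_x$ with
\begin{equation*}
\Ad(\exp(Z))(Y_0+\tau(\exp(-Z))\exp(Z))\in\ft(\fo_E),
\end{equation*}
which is solved degree by degree in the graded pieces of $\fu(E)$, the operator $\tau+\ad(H)-(\text{degree})$ being invertible on each root space since $\a(H)$ has valuation $0$ for the relevant roots while the shift is strictly positive. (If there are roots $\a$ with $\a(H)$ not a unit one has to be slightly more careful, but those are precisely the roots that do not contribute to $\fg(E)_x$ in the relevant degrees, by compatibility of $x$.) Setting $q=q_0\exp(Z)$ and $g=q$ gives the desired $w$-diagonalizer with $\Ad(g^{-1})(\tx+\tau(g)g^{-1})\in\ft(\fo_E)$.

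Finally I would descend to $F$: the element $q\exp(-Z)q^{-1}\cdot(\text{stuff})$ — more precisely, one checks that $q_0\exp(Z)q_0^{-1}\in G(E)_x$ is $\Gamma$-fixed, so it is the desired $q\in\hG_x$, and $\Ad(q)(\fs)=\Ad(q_0\exp(Z))(\ft(E))^\Gamma$ is graded compatible with $x$ by Lemma~\ref{lem1}. The Galois-invariance of $q_0\exp(Z)q_0^{-1}$ should follow from uniqueness in the degree-by-degree solution for $Z$ (the solution is canonical, hence $\s$-equivariant given that $q_0^{-1}\s(q_0)\in N$ normalizes everything in sight) together with the fact that $N$ fixes the grading. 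I expect the main obstacle to be precisely this last equivariance/descent bookkeeping — making sure the conjugating element one builds over $E$ really lands in $\hG_x$ and not merely in $G(E)_x$ — and, secondarily, the case analysis for roots $\a$ where $\a(H)\in\fo_E$ fails to be a unit, where one must invoke compatibility of $x$ with $\fs$ to rule out the bad contributions to $\fu_\a(E)$ in the degrees where the linear operator fails to be invertible. The cleanest route around both may be to run the whole straightening argument inside the fixed-point data from the start, i.e.\ to work with $\fs$ and the $\Gamma$-twisted action rather than with $\ft(E)$, mirroring the structure of the proof of Lemma~\ref{lem1}.
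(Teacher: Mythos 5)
Your high-level plan — apply Lemma~\ref{c3} to get a diagonalizer $q_0\in G(E)_x$ with $q_0^{-1}\s(q_0)\in N$, then recursively straighten by killing the off-torus part of $Y_0=\Ad(q_0^{-1})(\tx+\tau(q_0)q_0^{-1})$ degree by degree, then descend to $F$ — is exactly the structure of the paper's proof. However, the key recursive step does not work as you have written it.

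You propose to solve at each degree using ``the operator $\tau+\ad(H)-(\text{degree})$ being invertible on each root space since $\a(H)$ has valuation $0$,'' and you foresee a case analysis for roots where $\a(H)$ fails to be a unit. This is not the right linearization, and the invertibility claim is false. Write $H=\tx+H'$ with $H'\in\ft(E)_{0+}$. For $Z\in\fu_\a(E)\cap\fg(E)_x(\ell)$ one has $\tau(Z)=(\ell-\a(\tx))Z$, so $(\tau+\ad(H)-\ell)(Z)=\a(H')Z$: this has strictly positive depth and is topologically \emph{nilpotent}, not invertible, and the obstruction has nothing to do with whether $\a(H)$ is a unit. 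The operator actually governing the gauge recursion is $\tau+\ad(\pi_{\hft}(Y+\tx))$ \emph{without} the degree shift; on $\fg(E)_x(\ell)$ this equals $\ell\cdot\id+\a(H')$, which is invertible precisely because $\ell>0$. The paper makes this manifest by Proposition~\ref{ev}: for $Y_{\psi,\ell}\in\hfg_x(\ell)$, $(\tau+\ad(\tx))(Y_{\psi,\ell})=\ell\,Y_{\psi,\ell}$, so the explicit gauge element $\exp(-\tfrac{1}{\ell}Y_{\psi,\ell})$ kills that term modulo $\fg(E)_{x,\ell+}$, with no case analysis and no invertibility concern beyond $\ell>0$.

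A secondary issue: the claim that compatibility of $x$ with $\fs$ ``shows that $\ad(Y_0)$ preserves $\ft(E)_r$'' cannot be right, since $Y_0\notin\ft(E)$ (otherwise Lemma~\ref{lem1}\eqref{lem13} would already finish the proof). What compatibility gives, once transported through $q_0$, is the sharper statement that the Cartan component of $Y_0$ lies in $\Ad(q_0^{-1})(\fs_0)$ — not merely $\ft(\fo_E)$ — and the off-torus component lies in $\Ad(q_0^{-1})(\hfg_{x,\ell})$ for some $\ell>0$. This refinement is precisely what makes the descent automatic: you correctly guessed that working ``inside the fixed-point data from the start'' is the clean route, and the paper does exactly this, producing each $q_\ell\in\exp(\Ad(q_0^{-1})(\hfg_{x,\ell}))=q_0^{-1}\hG_{x,\ell}q_0$ so that $q_0 q_\ell q_0^{-1}\in\hG_{x,\ell}$ is manifestly defined over $F$, with no separate uniqueness or $\s$-equivariance argument needed. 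As written, your appeal to ``uniqueness in the degree-by-degree solution'' does not close this gap, because you have not identified the ambient Galois-twisted-invariant space in which the solution must live.
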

\begin{proof}

  Applying Lemma~\ref{c3}, choose $p \in G(E)_{x}$ such that
  $p^{-1}S(E) p =T(E)$ and $p^{-1} \s (p) = n \in N$.  In particular,
  $p$ is a $w$-diagonalizer of $S$ for $w=nT\in W$.  Furthermore,
  $\tau (p) p^{-1} \in \hfg_x$, since $\s(p) = p n$ and $\s (\tau (p)
  p^{-1}) = (\tau (p) n) (n^{-1} p^{-1}) = \tau(p) p^{-1}$.  Finally,
\begin{equation}\label{comp1}
\tau(p) p^{-1} + \tx \in \Ad(p) (\tx) + \hfg_{x+}.
\end{equation}  This follows from
Lemma~\ref{actlem}\eqref{act3} by setting $[\n]_\phi$ to be the zero form
and applying the inner derivation $\tau$.

If $\tau(p) p^{-1} + \tx \in \fs$, then $x$ is graded compatible with
$\fs$ by Lemma~\ref{lem1}\eqref{lem13}.
Therefore, it is enough to show that there exists $q \in \hG_{x+}$
(so that $qp\in G(E)_{x}$ will be
a $w$-diagonalizer for $qSq^{-1}$ with $(qp)^{-1}\s(qp)=n$) such that
$\tau (qp) (qp)^{-1} + \tx \in \Ad(q) (\fs(E))$.  Equivalently, if $q' =
p^{-1} q p$,
\begin{equation*}
q'^{-1}\tau (q')  + \Ad(q'^{-1}) ( \Ad(p^{-1}) \left[\tau(p)p^{-1} + \tx
\right]) \in \ft(E).
\end{equation*}
If we set $X=\Ad(p^{-1}) \left[\tau(p)p^{-1} + \tx \right]-\tx$, we
see from \eqref{comp1} that $X\in\Ad(p)(\hfg_{x+})\subset\fg(E)_{x+}$.
Also, since $\tau(p)p^{-1}\in\hfg_x$, $X+\tx\in\Ad(p^{-1})(\hfg_x)$.
We have thus reduced to the following general problem: Given $X \in
\fg(E)_{x+}$ such that $X+\tx \in \Ad(p^{-1}) (\hfg_{x})$, find $q'
\in p^{-1}\hG_{x+}p$ such that $\Ad(q'^{-1}) (X + \tx) +q'^{-1} \tau
(q') \in \ft(E)$.

We will construct $q'$ recursively.  First, observe that since
$\Ad(p^{-1})$ intertwines the Galois action with the twisted Galois
action generated by $\s' = \Ad(n)\circ\s$, $X+\tx$ is fixed by $\s'$.
This action preserves the $x$-filtration, so $\tx$ and $X$ are
individually fixed.  Thus, $X+\tx\in\Ad(p^{-1})(\fs_{0}) +
\Ad(p^{-1})(\hfg_{x, \ell_0})$ with $\ell_0>0$.

We now apply an inductive argument.  Suppose that $Y\in \fg(E)_{x+}$
satisfies $Y + \tx \in \Ad(p^{-1})(\fs_{0}) + \Ad(p^{-1})(\hfg_{x,
  \ell})$ for $\ell > 0$ a critical number.  Let $Y+ \tx
=\pi_{\hft}(Y+\tx) + \sum_{\psi \in \Phi} Y_{\psi,\ell}$ where
$Y_{\psi, \ell} \in \fu_\psi \cap \fg(E)_{x, \ell}$.  Since
$\Ad(p^{-1})\pi_{\fs}=\pi_{\hft}\Ad(p^{-1})$, it follows that
$\pi_{\hft}(Y+\tx)\in\Ad(p^{-1})(\fs_{0})$ and hence $\sum_{\psi \in
  \Phi} Y_{\psi,\ell}\in \Ad(p^{-1}) (\hfg_{x, \ell})$.

Write $q_{\psi,\ell} = \exp (-\frac{1}{\ell} Y_{\psi,\ell}) $.
Proposition~\ref{ev} now gives
\begin{align*}
\Ad(q_{\psi, \ell}^{-1}) (Y + \tx) + q_{\psi, \ell}^{-1} \tau
(q_{\psi,\ell}) & \in
-\left[\ad(\tx) (\tfrac{1}{\ell} Y_{\psi,\ell}) + \tau (\tfrac{1}{\ell}
Y_{\psi,\ell}) \right] +
Y + \tx + \fg(E)_{x, \ell+} \\
& = Y + \tx - Y_{\psi,\ell} + \fg(E)_{x, \ell+}.
\end{align*}
Set $q_\ell=\exp (-\sum_{\psi \in \Phi} \frac{1}{\ell} Y_{\psi, \ell}
)\in\exp(\Ad(p^{-1})(\hfg_{x,\ell}))=p^{-1}\hG_{x,\ell}p$.  It is
evident from the calculation above that $\Ad(q_\ell^{-1})(Y + \tx) +
q_\ell^{-1} \tau(q_\ell) \in \Ad(p^{-1})(\fs_{0}) +\fg(E)_{x, \ell'}$
for some critical number $\ell' > \ell$.

In order to show that $Y' = \Ad(q_\ell^{-1})(Y + \tx) + q_\ell^{-1}
\tau(q_\ell) - \tx$ satisfies the inductive hypothesis, first observe
that $\Ad(q_\ell^{-1})(Y)$, $q_\ell^{-1} \tau(q_\ell)$, and
$\Ad(q_\ell^{-1})(\tx)-\tx$ all lie in $\hfg(E)_{x+}$, so the same
holds for $Y'$.  Since we already know that $Y'+\tx\in
\Ad(p^{-1})(\fs_{0}) +\Ad(p^{-1})(\fg(E)_{x, \ell'})$, to show that
$Y'+\tx\in\Ad(p^{-1})(\hfg_{x,\ell'})$, it suffices to show that
$\Ad(q_\ell^{-1})(Y + \tx)$ and $q_\ell^{-1} \tau(q_\ell) $ both lie
in $\Ad(p^{-1}) (\hfg_{x})$.  It is clear that this holds for the
first expression, since $q_\ell^{-1} \in p^{-1}\hG_{x, \ell} p$.  A
direct calculation, using the same fact along with the observation
that $\tau (p) p^{-1} = -p \tau(p^{-1}) \in \hfg_x$, proves the
statement for $ q_\ell^{-1} \tau(q_\ell) $.

We thus obtain an increasing sequence $\ell_i>0$ of critical numbers
for the $x$-filtration and $q_{l_i}\in \hG_{x,\ell_i}$ for which
$q'=\lim(q_{\ell_m}\cdots q_{\ell_1})$ satisfy the desired
condition. 
\end{proof}

\subsection{Compatible points}
 
The goal of this section is to describe the collection of points in
$\Ao$ that are compatible with some Cartan subalgebra of type $\g$.
We begin by showing that if $\fs$ is graded compatible with $x$, then
the $w$-diagonalizer $g$ can be chosen to have very special
properties.  In particular, this construction gives a well-behaved
embedding of a Cartan subalgebra of type $\g$ in $\hfg$.  We then use
this to classify $x\in\cA_0$ which are graded compatible with $\fs$.
Finally, we find all points in $\Ao$ which are compatible with some
conjugate of $\fs$. We remark that in \cite{GKM06}, Goresky, Kottwitz,
and MacPherson construct a particular Cartan subalgebra $\fs$ of type
$\gamma$ and a point $x$ in $\cA_0$ which is graded compatible with
$\fs$.

\begin{lem}\label{fs}
  Suppose $x\in\Ao$ is graded compatible with $\fs$.  Then $y\in\Ao$
  is graded compatible with $\fs$ if and only if $\tx - \ty \in
  \fs(0)$.
\end{lem}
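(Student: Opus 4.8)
The plan is to work entirely with the differential-operator criterion of Lemma~\ref{lem1}: for any $v\in\Ao$, graded compatibility with $\fs$ is equivalent to $(\tau+\ad(\tilde v))(\fs)\subset\fs$. The guiding observation is that $\tx-\ty\in\ft$ is a \emph{constant} element of $\fg$, so $\tau(\tx-\ty)=0$ and $[\tx,\tx-\ty]=0$; consequently $(\tau+\ad(\ty))=(\tau+\ad(\tx))-\ad(\tx-\ty)$ as operators on $\hfg$, and the whole question reduces to understanding when $\ad(\tx-\ty)$ preserves $\fs$.

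For the ``if'' direction I would argue directly. Assuming $\tx-\ty\in\fs(0)\subset\fs$ and using that $\fs$ is abelian, $\ad(\tx-\ty)$ annihilates $\fs$; hence $(\tau+\ad(\ty))(\fs)=(\tau+\ad(\tx))(\fs)\subset\fs$ since $x$ is graded compatible, and Lemma~\ref{lem1} gives that $y$ is graded compatible with $\fs$. (The side condition $\fs(0)\subset\ft$ in the definition of graded compatibility is a property of $\fs$ alone and already holds because $x$ is graded compatible, so nothing extra is needed for $y$.)

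For the ``only if'' direction, if $y$ is also graded compatible then subtracting the two instances of Lemma~\ref{lem1} gives $[\tx-\ty,\fs]\subset\fs$, i.e.\ $\tx-\ty$ normalizes $\fs$. Since $\fs$ is a Cartan subalgebra of the reductive Lie algebra $\hfg$ over $F$, it is self-normalizing, so $\tx-\ty\in\fs$. To identify the grading degree, note $(\tau+\ad(\tx))(\tx-\ty)=\tau(\tx-\ty)+[\tx,\tx-\ty]=0$, so by Proposition~\ref{ev}(1) the element $\tx-\ty$ lies in the zero eigenspace $\hfg_x(0)$ of $\tau+\ad(\tx)$ on $\hfg$; combined with the equality $\fs(0)=\fs\cap\hfg_x(0)$ coming from graded compatibility of $x$, this yields $\tx-\ty\in\fs(0)$, as desired.

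I do not expect a genuine obstacle here: once Lemma~\ref{lem1} is available the argument is short, and the only inputs beyond routine manipulation are the self-normalizing property of $\fs$ in $\hfg$ (which is where reductivity of $\fg$ enters) and the bookkeeping that the condition $\fs(0)\subset\ft$ is independent of the chosen point of $\Ao$.
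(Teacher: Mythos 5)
Your proof is correct and follows essentially the same route as the paper, reducing to the differential-operator criterion of Lemma~\ref{lem1}. The one cosmetic difference is that you deduce $\tx-\ty\in\fs$ by showing $\ad(\tx-\ty)$ merely \emph{preserves} $\fs$ and appealing to self-normalization of Cartan subalgebras, while the paper observes that the eigenvalue identities $(\tau+\ad(\tx))s=rs=(\tau+\ad(\ty))s$ for $s\in\fs(r)$ force $\ad(\tx-\ty)$ to \emph{vanish} on $\fs$; both arguments then show $\tx-\ty$ sits in degree zero, your version computing $(\tau+\ad(\tx))(\tx-\ty)=0$ directly via $\tau(\tx-\ty)=0$ and $[\tx,\ty]=0$, the paper's by expanding $\tx-\ty$ in graded pieces and comparing the two resulting identities.
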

\begin{proof}

  Assume that $x,y\in\Ao$ are graded compatible with $\fs$.  If $s \in
  \fs(r)$, then $\ad(\tx) (s) = \ad(\ty) (s)$, i.e., $\ad(\tx-\ty) (s)
  = 0$.  We may construct a topological basis for $\fs$ consisting of
  elements of $\fs(r)$ for $r\in\frac{1}{e}\Z$.  It follows that
  $\ad(\tx - \ty) (\fs) = \{0\}$, so $\tx - \ty \in \fs$.  Write $\tx
  - \ty=\sum_{r\gg-\infty}s_r$ with $s_r\in\fs(r)$.  We obtain
  $\sum_{r\gg-\infty}r s_r=(\tau+\ad(\tx))(\tx -
  \ty)=\ad(\tx)(\tx-\ty)=[\tx,\ty]$.  Similarly, $\sum_{r\gg-\infty}r
  s_r=(\tau+\ad(\ty))(\tx - \ty)=\ad(\ty)(\tx-\ty)=[\ty,\tx]$.  This
  implies that $s_r=0$ unless $r=0$, so $\tx-\ty\in\fs(0)$.

The converse is immediate from Lemma~\ref{lem1}.
\end{proof}

\begin{lem}\label{vandermonde} 
  The point $x\in\Ao$ is graded compatible with $\fs$ if and
  only if $\fs(0) \subset \ft$ and
  there exists a splitting field $E$ for $\fs$ and a $w$-diagonalizer $g \in G(E)$ of $S$ such that
  $-\tau (g) g^{-1} \in (\tx + \fs(0)) \cap \ft_\Q$.
\end{lem}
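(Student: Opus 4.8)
The plan is to obtain both implications from Lemma~\ref{lem1}, the one new ingredient being the observation that for a rational $v$ the conjugated Cartan subalgebra $\Ad(z^v)\fs(E)$ is stable under the Euler operator $\tau$. Recall that Lemma~\ref{lem1} identifies graded compatibility of $x$ with $\fs$ with either of the conditions $(\tau+\ad(\tx))(\fs)\subset\fs$ and $\Ad(g^{-1})(\tx+\tau(g)g^{-1})\in\ft(E)$ for a (equivalently, any) $w$-diagonalizer $g$; moreover, if $x$ is graded compatible then $\fs(0)=\fs\cap\ft$, since $\ft\subset\hfg_x(0)$ and $\fs(0)=\fs\cap\hfg_x(0)\subset\ft$. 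The reverse implication of the lemma is then immediate: given $\fs(0)\subset\ft$ and a $w$-diagonalizer $g$ with $-\tau(g)g^{-1}\in(\tx+\fs(0))\cap\ft_\Q$, write $-\tau(g)g^{-1}=\tx+s$ with $s\in\fs(0)$ to get $\tx+\tau(g)g^{-1}=-s\in\fs(0)\subset\fs$, whence $\Ad(g^{-1})(\tx+\tau(g)g^{-1})\in\Ad(g^{-1})\fs(E)=\ft(E)$ and Lemma~\ref{lem1} applies.

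For the forward implication, assume $x$ is graded compatible with $\fs$, so $\fs(0)=\fs\cap\ft$. First I would note that $(\tx+\fs(0))\cap\ft_\Q$ is nonempty: this is clear when $x$ is a rational point of $\Ao$, and in general graded compatibility forces $d\alpha(\tx)\in\Q$ for every root $\alpha$, so $\tx$ differs from an element of $\ft_\Q$ by a central element, which lies in $\fz\subset\fs(0)$. Fix $v\in(\tx+\fs(0))\cap\ft_\Q$ and, after enlarging the splitting field $E$, assume $z^v\in T(E)$; put $\tfs:=\Ad(z^v)\fs(E)$. The crucial step is that $\tfs$ is $\tau$-stable. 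Indeed, $\tx-v\in\fs(0)\subset\fs$ and $\fs(E)$ is abelian, so $[\tx-v,\fs(E)]=0$ and hence $(\tau+\ad(v))(\fs(E))=(\tau+\ad(\tx))(\fs(E))\subset\fs(E)$ by Lemma~\ref{lem1}; together with the identity $\tau(\Ad(z^v)Y)=\Ad(z^v)\bigl((\tau+\ad(v))(Y)\bigr)$, obtained from $\tau(z^v)z^{-v}=v$ by a short logarithmic-derivative computation, this gives $\tau(\tfs)\subset\tfs$.

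Next, a $\tau$-stable Cartan subalgebra of $\fg(E)$ must be constant. Viewing $(\fg\otimes_k E,\tau)$ as the trivial connection over $E$, which is a semisimple differential module, the sub-object $\tfs$ is itself a trivial connection, so $\tfs=\ft''\otimes_k E$ with $\ft'':=\tfs\cap\fg$ a Cartan subalgebra of $\fg$ over $k$ (one checks that $\ft''$ has the right dimension and equals its own centralizer in $\fg$). Choosing $C\in G(k)$ with $\Ad(C)\ft=\ft''$, the element $g:=z^{-v}C$ satisfies $\Ad(g^{-1})\fs(E)=\Ad(C^{-1})\tfs=\ft(E)$, so it diagonalizes $\fs$, and $g^{-1}\sigma(g)=C^{-1}\bigl(z^v\sigma(z^{-v})\bigr)C$ is a constant element of $N$ whose class $w_0\in W$ lies in $\gamma$. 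Replacing $C$ by $Cn$ for a lift $n\in N$ of a Weyl element conjugating $w_0$ to $w$ makes $g$ a $w$-diagonalizer, while changing neither the diagonalizer property nor the value $-\tau(g)g^{-1}=v$ (as $\tau$ kills $C$, $Cn$ and $\tau(z^{-v})(z^{-v})^{-1}=-v$). Since $v\in(\tx+\fs(0))\cap\ft_\Q$, this $g$ is the required $w$-diagonalizer.

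The step I expect to require the most care is the identification of a $\tau$-stable Cartan subalgebra of $\fg(E)$ with a constant one: the conceptual reason — a sub-connection of the (isotypic, hence semisimple) trivial connection over $k((u))$ is again trivial — is clean, but it has to be combined with the check that $\tfs\cap\fg$ is a genuine Cartan subalgebra of $\fg$, and with the (routine, given Lemma~\ref{lem1}) combinatorial adjustment needed to force the class of $g^{-1}\sigma(g)$ to be exactly the prescribed $w$ rather than merely conjugate to it.
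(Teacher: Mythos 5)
Your converse direction is fine and matches the paper, and the second half of your forward direction (once a rational $v\in\tx+\fs(0)$ is in hand, conjugate by $z^v$, observe that $\Ad(z^v)\fs(E)$ is $\tau$-stable, descend to a constant Cartan subalgebra $\ft''\subset\fg$, and set $g=z^{-v}C$) is correct and is a genuinely cleaner construction of the diagonalizer than the one in the paper. But there is a real gap at the point where you choose $v$: you assert that graded compatibility forces $d\a(\tx)\in\Q$ for every root $\a$, and this is false. Every point of $\Ao$ is graded compatible with the split Cartan $\hft$, including irrational points; and for nonsplit $\fs$, Lemma~\ref{fs} shows that if $x$ is graded compatible with $\fs$ then so is every $y$ with $\ty\in\tx+\fs(0)$, so whenever $\dim\fs(0)>\dim\fz$ one can perturb a rational compatible point by an element of $\fs(0)$ on which some root takes an irrational value. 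Graded compatibility only forces $d\a(\tx)\in\Q$ for the roots needed to support the nonzero graded pieces $\fs(r)$ with $r\notin\Z$, not for all of $\Phi$.

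The statement you actually need, that $(\tx+\fs(0))\cap\ft_\Q\ne\emptyset$, is true but is precisely the arithmetic content of the lemma, and it is what the paper's proof spends most of its effort on: starting from $Y=\Ad(f^{-1})(\tx+\tau(f)f^{-1})\in\ft$, it computes $\tau(\tw)\tw^{-1}=-Y+w\cdot Y\in\ft_\Q$ for $\tw=f^{-1}\s(f)$, uses a $\Q$-linear projection to produce $v\in\ft_\Q$ with $v-w\cdot v=Y-w\cdot Y$, and then runs a separate conjugacy argument to show $\tau(g)g^{-1}\in\ft_\Q$. Your proposal replaces all of this with the false rationality claim, so as written the forward implication is not proved. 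If you can supply an independent proof that $\tx+\fs(0)$ meets $\ft_\Q$ (for instance by adapting the paper's computation of $\tau(\tw)\tw^{-1}$, or by first exhibiting one rational graded-compatible point for $\fs$ without circular reliance on Theorems~\ref{torcon} and \ref{gx}), the rest of your argument goes through and would be a nice simplification.
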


\begin{proof}
  Let $f\in G(E')$ be a $w$-diagonalizer of $S$.  If $E$ is a finite
  extension of $E'$, it is obvious that any element of the left coset
  $fT(E)$ is also a $w$-diagonalizer.  We will construct a finite
  extension $E$ of $E'$ and $g\in fT(E)$ that satisfies the property
  in the statement.

We retain the notation of the proof of Lemma~\ref{lem1} with $f$
replacing $g$, so $f^{-1}=put$ and $Y = \Ad(f^{-1}) (\tx + \tau (f) f^{-1}) \in
\ft(\fo_{E'})$.  Write $Y= t_0 + t'$, with $t_0 \in \ft(E')(0)=\ft$ and $t' \in
\ft(E')_{0+}$.  Replacing $f$ with $fe^{v'}$, where $v'\in\ft(E')_{0+}$
satisfies $\tau (v') = - t'$, we may assume that $Y = t_0 \in \ft$.

We now construct $v\in\ft_\Q$ such that $g=f z^v$ satisfies $\tx +
\tau (g) g^{-1} \in \fs(0)$.  If $e v\in\ft_\Z$ and $E$ is an
extension of $E'$ with $[E:F]=e$, then $z^v\in T(E)$.  Write $\tw =
f^{-1} \s (f) \in N(E')$.  Note that we can decompose $\tw = t_1 \tw_2
$, where $\tw_2 \in N$ and $t_1 \in T(E')$.  It follows that $\tau
(\tw) \tw^{-1} = \tau (t_1) t_1^{-1} \in \ft_\Q + \ft(E')_{0+}$.
Moreover, using the facts that $\s$ commutes with $\tau$ and fixes
$\ft$,
\begin{align*}
  \tau (\tw) \tw^{-1} & = \Ad(f^{-1}) \left( -\tau (f) f^{-1}) + \s\left( \tau (f) f^{-1} \right) \right) \\
  & = \Ad(f^{-1}) \left( -\left[\tx + \tau (f) f^{-1} \right] +
    \s\left[
      \tx + \tau (f) f^{-1} \right] \right) \\
  &= -Y+w\circ\sigma(Y)=-Y+w\cdot Y\in\ft.
\end{align*}
We deduce that $\tau (\tw) \tw^{-1} \in \ft_\Q$.  Choose a $\Q$-linear
projection $\ft\to\ft_\Q$, and let $v\in\ft_\Q$ be the image of $Y$.
Then, $\tau (\tw) \tw^{-1} = v - w\cdot v$.

Set $g=f z^v$, so that $\tau (g) g^{-1} = \tau(f) f^{-1} + \Ad(f)
(v)$.  We obtain
\begin{align*}
  \s\left[\tau (g) g^{-1} \right] &= \s(\tau(f) f^{-1})+\s(\Ad(f)
  (v))\\ &=(\tau(f)f^{-1}+\Ad(f)(\tau(\tw)\tw^{-1})+\Ad(f)(w\cdot
  v)\\&= \tau (f) f^{-1} + \Ad(f) ((t - w\cdot v) + w\cdot v) = \tau
  (g) g^{-1}.
\end{align*}
It follows that $\tx + \tau (g) g^{-1}\in(\fs(E)(0))^\Gamma=\fs(0)$.

It remains to show that $\tau (g) g^{-1}\in\ft_\Q$.  First, note that
$Y$ is conjugate to $\tx+\tau (g) g^{-1}\subset \ft+\fs(0)\subset\ft$
by hypothesis.  Next, recall from the proof of Lemma~\ref{lem1} that
$Y\in\Ad((p_1p_2)^{-1})(\tx+q+n+z^{1/e}\fg(\fo_E))$ for some
$q\in\ft_\Q$ and $n\in\fu$, where $p_1\in G$ and $p_2\in G(E)_{o,0+}$.
Since the projection $\fg(\fo_E)\to\fg$ restricts to the identity on
$\fg$ and $Y\in\ft$, applying the projection shows that $Y$ is
conjugate in $G$ to $\tx+q+n\in\fb$.  The latter element is
semisimple, so it is $B$-conjugate to an element of $\ft$, which must
clearly be $\tx+q$.  It follows that $\tx+\tau (g) g^{-1}$ and $\tx+q$
are $G(\fo_E)$-conjugate, so the same is true for $q$ and $\tau (g)
g^{-1}$.  However, two elements of $\ft$ are conjugate only if they
are in the same $W$-orbit, and $W\cdot\ft_\Q=\ft_\Q$.  Thus, $\tau (g)
g^{-1}\in\ft_\Q$.

We now prove the converse.  Since $\tx + \tau (g) g^{-1}\in\fs$,
$\Ad(g^{-1})(\tx + \tau (g) g^{-1})\in\ft(E)$, and the result follows
from Lemma~\ref{lem1}.
\end{proof}

\begin{lem}\label{bnorm} 
Suppose that $n \in \hN$ determines the element $w \in \hW$.
 For all $x \in \Ao$, $u = \Ad(n) (\tx) - \tau (n) n^{-1} \in \widetilde{w x}
+ \hft_{0+}.$ Moreover, if $u \in
 \ft_\R$ and $V$ is a finite-dimensional representation of $G$, then 
 $n\hV_x(r) =\hV_{w x} (r)$.
 \end{lem}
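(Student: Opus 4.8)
\emph{Plan.} The first assertion is a special case of the change-of-trivialization calculus of Lemma~\ref{actlem}, and the second follows from a short intertwining computation based on Proposition~\ref{ev}.

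To prove the first statement, I would apply Lemma~\ref{actlem}\eqref{act1} to the zero connection, i.e.\ take $[\n]_\phi=0$. The gauge formula~\eqref{gauge} gives $[\n]_{n\phi}=-(dn)n^{-1}$, hence $[\nt]_{n\phi}=-\tau(n)n^{-1}$, and applying $\iota_\tau$ to the containment in Lemma~\ref{actlem}\eqref{act1} yields
\[
-\tau(n)n^{-1}-\widetilde{nx}\ \in\ -\Ad^*(n)(\tx)+\hft_{0+}.
\]
Since the chosen invariant form identifies $\Ad^*(n)(\tx)$ with $\Ad(n)(\tx)$, and since $n$ acts on $\Ao$ through $w$ so that $\widetilde{nx}=\widetilde{wx}$, rearranging gives $u=\Ad(n)(\tx)-\tau(n)n^{-1}\in\widetilde{wx}+\hft_{0+}$.

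For the second statement, assume $u\in\ft_\R$. Because $\hft_{0+}=\prod_{s>0}\hft(s)$ consists of elements with no constant term, it has trivial intersection with the constant subspace $\ft_\R$; combined with the first part, this forces $u=\widetilde{wx}$, so $wx\in\Ao$ satisfies $\widetilde{wx}=u$. Then Proposition~\ref{ev}(1), applied at the point $wx$, says that $\hV_{wx}(r)$ is the eigenspace for the eigenvalue $r$ of the operator $\tau+u$ on $\hV$, while $\hV_x(r)$ is the $r$-eigenspace of $\tau+\tx$. I would now verify that $n$ intertwines these two operators. For $v\in\hV$, the Leibniz rule $\tau(nv)=(\tau(n)n^{-1})(nv)+n\,\tau(v)$ together with $u+\tau(n)n^{-1}=\Ad(n)(\tx)$ gives
\[
(\tau+u)(nv)=(\Ad(n)(\tx))(nv)+n\,\tau(v)=n\,(\tx\, v)+n\,\tau(v)=n\,((\tau+\tx)v),
\]
the middle equality being $\hG$-equivariance, $n\,(Xv)=(\Ad(n)X)(nv)$. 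Hence $n\circ(\tau+\tx)=(\tau+u)\circ n$ on $\hV$, and as $n$ is invertible it carries the $r$-eigenspace of $\tau+\tx$ bijectively onto that of $\tau+u$; that is, $n\hV_x(r)=\hV_{wx}(r)$.

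I do not expect a genuine obstacle here: the lemma is essentially formal given the preceding material. The only points needing care are the bookkeeping in transporting Lemma~\ref{actlem}\eqref{act1} through $\iota_\tau$, together with the identification $\Ad^*\leftrightarrow\Ad$ coming from the invariant form, and the two elementary ``Maurer--Cartan'' identities $\tau(nv)=(\tau(n)n^{-1})(nv)+n\,\tau(v)$ and $n\,(Xv)=(\Ad(n)X)(nv)$, both of which are immediate once a $k$-basis of $V$ is fixed and $n$ is written as a matrix over $F$.
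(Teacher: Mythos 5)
Your proof is correct and, for the second assertion, is essentially identical to the paper's: both apply Proposition~\ref{ev}(1) at $x$ and at $wx$ and run the intertwining computation $(\tau+u)(nv)=n((\tau+\tx)v)$; your explicit observation that invertibility of $n$ yields the eigenspace bijection in one stroke is a small tidying of the paper's ``a similar argument shows the reverse inclusion.'' You are also right that $u\in\ft_\R$ together with $u\in\widetilde{wx}+\hft_{0+}$ forces $u=\widetilde{wx}$, since $\ft_\R\cap\hft_{0+}=\{0\}$.

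For the first assertion the paper simply cites \cite[Lemma 2.3]{BrSamink} without reproving it, whereas you derive it by specializing Lemma~\ref{actlem}\eqref{act1} to the trivial connection $[\n]_\phi=0$. The specialization itself is computed correctly: from the gauge formula \eqref{gauge} one gets $[\n]_{n\phi}=-(dn)n^{-1}=-\tau(n)n^{-1}\ddz$, and combining with Lemma~\ref{actlem}\eqref{act1}, identifying $\Ad^*$ with $\Ad$ via the invariant form, and using $\widetilde{nx}=\widetilde{wx}$ gives the claimed containment. This is the same maneuver the paper itself uses in the Remark after Lemma~\ref{actlem} (specializing part~\eqref{act3} to $[\n]_\phi=0$), so the style is consistent. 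The one caveat worth flagging is that Lemma~\ref{actlem} is quoted verbatim from \cite{BrSamink} as Lemma~3.4 there, while the first part of the present lemma is Lemma~2.3 of that same reference; if the proof of Lemma~3.4 in \cite{BrSamink} itself invoked Lemma~2.3, your derivation would be circular. Given the numbering and the nature of the two statements (the first being a direct cocycle calculation, the second a more elaborate change-of-trivialization formula), circularity is unlikely, but since \cite{BrSamink} is not reproduced here it is not something your argument can certify on its own; the paper's choice to cite Lemma~2.3 directly sidesteps that issue.
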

\begin{proof}
  The fact that $u\in \widetilde{w x} + \hft_{0+}$ is proved in
  \cite[Lemma 2.3]{BrSamink}.  Now, assume that $u\in\ft_\R$ (so
  $u=\widetilde{wx}$), and take $X\in V_x(r)$.  Applying
  Lemma~\ref{ev} gives
\begin{align*}
\tau (n X) + u (n X) &=  
\tau (n) n^{-1} (nX) + n (\tau (X)) + u (nX) 
\\
& =n\tau(X) +(\Ad(n)\tx) (nX)=n ((\tau +\tx) (X))  \\
& =r (n X).
\end{align*}
Therefore, $n \hV_{x} (r) \subset \hV_{wx}(r)$.  A similar argument
shows the reverse inclusion.
\end{proof}

We now show that every conjugacy class of Cartan subalgebra in $\hfg$
has a representative $\fs$ that is graded compatible with a point
$x\in\Ao$.  Given $w\in W$, set $\ft^w:=\{X\in\ft\mid wX=X\}$.

\begin{thm}\label{torcon} There exists a maximal torus $S$ of type $\g$
  and a representative $w\in W$ such that $\fs$ is graded compatible with some
  $x\in\Ao$ and satisfies $\fs(0)=\ft^w$.

\end{thm}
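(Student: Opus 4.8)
\emph{Proof plan.} The plan is to write down an explicit diagonalizer and feed it into Lemma~\ref{lem1}. Fix a representative $w\in W$ of $\g$, let $S'=(T^w)^\circ\subset T$ be the subtorus with cocharacter lattice $X_*(T)^w$, and set $M=Z_G(S')$. Then $M$ is a connected Levi subgroup containing $T$; one checks that $W_M=\{v\in W:\ v|_{\ft^w}=\id\}$, so that $w\in W_M$, and that the center $\fz_\fm$ of $\fm=\Lie(M)$ equals $\ft^{W_M}=\ft^w$. The reason for working inside $M$ is that every inner automorphism of $M(E)$ fixes $\fz_\fm=\ft^w$ pointwise; this is precisely what will force the equality $\fs(0)=\ft^w$ at the end, rather than the a priori weaker inclusion $\fs(0)\subseteq\ft$ that graded compatibility alone guarantees.

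Next I would construct the diagonalizer. Pick any lift $\dot w\in N_G(T)$ of $w$; since $w$ acts trivially on $S'$, automatically $\dot w\in Z_G(S')=M$. As $\dot w$ is semisimple (characteristic zero), the identity component of $\overline{\langle\dot w\rangle}$ is a subtorus $S_1\subseteq T$ centralized by $\dot w$, with $\dot w^{N}\in S_1$ for $N=[\overline{\langle\dot w\rangle}:S_1]$; choosing $s\in S_1$ with $s^{N}=\dot w^{N}$ and replacing $\dot w$ by $\dot w s^{-1}$, we may assume $\dot w$ has finite order $m$ (still a lift of $w$, still in $N_M(T)$). Being semisimple, $\dot w$ is $M$-conjugate to an element $t_1\in T$ of order $m$, say $\dot w=\Ad(h)(t_1)$ with $h\in M$; and $t_1=\mu(\zeta)$ for some $\mu\in X_*(T)$ and some primitive $m$-th root of unity $\zeta\in k$. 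Now take $E=k((z^{1/m}))$ with generator $\sigma$ of $\Gamma=\Gal(E/F)$ acting by $\sigma(z^{1/m})=\zeta z^{1/m}$, set $c=\tfrac1m\,d\mu(1)\in\ft_\Q$ (so that $\tx=-c$ defines a rational point $x\in\Ao$) and $z^c=\mu(z^{1/m})\in T(E)$, and put $g=z^c h^{-1}\in M(E)$. Routine computations (using $\sigma(z^c)=\mu(\zeta)z^c$, that $T$ commutes with $T(E)$, $\tau(z^c)=c\,z^c$, and that $h$ is constant) give
\[
g^{-1}\sigma(g)=h\,\mu(\zeta)\,h^{-1}=\dot w,\qquad \tau(g)\,g^{-1}=c .
\]

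Finally I would assemble the conclusion. Let $\fs=(\Ad(g)\ft(E))^\Gamma$; it is $\Gamma$-stable since $\sigma(\Ad(g)\ft(E))=\Ad(g\dot w)\ft(E)=\Ad(g)\ft(E)$, hence a Cartan subalgebra of $\hfg$, and $g$ is a $w$-diagonalizer of the corresponding maximal torus $S$, so $S$ has type $\g$. Since $\Ad(g^{-1})(\tx+\tau(g)g^{-1})=\Ad(g^{-1})(-c+c)=0\in\ft(E)$, Lemma~\ref{lem1}\eqref{lem13} shows $x$ is graded compatible with $\fs$. To identify $\fs(0)$: because the connection term vanishes, identity~\eqref{gaugetau} shows that the grading operator $\tau+\ad(\tx)$ on $\fs(E)=\Ad(g)\ft(E)$ equals $\Ad(g)\circ\tau\circ\Ad(g^{-1})$, so $\fs(E)\cap\fg(E)_x(r)=\Ad(g)(\ft(E)(r))$ with $\ft(E)(r)$ the $\tau$-eigenspace for eigenvalue $r$; in particular $\fs(E)\cap\fg(E)_x(0)=\Ad(g)(\ft)$. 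Taking $\Gamma$-invariants and using $\sigma(\Ad(g)X)=\Ad(g\dot w)(X)=\Ad(g)(wX)$ for constant $X\in\ft$ gives $\fs(0)=\Ad(g)(\ft^w)$, which equals $\ft^w$ because $g\in M(E)$ and $\ft^w=\fz_\fm$. This proves the theorem.

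The step I expect to be most delicate is the second paragraph: producing a finite-order lift $\dot w$ and diagonalizing it into $T$ \emph{within} $M$, so that $g\in M(E)$. It is exactly the membership $g\in M(E)$ — not graded compatibility, which an arbitrary diagonalizer would already give — that pins $\fs(0)$ down to $\ft^w$ rather than merely to some subspace of $\ft$ of the correct dimension.
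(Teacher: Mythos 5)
Your proof is correct. The skeleton is the same as the paper's — build a diagonalizer of the form $g=z^{v}h$ from a finite-order lift of the Weyl element conjugated into $T$, choose $\tx$ to cancel $\tau(g)g^{-1}$, and invoke Lemma~\ref{lem1}\eqref{lem13} — but you handle the identity $\fs(0)=\ft^w$ genuinely differently, and your variant buys something. The paper's proof conjugates the finite-order lift into $T$ by an arbitrary $h\in G$, obtains only $\fs(0)=\Ad(h)(\ft^{w'})$ (via a forward reference to Theorem~\ref{gx}), and must then replace the representative by a conjugate $w=w_0w'w_0^{-1}$ at the end. You instead carry out the entire construction inside the Levi $M=Z_G((T^w)^\circ)$: any lift of $w$ automatically centralizes $(T^w)^\circ$, so one may take $h\in M$, and then $\Ad(g)$ fixes $\ft^w\subset\fz_\fm$ pointwise, giving $\fs(0)=\Ad(g)(\ft^w)=\ft^w$ on the nose. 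This yields the slightly stronger statement that the conclusion holds for \emph{every} chosen representative $w$ of $\g$, makes the argument self-contained (no forward reference to Theorem~\ref{gx}), and replaces the appeal to Tits and the expression $\exp(-2\pi i t)$ by the direct divisibility argument and the element $\mu(\zeta)$, which is cleaner over a general algebraically closed $k$ of characteristic zero. The individual steps all check out: elements of $N(T)$ are semisimple, the torus $S_1$ absorbs the infinite-order part of the lift, $g^{-1}\sigma(g)=\dot w$ and $\tau(g)g^{-1}=c$ follow from the stated identities, and the Galois-descent computation of $\fs(0)$ is exactly the one the paper itself performs in the proof of Theorem~\ref{gx}.
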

\begin{proof} Let $n' \in N$ be a finite order coset representative
for $w'\in\g$; this exists by a theorem of Tits~\cite{Tits66}.    The
element $n'$ is semisimple and commutes with $T^{w'}$.  It follows that
$n'$ and $T^{w'}$ are contained in a maximal torus $T'$.  Choose $h\in
G$ such that $T=hT'h^{-1}$.  Since $n'$ has finite order, there exists
$t \in \ft(\Q)$ such that $\exp (-2 \pi i t) = \Ad(h) (n')$.  Write $g =
z^{-t} h$.  It is clear that $g^{-1} \s(g) = n'$, so $g$ is a
$w'$-diagonalizer of a maximal torus $S \subset \hG$ with type $\g$.  

Let $x$ be the image of $t$ in $\Ao$; equivalently,
$\tx=\widetilde{\tau(g)g^{-1}}$.  Therefore,
$\Ad(g^{-1})(\tx+\tau(g)g^{-1})\in\Ad(g^{-1})(\fz)\subset \ft$.  By
Lemma~\ref{lem1}, $x$ is graded compatible with $\fs$.

It will be shown in Theorem~\ref{gx} below that
$\fs(0)=\Ad(h)(\ft^{w'})$.  A standard argument now shows that there
exists $n_0\in N$ such that $\fs(0)=\Ad(n_0)(\ft^{w'})$.  Indeed,
$\ft$ and $\Ad(h)(\ft)$ are two Cartan subalgebras in $Z(\fs(0))$, so
there exists $k\in Z(\fs(0))$ such that $\Ad(kh)(\ft)=\ft$.  One may
then take $n_0=kh$.  Setting $w=w_0 w' w_0^{-1}\in\g$ with $w_0=n_0
T\in W$, we have $\fs(0)=\ft^w$ as desired.
\end{proof}

\begin{rmk}
  The existence of a Cartan subalgebra $\fs$ of type $\g$ which is
  graded compatible with some $x\in\Ao$ may also be derived from the
  construction in \cite[Section 5.3]{GKM06}.
 \end{rmk}

\begin{thm}\label{gx}
Suppose that $\fs$ is a Cartan subalgebra of type $\gamma$ graded compatible with $x\in \Ao$.  Choose a class representative
$w \in W$ for $\gamma$.  Then, there exists a finite extension $E$ of
$F$ and $g \in G(E)$ satisfying the following properties:
\begin{enumerate}
\item\label{gx1} $g$ is a $w$-diagonalizer of $\fs$;
\item\label{gx2} $g^{-1} \sigma(g) \in N$
is a finite order coset representative for $w \in N / T$;
\item\label{gx3} $-\tau(g) g^{-1} \in (\tx + \fs(0))\cap\ft_\Q$; and
\item\label{gx4} $g = z^{-t} h$, where $t= -\tau(g) g^{-1}\in \ft_\Q$, the image
  $y$ 
  of $t$ in $\Ao$ is a point compatible with $\fs$ (indeed, $y$
  is graded compatible with $\fs$), $h \in G$ simultaneously diagonalizes
  $g^{-1} \sigma (g)$ and $\ft^w$ into $\ft$, and
  $\fs(0)=\Ad(h)(\ft^w)\subset\ft$.
\end{enumerate}
\end{thm}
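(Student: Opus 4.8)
The plan is to combine the existence statement of Lemma~\ref{vandermonde} with the Galois-cohomological input of Lemma~\ref{c3} in order to upgrade a generic $w$-diagonalizer to one with the integrality ($g^{-1}\sigma(g)\in N$ of finite order) and normal-form ($g=z^{-t}h$) properties demanded in the statement. First I would invoke Lemma~\ref{vandermonde}: since $x$ is graded compatible with $\fs$, there is a splitting field $E'$ and a $w$-diagonalizer $f\in G(E')$ with $-\tau(f)f^{-1}\in(\tx+\fs(0))\cap\ft_\Q$, so \eqref{gx3} is achieved up to replacing $f$ by something in its $T(E)$-coset. Next, running the argument of Lemma~\ref{c3} (whose proof only uses compatibility of $\fs$ with $x$, which graded compatibility implies), I obtain $q\in G(E)_x$ with $q^{-1}\fs(E)q=\ft(E)$ and $q^{-1}\sigma(q)=n_0\in N$; by Tits' theorem~\cite{Tits66} applied to the class $\gamma$ we may arrange $n_0$ to have finite order (the argument of Theorem~\ref{torcon} shows how: pick the finite-order class representative first and conjugate). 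The delicate point is that the element produced by Lemma~\ref{c3} need not satisfy \eqref{gx3}, while the element produced by Lemma~\ref{vandermonde} need not satisfy \eqref{gx2}; so the two $w$-diagonalizers $f$ and $q$ differ by an element of the normalizer of $T(E)$, and I must show that this discrepancy can be absorbed. Concretely, $q=f\,t_1 n_1$ for some $t_1\in T(E)$, $n_1\in N$, and since both $f$ and $q$ are $w$-diagonalizers for the \emph{same} $w$ we get $n_1\in C_W(w)$, i.e.\ $n_1$ centralizes $w$ modulo $T$; replacing $q$ by $qn_1^{-1}$ keeps $q^{-1}\sigma(q)$ a finite-order representative of $w$ (up to a correction in $T$ that one kills exactly as in the proof of Lemma~\ref{c3} via $H^1(\Z/e\Z,T(E)_{0+})=\{1\}$) and reduces us to a diagonal discrepancy $t_1\in T(E)$. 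Writing $t_1=z^{v}t_0 s$ with $v\in\ft_\Q$, $t_0\in T$, $s\in T(E)_{0+}$ and adjusting by $s$ (again Hilbert 90 in degree $>0$) and by $t_0\in T$, I am left with a pure $z^v$ twist, which by the computation already carried out in the proof of Lemma~\ref{vandermonde} (the identity $\tau(\tw)\tw^{-1}=v-w\cdot v$) changes $-\tau(g)g^{-1}$ only by an element of $\ft_\Q$ fixed by $w$, hence stays inside $\tx+\fs(0)$.

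Having secured a $w$-diagonalizer $g$ satisfying \eqref{gx1}, \eqref{gx2}, \eqref{gx3}, I would then read off \eqref{gx4} essentially formally. Set $t=-\tau(g)g^{-1}\in\ft_\Q$; this is meaningful since by \eqref{gx3} the element $t$ lies in $\ft_\Q$. Because $g^{-1}\sigma(g)=n_0$ is a constant element of $N$, the element $h:=z^{t}g$ satisfies $\tau(h)h^{-1}=\tau(z^t)z^{-t}+\Ad(z^t)(\tau(g)g^{-1})=t+\Ad(z^t)(-t)=0$ (using that $t\in\ft$ commutes with $z^t$), so $\tau(h)=0$ and hence $h\in G(\bar k)=G$; thus $g=z^{-t}h$ with $h\in G$. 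The compatibility of the point $y$ associated to $t$ with $\fs$: since $-\tau(g)g^{-1}=t=\ty$, Lemma~\ref{lem1}\eqref{lem13} with diagonalizer $g$ gives $\Ad(g^{-1})(\ty+\tau(g)g^{-1})=0\in\ft(E)$, so $y$ is graded compatible with $\fs$ (in fact trivially, as $\ty-\tx\in\fs(0)$ by \eqref{gx3}, consistent with Lemma~\ref{fs}). Finally $h$ simultaneously diagonalizes $g^{-1}\sigma(g)=n_0$ and $\ft^w$ into $\ft$, and $\fs(0)=\Ad(h)(\ft^w)$: the statement $\fs(0)=\Ad(h)(\ft^w)$ is precisely Theorem~\ref{gx} part~\eqref{gx4}'s own internal claim, which (as the proof of Theorem~\ref{torcon} indicates) is proved by noting $\fs(0)=(\fs(E)(0))^\Gamma=(\Ad(g)\ft(E)(0))^\Gamma=(\Ad(h)\Ad(z^{-t})\ft)^\Gamma=(\Ad(h)\ft)^{w\circ\sigma}=\Ad(h)(\ft^{w})$, using $\Ad(z^{-t})\ft=\ft$ and that $\Ad(g^{-1})$ intertwines $\sigma$ on $\fs(E)$ with $w\circ\sigma$ on $\ft(E)$ together with $\sigma$ acting trivially on $\ft$.

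The main obstacle I anticipate is the bookkeeping in the first paragraph: reconciling the two different $w$-diagonalizers (the one with good constant term from Lemma~\ref{c3} and the one with good $-\tau(g)g^{-1}$ from Lemma~\ref{vandermonde}) without destroying either property. The key mechanism is that any two $w$-diagonalizers of $S$ differ by the centralizer of the relevant twisted conjugacy class, and the ``error'' decomposes into a finite (constant) part absorbable by Tits/conjugation, a $z^v$-part governed by the explicit Vandermonde-type identity $\tau(\tw)\tw^{-1}=v-w\cdot v$ which is harmless because it is $w$-fixed, and a pro-unipotent part killed by $H^1$ vanishing exactly as in Lemma~\ref{c3}. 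Once that reconciliation is in place, everything else is the formal extraction of \eqref{gx4} described above.
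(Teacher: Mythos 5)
Your second paragraph is essentially the paper's argument for part~\eqref{gx4}: take $g$ from Lemma~\ref{vandermonde}, set $t=-\tau(g)g^{-1}\in\ft_\Q$, compute $\tau(z^tg)(g^{-1}z^{-t})=0$ to get $h=z^tg\in G$, use Lemma~\ref{lem1}/\ref{fs} for the compatibility of $y$, and a Galois computation for $\fs(0)=\Ad(h)(\ft^w)$. The problem is your first paragraph, which is where you claim to establish part~\eqref{gx2}, and which is both unnecessary and gappy. It is unnecessary because \eqref{gx2} falls out of \eqref{gx4} for free: once $g=z^{-t}h$ with $h\in G$ and $t\in\ft_\Q$, one has $g^{-1}\sigma(g)=h^{-1}\bigl(z^t\sigma(z^{-t})\bigr)h=\Ad(h^{-1})(e^{-2\pi i t})$, a constant, finite-order element of $N$ representing $w$ and diagonalized by $h$ --- this is exactly how the paper gets \eqref{gx2}, and it also supplies the "$h$ diagonalizes $g^{-1}\sigma(g)$" clause of \eqref{gx4} that you assert without computation. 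It is gappy because the reconciliation of the diagonalizers $f$ (from Lemma~\ref{vandermonde}) and $q$ (from Lemma~\ref{c3}) does not go through as written: (a) twisting $f$ by $z^v$ changes $-\tau(g)g^{-1}$ by $\Ad(f)(v)$, not by $v$, and for an \emph{arbitrary} discrepancy $v$ there is no reason for this to land in $\fs(0)$; in Lemma~\ref{vandermonde} the twist $v$ is chosen specifically to solve $\tau(\tw)\tw^{-1}=v-w\cdot v$, so the identity you cite does not control a generic $z^v$-discrepancy; (b) the assertion that Tits' theorem lets you arrange the output $n_0$ of Lemma~\ref{c3} to have finite order is unsubstantiated --- modifying $q$ within its $T(E)$-coset changes $n_0$ by a $w$-twisted coboundary, and you give no argument that the twisted class of $n_0$ contains a finite-order element (you would essentially be re-proving what the $z^{-t}h$ decomposition gives for free); and (c) you never verify that the various adjustments preserve property~\eqref{gx3}.

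There is also a slip at the end: in the chain $\fs(0)=(\Ad(g)\ft(E)(0))^\Gamma=(\Ad(h)\Ad(z^{-t})\ft)^\Gamma$ you have reversed the order of conjugation; since $g=z^{-t}h$, the correct expression is $\Ad(z^{-t})\Ad(h)(\ft)$, and $\Ad(z^{-t})$ does not visibly act trivially on $\Ad(h)(\ft)$. The paper handles this by first showing $\Ad(g)(X)\in\ft$ for $X\in\ft$ (it is killed by $\tau+\ad(\ty)$ and lies in $\fs(E)(0)$), and only then concluding $\Ad(h)(X)=\Ad(z^t)\Ad(g)(X)=\Ad(g)(X)$ because $z^t\in T(E)$ centralizes $\ft$. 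Your conclusion is salvageable, but the step "using $\Ad(z^{-t})\ft=\ft$" as written does not apply, and it is precisely this step that proves $h$ carries $\ft^w$ \emph{into} $\ft$, which is part of what \eqref{gx4} asserts.
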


As explained in Section~\ref{sec:prelim}, the element $z^{-t}$ depends
on a choice of uniformizer for $E$.  However, for any such choice $h=z^t g$ will satisfy
the properties in part~\eqref{gx4}.

\begin{rmk}One may think of the element $g$ described above as a
  generalized Vandermonde matrix.  Consider the case where $G = \GL_n$
  and $\gamma$ is the orbit of the Coxeter element.  Let $\xi$ be a
  primitive $n^{th}$ root of unity and choose $u \in E$ to be an
  $n^{th}$ root of $z$.  Then, it is possible to choose $g$ to be a
  conjugate to the matrix with entries $\left(\xi^{(i-1) j}
    u^{i-1}\right)$.  Here, $t$ is the diagonal matrix with entries
  $(0, 1/n, \ldots, (n-1)/n)$ corresponding to the barycenter of the
  fundamental chamber in $\Ao$.  An explicit embedding of the Coxeter
  torus satisfying the properties in the theorem is given by setting
  $S=F[\varpi]^\times$, where $\varpi=z
  e_{n1}+\sum_{i=1}^{n-1}e_{i(i+1)}$.  More generally, one obtains a
  good embedding of any maximal torus by taking a block-diagonal
  embedding of Coxeter tori.
\end{rmk}

\begin{proof}
  We note that Lemma~\ref{vandermonde} implies that there exists $g\in
  G(E)$ satisfying parts \eqref{gx1} and \eqref{gx3} and such that
  $g^{-1} \s (g)\in N(E)$ is a coset representative for $w \in N/T$.
  Assuming $g=z^{-t}h$ as in part~\eqref{gx4}, $g^{-1} \s
  (g)=\Ad(h^{-1})e^{-2\pi i t}\in N$ has finite order and is
  diagonalized by $h$.

  It remains to prove the rest of part~\eqref{gx4}.  Let $t = -(dg)
  g^{-1} \in \ft(\Q)$.  We first prove that $g = z^{-t} h$ for some $h
  \in G$.  Indeed,
\begin{equation*}
\tau(z^{t} g)  (g^{-1} z^{-t}) = \Ad(z^{t}) \left(t+\tau(g)g^{-1} \right) = 0.
\end{equation*}
It follows that $h = z^{t} g \in G$.  Since $t = - \tau (g)g^{-1} \in
\tx + \fs(0)$, Lemma~\ref{fs} implies that $y$ is graded compatible
with $\fs$.

Finally, suppose that $X\in\ft$.  Computing, we get $(\tau+\ad(\ty))
(\Ad(g) (X))=(\tau+\ad(t)) (\Ad(g) (X))=0$, so $\Ad(g)
(X)\in\fs(E)(0)\subset \ft(E)(0)=\ft$.  In particular, since $z^t\in
T(E)$, $\Ad(h)(X)=\Ad(g)(X)$.  Moreover, $\s( \Ad(g)
(X))=\Ad(g)(w\circ\s(X))=\Ad(g)(wX)$, so $\Ad(g)
(X)\in\fs(0)=(\fs(E)(0))^\Gamma$ if and only if $X\in\ft^w$.  It follows
that $\fs(0)=\Ad(g)(\ft^w)=\Ad(h)(\ft^w)$.
\end{proof}

 We can now identify the set of points in $\Ao$ which are compatible
 with some maximal torus of type $\g$. We denote this set by $\Pi_\g$.
 Note that this set is nonempty by Theorem~\ref{torcon}.  A similar
 problem for $p$-adic classical groups is considered in \cite{BroSt}.

 \begin{cor}\label{strongcompat}
   Let $\g$ denote a conjugacy class in $W$, and choose $x\in \Ao$ and
   $\fs$ of type $\g$ such that $x$ is graded compatible with $\fs$
   and $\tx\in\ft_\Q$.  Then, $\Pi_\g =\{y\in\Ao\mid \ty\in \hW(\tx +
   \fs(0))\}$.   
\end{cor}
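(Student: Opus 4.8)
The plan is to prove both inclusions of $\Pi_\g = \{y \in \Ao \mid \ty \in \hW(\tx + \fs(0))\}$, using Theorem~\ref{torcon}/\ref{gx} to normalize $\fs$, Theorem~\ref{compthm} to pass from compatibility to graded compatibility, and Lemma~\ref{fs} together with Lemma~\ref{bnorm} to move points around via the affine Weyl group. For the containment $\supseteq$, suppose $\ty = w_0 \cdot (\tx + s)$ for some $w_0 \in \hW$ and $s \in \fs(0)$, and choose $n \in \hN$ representing $w_0$. Let $x'$ be the point of $\Ao$ with $\widetilde{x'} = \tx + s$; by Lemma~\ref{fs} (using that $x$ is graded compatible with $\fs$ and $s \in \fs(0)$), $x'$ is graded compatible with $\fs$, hence in particular compatible with $\fs$. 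Now conjugate by $n$: the point $y = n x'$ satisfies $\ty = w_0 \cdot \widetilde{x'}$ provided this lies in $\ft_\R$ (which it does, being in $\hW(\tx+\fs(0))$ and using Convention~\ref{conv}), and $y$ is compatible with the torus $n S n^{-1}$, which is again of type $\g$. Thus $y \in \Pi_\g$.

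For the reverse containment $\subseteq$, let $y \in \Ao$ be compatible with some Cartan subalgebra $\fs'$ of type $\g$. I would like to say $\fs'$ is $\hG$-conjugate to $\fs$, which is true since they have the same type, but the subtlety is that the conjugating element need not fix $\Ao$ or behave well with respect to $y$; this is where the real work lies. The strategy: first apply Theorem~\ref{compthm} to replace $\fs'$ by a $\hG_y$-conjugate $q^{-1}\fs' q$ that is \emph{graded} compatible with $y$ — this does not change $\Pi_\g$-membership and only conjugates $\fs'$ by an element of $\hG_y$, so the new algebra is still of type $\g$. So WLOG $\fs'$ is graded compatible with $y$ and of type $\g$. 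By Theorem~\ref{gx} applied to both $(\fs, x)$ and $(\fs', y)$, there are $w$-diagonalizers $g = z^{-t} h$ and $g' = z^{-t'} h'$ with $t = -\tau(g)g^{-1} \in (\tx + \fs(0)) \cap \ft_\Q$ and $t' \in (\ty + \fs'(0)) \cap \ft_\Q$, and $\fs(0) = \Ad(h)(\ft^w)$, $\fs'(0) = \Ad(h')(\ft^w)$ for the \emph{same} $w$ (both of type $\g$, both normalized so $g^{-1}\sigma(g) = n'$ is the fixed finite-order Tits representative). Then $g' g^{-1}$ conjugates $\fs$ to $\fs'$ and, since $g'^{-1}\sigma(g') = g^{-1}\sigma(g)$, the element $g' g^{-1}$ is Galois-fixed, i.e. $g'g^{-1} \in \hG$. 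Writing $g' g^{-1} = z^{-t'} (h' h^{-1}) z^{t}$ and analyzing its component in $G$ versus the $z$-power part, I expect $h' h^{-1}$ normalizes $\ft$ modulo $T$ (it conjugates $\ft^w$ to $\ft^w$), giving an element $n_1 \in \hN$ with $n_1$ representing some $w_1 \in \hW$ and $\Ad(n_1)$ carrying the filtration data of $y$ to that of $x$; then Lemma~\ref{bnorm} identifies $w_1 \cdot \ty$ with $\widetilde{t}$ up to the action, and chasing $t \in \tx + \fs(0)$ through yields $\ty \in \hW(\tx + \fs(0))$.

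The main obstacle is precisely the bookkeeping in this last paragraph: controlling the conjugating element $g'g^{-1}$ well enough to extract a genuine element of the relative affine Weyl group $\hW = \hN/\hT$ sending $\ty$ into $\hW(\tx + \fs(0))$, while keeping track of the fact that $\fs(0)$ and $\fs'(0)$ are different subspaces of $\hfg$ (conjugate in $G$ but not equal). The clean way to organize it is: (i) reduce to graded compatibility on both sides via Theorem~\ref{compthm}; (ii) use Theorem~\ref{gx} to produce the two Vandermonde-type diagonalizers with a \emph{common} $w$ and common Tits representative $n'$; (iii) observe $g'g^{-1} \in \hG$ and decompose it to isolate an element of $\hN$; (iv) apply Lemma~\ref{bnorm} and Lemma~\ref{fs} to translate the relation $t \in \tx + \fs(0)$, $t' \in \ty + \fs'(0)$ into the desired $\hW$-orbit statement. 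Steps (i), (ii), (iv) are essentially citations of results already in the excerpt; step (iii) is the one genuinely new computation, and it should come down to the same kind of Iwasawa-decomposition argument used in the proof of Lemma~\ref{lem1}.
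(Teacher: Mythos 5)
Your forward inclusion $\hW(\tx + \fs(0)) \subseteq \Pi_\g$ is essentially the paper's argument and is fine. The reverse inclusion is where the real work lies, and your sketch has two genuine gaps. First, you assume you can normalize both diagonalizers so that $g^{-1}\sigma(g) = g'^{-1}\sigma(g')$. Theorem~\ref{gx}\eqref{gx2} only asserts that each of $n := g^{-1}\sigma(g)$ and $n' := g'^{-1}\sigma(g')$ is \emph{some} finite-order representative of $w$ in $N$; there is no reason you can force them to coincide, and in general they differ by a nontrivial $\tz \in T$ with $n = n'\tz$. Consequently $\sigma(g'g^{-1}) = g'\,n'n^{-1}g^{-1}$ need not equal $g'g^{-1}$, so $g'g^{-1}$ need not lie in $\hG$ and step (iii) collapses. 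Second, even if one grants $n=n'$, your claim that $h'h^{-1}$ ``normalizes $\ft$ modulo $T$'' because ``it conjugates $\ft^w$ to $\ft^w$'' is wrong on both counts: Theorem~\ref{gx}\eqref{gx4} gives $\Ad(h)(\ft^w) = \fs(0)$ and $\Ad(h')(\ft^w) = \fs'(0)$, so $\Ad(h'h^{-1})$ carries $\fs(0)$ to $\fs'(0)$, two generally distinct subalgebras of $\ft$, not $\ft^w$ to itself; and even an element preserving $\ft^w$ need not normalize all of $\ft$. There is no shortcut from ``conjugates one Cartan-piece to another'' to ``lies in $\hN$.''

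The paper's proof is built precisely around these two obstructions. It writes $n = n'\tz$, decomposes $\tz = \de_w(\zeta_1)\zeta_2$ via Lemma~\ref{Tdecomp}, and from $\zeta_1$ builds $q = h\zeta_1 h'^{-1}$ conjugating $\zeta' S'^\flat$ to $\zeta S^\flat$, where $S^\flat, S'^\flat\subset T$ are the subgroups generated by rational cocharacters of $S, S'$ and $\zeta = e^{-2\pi i t}$, $\zeta' = e^{-2\pi i t'}$. It then uses that $Z(\zeta' S'^\flat)^0$ is reductive and contains both maximal tori $T$ and $q^{-1}Tq$ to find $p$ in this centralizer with $qp^{-1}\in N$, acting on $\zeta' S'^\flat$ exactly as $q$ does; the resulting $u\in W$ yields $s = e^{2\pi i(u(t')-t)}$ and hence $t \in u(t') + X_*(T) + \fs(0)$, from which $\ty \in \hW(\tx + \fs(0))$ follows. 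Your outline will not reach the conclusion without supplying arguments for these two steps; they are the content of the proof, not bookkeeping.
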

Theorem~\ref{compthm} shows the existence of $x$ and $\fs$ as in the
statement of the theorem.  Note that this guarantees that $\tx\in\ft$
for any base field $k$.
\begin{rmk} Theorem~\ref{compthm} shows that $\Pi_\g$ can also be
  characterized as the set  of $y\in\Ao$ graded compatible with some
  conjugate of $\fs$.
\end{rmk}
\begin{proof} Applying Theorem~\ref{compthm}, assume without loss of
  generality that $x$ is graded compatible with $S$.  Lemma~\ref{fs}
  then implies that any point $y$ with $\ty \in \tx + \fs(0)$ is
  (graded) compatible with $S$.  Now, suppose that $v \in \hW$.  We
  may choose a representative $m = z^t n$ for $v$, where $t \in \ft_\Z
  \cong X_*(T)$ and $n \in N$.  Observe that $\tau (m) m^{-1} \in
  \ft_\Q$, so Lemma~\ref{bnorm} implies that $\Ad(m) (\fg_x(i)) =
  \fg_{v x} (i)$.  We deduce that $vx$ is graded compatible with
  $\Ad(m) (S)$.  This shows that $\hW(\tx + \fs(0))\subset\Pi_\g$.

  Now, choose $x'\in\Pi_\g$, and let $\fs'$ be a Cartan subalgebra of
  type $\g$ graded compatible with $x'$.  Choose a representative $w
  \in W$ for $\g$.  By Theorem~\ref{gx}\eqref{gx4}, there exists a
  $w$-diagonalizer $g = z^{-t} h$ (resp. $g' = z^{-t'} h'$) for $S$
  (resp. $S'$) such that $h$ simultaneously diagonalizes $g^{-1}
  \s(g)$ and $\ft^w$ (resp.  $h'$, $(g')^{-1} \s(g')$).  We shall
  write $\zeta = z^{t} \s(z^{-t}) = e^{-2 \pi i t} \in T$ and $\zeta'
  = z^{t'} \s(z^{-t'})$.  We also set $n = h^{-1} \zeta h = g^{-1} \s
  (g) \in N$ and similarly for $n'$.  They are both representatives
  for $w$, so we can take $\tz\in T$ such that $n=n'\tz$.

  Let $S^\flat \subset T$ be the subgroup generated by rational
  cocharacters of $S$.  One may deduce from
  Theorem~\ref{gx}\eqref{gx4} that $h T^{w} h^{-1} = S^\flat$.  We now
  show that $\zeta \in (W\cdot \zeta') S^{\flat}$.  Applying
  Lemma~\ref{Tdecomp}, write $\tz = \de_w(\zeta_1) \zeta_2$, with
  $\zeta_2 \in T^w$.  In particular, $n = (\zeta_1 n' \zeta_1^{-1})
  \zeta_2$.  Moreover,
\begin{equation*}
\begin{aligned}
(h \zeta_1) n' (h \zeta_1)^{-1} &= 
(h \zeta_1) n' \zeta_2 (\zeta_2)^{-1}(h \zeta_1)^{-1} \\
& = (h \zeta_1) n' \zeta_2 (h \zeta_1)^{-1} h \zeta_2^{-1} h^{-1} \in \zeta S^{\flat}.
\end{aligned}
\end{equation*}
The last line follows from the observation above that $s = h
\zeta_2^{-1} h^{-1} \in h T^{w} h^{-1} = S^\flat$.  Thus, $q=h\zeta_1
h'^{-1}$ conjugates $\zeta'$ to $\zeta s$.  We further note that
$qS'^{\flat}q^{-1}=h\zeta_1 T^w \zeta_1^{-1}h^{-1}=h T^w
h^{-1}=S^\flat$.  This implies that $q\zeta' S'^\flat q^{-1}=\zeta
S^\flat$.  The connected centralizer $Z(\zeta' S'^\flat)^0$ is
reductive.  Moreover, it contains the maximal tori $T$ and $q^{-1}Tq$,
so there exists $p\in Z(\zeta' S'^\flat)^0$ such that
$pq^{-1}Tqp^{-1}=T$.  It follows that $qp^{-1}\in N$ and that
conjugation by $qp^{-1}$ and $q$ coincide on $\zeta' S'^\flat$.  If we
let $u\in W$ be the image of $q$, we see that $u (\zeta') = \zeta s$
and $u (S'^\flat) = S^\flat$.

Finally, it is evident that $s = e^{2 \pi i (u(t')-t)}$ and therefore
$t \in u(t') +X_*(T) + \fs(0)$.  Since $\tx \in t + \fs(0)$, $\tx' \in
t' + \fs'(0)$, and $u (\fs'(0)) = \fs(0)$, we conclude that $\tx'$
lies in $\hW(\tx + \fs(0))$.
\end{proof}

\begin{lem}\label{Tdecomp}
Given $w \in W$, let $\de_w : T \to T$ be the homomorphism $\de_w(t) = w^{-1}(t) t^{-1}$.
Then, $T$ has the direct product decomposition $T=\de_w(T) T^w$. 
\end{lem}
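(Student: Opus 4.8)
The plan is to analyze the homomorphism $\de_w : T \to T$ and show that its image, together with $T^w$, fills out all of $T$, and that the two subgroups generate $T$ with at most finite overlap — but since $T$ is connected, ``at most finite overlap'' upgrades to a genuine (internal) direct product decomposition $T = \de_w(T)\, T^w$. The natural setting is the cocharacter lattice $X_*(T)$, or equivalently the $\Q$-vector space $V := X_*(T) \otimes_\Z \Q$, on which $w$ acts as a finite-order linear automorphism. First I would pass to $V$ and study the $\Q$-linear endomorphism $\de_w^V := w^{-1} - \id$ (additively). Because $w$ has finite order, say $m$, it is diagonalizable over $\bar{\Q}$ with eigenvalues roots of unity; the eigenvalue $1$ contributes the fixed subspace $V^w$, and the complementary $w$-stable subspace $V'$ (the sum of the nontrivial eigenspaces) is exactly where $w^{-1}-\id$ is invertible. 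Hence $V = V^w \oplus \im(\de_w^V)$ and $\im(\de_w^V) = V'$, so additively the decomposition is clear.

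The second step is to descend this vector-space splitting to the lattice and then to the torus. On $X_*(T)$ the map $\de_w$ need not be surjective onto a saturated sublattice, but its image has finite index in $V' \cap X_*(T)$; combined with $X_*(T)^w = V^w \cap X_*(T)$, one gets that $\de_w(X_*(T)) + X_*(T)^w$ has finite index in $X_*(T)$. Translating back to $T = X_*(T) \otimes_\Z k^\times$ (or more invariantly, using that $T$ is a torus over an algebraically closed field of characteristic $0$, hence divisible), the subgroups $\de_w(T)$ and $T^w$ are each connected (images/fixed points of tori under algebraic group homomorphisms), their product $\de_w(T) \cdot T^w$ is a connected subgroup of $T$, and the finite-index statement on cocharacters forces this product to be all of $T$. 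For the direct product claim, I would check $\de_w(T) \cap T^w$ is finite from the vector-space decomposition $V = V^w \oplus V'$ (the intersection corresponds to a subgroup whose cocharacter lattice sits in $V^w \cap V' = 0$), and then — again invoking connectedness of $T$ and divisibility — observe that a connected subgroup has no proper finite-index subgroups, so one can realize the product as internal direct: writing $t \in T$ as $\de_w(s)\cdot t_0$ with $t_0 \in T^w$ is possible because divisibility lets us solve $\de_w(s) = t t_0^{-1}$ after choosing $t_0$ appropriately in the $T^w$-component.

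Concretely and more cleanly: since $k$ is algebraically closed of characteristic $0$, $T \cong (k^\times)^{\dim T}$ is a divisible abelian group, and $\de_w, \, \id - (\text{projection to }T^w)$ can be handled by the structure of finite-order automorphisms of divisible groups. The cleanest route is to observe that the endomorphism $\Phi := \sum_{i=0}^{m-1} w^{-i}$ of $T$ (the ``norm'') has image $T^w$ and that $\ker\Phi$ contains $\de_w(T)$; a short computation with $(w^{-1}-\id)\Phi = w^{-m} - \id = 0$ and a Bézout-type identity in $\Z[w]/(w^m - 1)$ — namely that $1$ lies in the ideal generated by $(w^{-1}-1)$ and $(1 + w^{-1} + \cdots + w^{-(m-1)})$ up to an integer $N$ — gives $t^N \in \de_w(T)\cdot T^w$ for all $t$, and divisibility of $T$ removes the $N$. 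I expect the main obstacle to be precisely this last point: the naive $\Z$-linear algebra only yields the decomposition up to finite index (the integer $N$ above, essentially $m$ or a divisor thereof), and one must genuinely use that $T$ is a connected algebraic torus over a field of characteristic $0$ — hence divisible as an abstract group — to clear the denominators and obtain the honest equality $T = \de_w(T)\, T^w$. Everything else is routine linear algebra over $\Q$ with the finite-order operator $w$.
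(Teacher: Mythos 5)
Your argument is correct and rests on the same core linear algebra as the paper's: the paper observes in one line that $\Lie(\de_w(T))=\{w^{-1}X-X : X\in\ft\}$ and $\ft^w$ are complementary in $\ft$ and stops there, leaving the descent to the group implicit via the usual ``connected subgroup with full Lie algebra equals the whole group'' principle. You do the identical eigenspace decomposition, just on $X_*(T)\otimes\Q$ rather than on $\ft$, and then descend by appealing to divisibility of $T$ (or the norm/B\'ezout variant); these two descent mechanisms are the standard interchangeable packagings of the same fact in characteristic $0$, so I would call this essentially the paper's proof, spelled out more. One small slip: you assert that $T^w$ is connected because it is a fixed-point set, but that is false in general --- for $G=\SL_2$ and $w$ the nontrivial Weyl element, $T^w=\mu_2$ has two components (and indeed $\de_w(T)\cap T^w=\mu_2\neq 1$, so the ``direct product'' wording of the lemma is itself a slight overstatement; both your argument and the paper's really prove the surjectivity $T=\de_w(T)\,T^w$, which is all that is used in Corollary~\ref{strongcompat}). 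The fix is cosmetic: work with $(T^w)^0$, which has the same Lie algebra and cocharacter space, and the product $\de_w(T)\cdot(T^w)^0$ is already all of $T$.
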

\begin{proof}
This follows immediately from the obvious fact that $\Lie(\de_w(T))=\{w^{-1} (X) - X\mid X \in
\ft\}$ and $\ft^w$ are complementary subspaces of $\ft$.
\end{proof}

\section{Regular Strata} \label{sec:RS}

In this section, we introduce a class of strata with regular
semisimple ``leading term''.  We will first do so only for strata
coming from the standard apartment.

If $\tb\in\hfg^\vee$, we let $Z(\tb)$ (resp. $Z^0(\tb)$) denote the
stabilizer (res. connected stabilizer) of
$\tb$ in $\hG$ under the coadjoint action.  The corresponding Lie
algebra $\fz(\tb)$ is the stabilizer of the Lie algebra action.

\begin{defn}
  A stratum $(x, r, \b)$ with $x\in\Ao$ is \emph{graded regular} if
  $Z^0(\tbo)$ is a maximal torus which is compatible with $x$.
  We call $Z^0(\tbo)$ the connected centralizer of the stratum.
\end{defn}
It is of course equivalent to check that the Lie centralizer
$\fz(\tbo)$ of the stratum is a Cartan subalgebra compatible with $x$.
It is obvious that a graded regular stratum is fundamental.

Given a torus $S$, we write $\rho_\fs : \hfg^\vee \to \fs^\vee$ for
the restriction map.  If $\fs$ is compatible with $x\in\B$,
$\rho_\fs(\hfg^\vee_{x,r}))\subset\fs^\vee_r$ and
$\rho_\fs(\hfg^\vee_{x,r+}))\subset\fs^\vee_{r+}$ for all
$r$. Moreover, if $\fs$ is graded compatible with $x\in\Ao$, then $\rho_\fs(\hfg^\vee_{x}(r))\subset\fs^\vee(r)$.

\begin{lem}[Tame Corestriction]\label{cores} 
Take $x\in \Ao$, and let $(x, r, \b)$ be a graded regular stratum with
connected centralizer $S$.  Then, there is a morphism of $\fs$-modules $\pi_\fs : \hfg \to \fs$ satisfying the following properties:
\begin{enumerate}%[label={\textnormal{(\arabic*)}},ref={\thelem(\arabic*)}]
\item\label{cores1} $\pi_\fs$ restricts to the identity on $\fs$;
\item\label{cores2} $\pi_\fs(\hfg_{x,\ell}) = \fs_\ell$ and 
$\pi_\fs^*(\fs_{\ell}^\vee) \subset \hfg_{x, \ell}^\vee$;
\item\label{cores3} the kernel of the restriction map
\begin{equation*}
\brho_{\fs,\ell} : (\pi_\fs^*(\fs^\vee )+ \hfg^\vee_{x,\ell-r}) / \hfg^\vee_{x,(\ell-r)+} \to 
\fs^\vee / \fs^\vee_{(\ell-r)+}
\end{equation*}
is given by the image of $\ad^*(\hfg_{x,\ell}) (\tb)$ modulo
$\hfg^\vee_{x, (\ell-r)+}$, where $\tb\in \hfg^\vee_{x, -r}$ is any
representative of $\b$;
\item\label{cores4} if $Z \in \fs$ and $X \in \hfg$, then 
$\left< Z, X \right>_\ddz = \left<Z, \pi_\fs (X) \right>_\ddz$;
\item\label{cores5} $\pi_\fs$ (resp. $\pi_\fs^*$) commutes with the
  adjoint action of the normalizer $N(S)$ of $S$; and
\item\label{cores6} the image $\pi_\fs^*(\fs_\ell^\vee) $ consists
of those elements in $\hfg_{x, \ell}^\vee$ stabilized by
$S$.
\end{enumerate}
\end{lem}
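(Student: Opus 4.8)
The plan is to construct $\pi_\fs$ explicitly using the invariant bilinear form and then verify the six properties, most of which reduce to facts about the Moy--Prasad filtrations already collected in Propositions~\ref{ev} and~\ref{dual}. Since $\fs = \fz(\tbo)$ is a Cartan subalgebra compatible with $x$, after applying Theorem~\ref{compthm} we may assume (replacing $S$ by a $\hG_x$-conjugate, which changes nothing in the statement by the equivariance $\pi_{\Ad(g)(\fs)} = \Ad(g)\circ\pi_\fs\circ\Ad(g^{-1})$) that $x$ is graded compatible with $\fs$. Then I would \emph{define} $\pi_\fs$ to be the orthogonal projection of $\hfg$ onto $\fs$ with respect to $\left<,\right>$; equivalently, $\pi_\fs$ is the composite $\hfg \xrightarrow{\sim} \hfg^\vee \xrightarrow{\rho_\fs} \fs^\vee \xrightarrow{\sim} \fs$, where the outer maps use the nondegenerate form and $\rho_\fs$ is the restriction. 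Since $\left<,\right>$ restricts to a nondegenerate form on $\fs$ (because $\fs$ is a Cartan subalgebra), this is well-defined, and $\pi_\fs^*: \fs^\vee \to \hfg^\vee$ is precisely the inclusion dual to $\rho_\fs$, i.e.\ it is identified with $\rho_\fs^*$ under the form identifications. Property~\eqref{cores1} is then immediate, property~\eqref{cores4} is just the definition of orthogonal projection, and property~\eqref{cores5} follows because the normalizer $N(S)$ preserves $\fs$ and acts by isometries of $\left<,\right>$, hence commutes with orthogonal projection onto $\fs$.

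Next I would address the filtration compatibilities~\eqref{cores2}. The inclusion $\pi_\fs^*(\fs_\ell^\vee) \subset \hfg^\vee_{x,\ell}$ and $\rho_\fs(\hfg^\vee_{x,r}) \subset \fs^\vee_r$ are noted just before the lemma and follow from compatibility of $x$ with $\fs$ together with Proposition~\ref{dual}. Using the form identification $\hfg^\vee_{x,-\ell} \cong \hfg_{x,-\ell}$ from Proposition~\ref{dual}\eqref{dual4} and the analogous statement for $\fs$, the map $\pi_\fs$ on the lattice $\hfg_{x,\ell}$ corresponds to $\rho_\fs$ on $\hfg^\vee_{x,\ell}$, so $\pi_\fs(\hfg_{x,\ell}) \subset \fs_\ell$; to get equality I would use property~\eqref{cores1}, namely $\pi_\fs|_\fs = \id$, which shows $\fs_\ell = \pi_\fs(\fs_\ell) \subset \pi_\fs(\hfg_{x,\ell})$. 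For property~\eqref{cores6}, note that $\hfg_{x,\ell}^\vee$ carries an action of $S$ (or rather of its bounded subgroup, and at the graded level of $\hG_x/\hG_{x+}$), and since $\fs$ is abelian the subspace fixed by $S$ is exactly the part supported on the zero weight spaces for the $S$-action; because $x$ is graded compatible with $\fs$, the decomposition of $\hfg$ into $\fs$-weight spaces is compatible with the grading, and the $S$-fixed part of $\hfg_{x,\ell}^\vee$ is precisely $\pi_\fs^*(\fs_\ell^\vee)$. Here I would spell out that $\pi_\fs^*(\fs^\vee)$ is the annihilator of $\bigoplus_{\alpha}\fu_\alpha$ (the sum of nontrivial $\fs$-weight spaces) intersected with the lattice, which is exactly the $S$-invariants.

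The main obstacle is property~\eqref{cores3}, the computation of the kernel of $\brho_{\fs,\ell}$. The source of $\brho_{\fs,\ell}$ is $(\pi_\fs^*(\fs^\vee) + \hfg^\vee_{x,\ell-r})/\hfg^\vee_{x,(\ell-r)+}$ and the target is $\fs^\vee/\fs^\vee_{(\ell-r)+}$; the map sends a class $\pi_\fs^*(\mu) + \eta$ (with $\eta \in \hfg^\vee_{x,\ell-r}$) to $\mu + \rho_\fs(\eta) \bmod \fs^\vee_{(\ell-r)+}$, using property~\eqref{cores2} to see $\rho_\fs(\eta) \in \fs^\vee_{\ell-r}$. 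So an element of the kernel is a class of the form $\pi_\fs^*(\mu) + \eta$ with $\eta \in \hfg^\vee_{x,\ell-r}$ and $\mu = -\rho_\fs(\eta) \bmod \fs^\vee_{(\ell-r)+}$; adjusting $\eta$ within its coset one reduces to classes of the form $\eta - \pi_\fs^*(\rho_\fs(\eta)) = (\id - \pi_\fs^*\rho_\fs)(\eta)$, i.e.\ the kernel is the image in the quotient of $(\id - \pi_\fs^*\rho_\fs)(\hfg^\vee_{x,\ell-r})$, which is the part of $\hfg^\vee_{x,\ell-r}$ orthogonal to $\fs$. The content of~\eqref{cores3} is that this orthogonal part, modulo $\hfg^\vee_{x,(\ell-r)+}$, coincides with $\ad^*(\hfg_{x,\ell})(\tb) \bmod \hfg^\vee_{x,(\ell-r)+}$. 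For this I would pass to the graded pieces and work with the homogeneous representative $\tbo \in \hfg^\vee_x(-r)$; since $\fs = \fz(\tbo)$, the graded centralizer of $\tbo$ inside $\hfg_x(\ell)$ is $\fs(\ell)$, so $\ad^*(\hfg_x(\ell))(\tbo)$ has dimension $\dim\hfg_x(\ell) - \dim\fs(\ell)$, which matches the dimension of the orthogonal complement of $\fs^\vee(\ell-r)$ inside $\hfg^\vee_x(\ell-r)$. To get the actual equality rather than just matching dimensions, I would invoke the invariance identity $\left< \ad(Y)(\tbo), Z\right> = -\left<\tbo, \ad(Y)(Z)\right>$ for $Y \in \hfg_x(\ell)$, $Z \in \fs$: since $\tbo$ kills $[\hfg,\fs]$ at the relevant graded level (because $\fz(\tbo) = \fs$ forces $\tbo$ to vanish on the image of $\ad(Z)$ for the relevant reasons — this is where regularity of the stratum and the graded-semisimplicity of $\tbo$ enter), $\ad^*(\hfg_x(\ell))(\tbo)$ lands in the annihilator of $\fs$, hence in the orthogonal complement; a dimension count then forces equality. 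Lifting from graded pieces back to the lattice statement is routine using that the critical numbers are discrete. I expect the delicate point to be justifying carefully why $\tbo$ annihilates $[\fs, \hfg_x(\ell - r)]$ — i.e.\ that the orthogonal complement of $\fs$ is $\ad^*$-stable under $\fs$ and paired trivially with $\tbo$ — which is exactly the infinitesimal version of $\fs$-invariance of $\tbo$ coming from $\fs \subseteq \fz(\tbo)$.
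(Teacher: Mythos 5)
Your proposal follows essentially the same route as the paper through properties~\eqref{cores1}, \eqref{cores2}, \eqref{cores4}, \eqref{cores5}, and \eqref{cores6}, and the opening reduction (replacing $S$ by a $\hG_x$-conjugate and using the $N(S)$-equivariance of orthogonal projection) is exactly right. The weak point is in property~\eqref{cores3}.

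You correctly identify the kernel of $\brho_{\fs,\ell}$ as the image modulo $\hfg^\vee_{x,(\ell-r)+}$ of the part of $\hfg^\vee_{x,\ell-r}$ orthogonal to $\fs$, correctly note that $\ad^*(\hfg_{x,\ell})(\tbo)$ lands inside that kernel (because $\fs$ stabilizes $\tbo$), and correctly compute $\dim\bigl(\ad^*(\hfg_x(\ell))(\tbo)\bigr) = \dim\hfg_x(\ell) - \dim\fs(\ell)$ from $\fz(\tbo)=\fs$. The gap is the sentence where you assert that this number ``matches the dimension of the orthogonal complement of $\fs^\vee(\ell-r)$ inside $\hfg^\vee_x(\ell-r)$,'' i.e.\ that
\[
\dim\hfg_x(\ell) - \dim\fs(\ell) \;=\; \dim\hfg^\vee_x(\ell-r) - \dim\fs^\vee(\ell-r).
\]
This is not automatic: for irrational or nonintegral shifts, $\dim\hfg_x(\ell)$ and $\dim\hfg_x(\ell-r)$ can genuinely differ at individual critical numbers, so the claim needs an argument. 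The paper closes exactly this hole by \emph{summing the inequality over a full period}: the containment gives $\dim C_j \le \dim\ker(\brho_{\fs,j})$ for every $j$, and because $\sum_{\ell\le j<\ell+1}\dim\hfg_x(j)=\dim\fg=\sum_{\ell\le j<\ell+1}\dim\hfg^\vee_x(j-r)$ (and similarly for $\fs$), the two sides sum to the same total, forcing equality term by term. Your writeup omits this step entirely and treats the dimension equality as evident. (An alternative direct route would be to observe that the regular homogeneous element $b\in\fs(-r)$ corresponding to $\tbo$ induces, after extending scalars to a splitting field $E$, a graded isomorphism $\ad(b):\hfg(E)_x(\ell)/\fs(E)(\ell)\to\hfg(E)_x(\ell-r)/\fs(E)(\ell-r)$ because $\a(b)$ is a nonzero homogeneous scalar on each root space, and then take $\Gal(E/F)$-invariants; but that argument must itself be spelled out — it uses both homogeneity and regularity of $b$ and the fact that $r$ is a multiple of $1/e$.) The final paragraph of your proposal also points in a slightly wrong direction: the genuine delicacy is not that $\tbo$ annihilates $[\fs,\hfg_x(\ell-r)]$ (that is the easy infinitesimal-invariance observation, which the paper dispatches in one line), but the dimension-matching described above.
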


\begin{rmk}\label{tamermk} The proof will actually show that $\ad^*(\hfg_{x,\ell}) (X)$ modulo
  $\hfg^\vee_{x, (\ell-r)+}$ is in the kernel of $\brho_{\fs,\ell}$
  for any $X\in\pi^*_\fs(\fs^\vee_{-r})+\hfg^\vee_{x,-r+}$.
\end{rmk}
\begin{rmk} We will usually omit the subscript $\ell$ on
  $\brho_{\fs,\ell}$ when it is clear from context.
\end{rmk}
\begin{proof}
  Recall that $\pi_\fs$ is the orthogonal projection of $\hfg$ onto
  $\fs$ with respect to the $F$-bilinear invariant pairing $\left<,
  \right>$ on $\hfg$ obtained by extending scalars.  The invariance of
  the form immediately shows that $\pi_\fs$ is an $\fs$-module map
  that is equivariant with respect to the adjoint action of
  $N(S)$. The first and fourth statements are trivial.

Recall that $F$-duals and smooth $k$-duals of $F$-vector spaces are
identified using the map $\kappa$ described before
Proposition~\ref{dual}.  We will use this identification and the
results of this proposition in the rest of the proof without comment.

Since $x$ is compatible with $\fs$, we have $\fs_\ell \subset
\hfg_{x,\ell}$, so $\pi_\fs(\hfg_{x,\ell}) \supset \fs_\ell$.  Now,
suppose that $Z \in \fs_{-\ell+}$ and $X \in \hfg_{x,\ell}$.  In order
to show that $\pi_{\fs} (X) \in \fs_\ell,$ it suffices to show that
$\left< X, Z \right>_\ddz = 0$.  This follows immediately, since $Z
\in \hfg_{x, -\ell+}=\hfg_{x, \ell}^\perp$.  Also, if
$\a\in\fs_\ell^\vee$, then $\pi_\fs^*(\a)(\hfg_{x,
  \ell+})=\a(\pi(\hfg_{x, \ell+}))=\a(\fs^\vee_{\ell+})=0$, so
$\pi^*_\fs(\fs_\ell^\vee)\subset \hfg_{x, \ell}^\vee$.  This proves
the second statement.

Suppose that $\a\in \fs^\vee$ and $s \in S$.  Then, if $X \in \hfg$,
$\Ad^*(s) (\pi_\fs^*(\a)) (X) = \pi_\fs^*(\a) (\Ad(s^{-1}) (X)) = \a
(\pi_\fs(\Ad(s^{-1}) (X)))$.  The right hand side is equal to
$\pi_\fs^* (\a) (X)$ by part~\eqref{cores5}.  Thus,
$\pi_\fs^*(\fs^\vee)$ is stabilized by $S$.  On the other hand, if $\a
\in \hfg^\vee$ is stabilized by $S$, then, writing $\a=\langle
Y,\cdot\rangle$, one sees that $\Ad(s)(Y)=Y$ for all $s\in S$, i.e.,
$Y\in\fs$.  By part~\eqref{cores4}, $\a(X) = \a(\pi_\fs(X))$, so $\a
\in \pi_\fs^*(\fs)$.  This proves \eqref{cores6}.

Finally, we consider the third statement.  The image of
$\ad^*(\hfg_{x,j})(\tb)$ modulo $\hfg^\vee_{x, (j-r)+}$ is independent
of the choice of representative, so we can take
$\tb=\tbo\in\hfg^\vee_x(-r)$.  Since $S$ stabilizes $\tbo$,
$\ad^*(Z)(\tbo)=0$ for $Z\in\fs$, and it follows that for any
$X\in\hfg$, $\ad^*(X)(\tbo)(Z)=-\ad^*(Z)(\tbo)(X)=0$, i.e.,
$\rho_\fs(\ad^*(\hfg)(\tbo))=0$.  Therefore, $(\ad^*(\hfg_{x,
  j})(\tbo) + \hfg^\vee_{x, (j-r)+})/\hfg^\vee_{x, (j-r)+}) \subset
\ker(\brho_{\fs,j})$. For convenience, we denote the former space by
$C_j$.

We will show that $\dim C_j=\dim\ker(\brho_{\fs,j})$.  Observe that
$\rho_\fs\circ \pi^*_\fs$ is the identity on $\fs^\vee$, so
$\ker(\brho_{\fs,j})=\ker(\hfg^\vee_{x,j-r}/\hfg^\vee_{x,(j-r)+}\to\fs^\vee_{j-r}/\fs^\vee_{(j-r)+})$.
This second map is surjective, so
$\dim\ker(\brho_{\fs,j})=\dim\hfg^\vee_x(j-r)-\dim\fs^\vee(j-r)$.
Next, suppose that $X \in \hfg_{x, j}$ and $\ad^* (X) (\tbo) \in
\hfg_{x, (j-r)+}$.  Since this is always the case if
$X\in\hfg_{x,j+}$, we can assume that $X\in\hfg_x(j)$.  By
Proposition~\ref{ev}, $\ad^*(X) (\tbo) \in \hfg^\vee_{x} (j-r)\cap
\hfg^\vee_{x, (j-r)+} = \{0\}$, so $X\in\fs\cap\hfg_x(j)=\fs(j)$.  We
thus obtain $\dim(C_j)=\dim\hfg_x(j)-\dim\fs(j)$.

We now sum these two dimensions over $j$ in the full period $\ell\le j
<\ell+1$.  Note that summing $j$ over the full period $\ell\le j
<\ell+1$ gives $\sum_{\ell\le
  j<\ell+1}\dim\hfg_x(j)=\dim\fg=\dim\fg^\vee=\sum_{\ell\le
  j<\ell+1}\dim\hfg^\vee_x(j-r)$ and similarly, $\sum_{\ell\le
  j<\ell+1}\dim\fs_x(j)=\sum_{\ell\le j<\ell+1}\dim\fs^\vee_x(j)$.
(Of course, both sides are zero if $\hfg^\vee_x(j-r)=\{0\}$.)  Thus,
$\sum_{\ell\le j<\ell+1} \dim C_j=\sum_{\ell\le
  j<\ell+1}\dim\ker(\brho_{\fs,j})$.  Since each term on the right is
greater or equal to the corresponding term on the left, we get $\dim
C_j=\dim\ker(\brho_{\fs,j})$ for all $j$.  In particular,
$C_\ell=\ker(\brho_{\fs,\ell})$.
\end{proof}

\begin{prop}\label{r=0}
  If $(x, 0, \b)$ is a graded regular stratum with $x\in\Ao$, then
  there exists $m \in \hG_x$ such that $m \cdot (x, 0, \b) = (x, 0, \b')$
  is regular with connected centralizer $\hT$.   The stratum $(x, 0, \b')$
  is uniquely determined by $(x, 0, \b)$ up to the action of
  $(\hN \cap \hG_x)/ \hT_x \cong (N \cap H_x)/T$.
  
\end{prop}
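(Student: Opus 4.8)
\emph{Setup and reduction.} Since $r=0$, the datum $\b$ is a functional on $\hfg_{x,0}/\hfg_{x,0+}$, and via the invariant form (Proposition~\ref{dual}\eqref{dual4}) I identify the homogeneous representative $\tbo\in\hfg^\vee_x(0)$ with an element $Y\in\hfg_x(0)$; under this identification the coadjoint stabilizer $\fz(\tbo)$ becomes the centralizer $\fz_{\hfg}(Y)=\ker(\ad(Y))$, so graded regularity of $(x,0,\b)$ says precisely that $\fz_{\hfg}(Y)$ is a Cartan subalgebra of $\hfg$ (in particular toral, of $F$-dimension $\dim T$) compatible with $x$. Directly from the definition of the $\hG$-action on strata (compare Lemma~\ref{actlem}\eqref{act3}), the action of $\hG_x$ on depth $0$ strata based at $x$ factors through $\theta_x\colon H_x\xrightarrow{\sim}\hG_x/\hG_{x+}$, and under the Lie algebra isomorphism $\fh_x\xrightarrow{\sim}\hfg_x(0)$ induced by $\theta'_x$ (which carries $\ft\subset\fh_x$ onto the constants $\ft\subset\hfg_x(0)$) it becomes the adjoint action of $H_x$ on $Y\in\hfg_x(0)$. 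Write $\bar Y\in\fh_x$ for the preimage of $Y$. Thus the whole statement reduces to conjugating $\bar Y$ by $H_x$.

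\emph{Existence.} The key point is that $Y$ lies in degree $0$, so $\ad(Y)$ is $F$-linear and preserves every graded piece $\hfg_x(s)$ (Proposition~\ref{ev}), whence $\ker(\ad(Y)|_{\hfg_x(s+m)})=z^m\ker(\ad(Y)|_{\hfg_x(s)})$ for $m\in\Z$. Summing dimensions over one fundamental period, the total equals $\dim_F\fz_{\hfg}(Y)=\dim T$, while the degree-$0$ summand $\mathfrak{c}:=\ker(\ad(Y)|_{\hfg_x(0)})$ is the centralizer of $Y$ in the reductive Lie algebra $\hfg_x(0)\cong\fh_x$, so $\dim_k\mathfrak{c}\ge\dim T$ (as $\fh_x$ has full rank in $\fg$). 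Hence $\ker(\ad(Y)|_{\hfg_x(s)})=0$ for $s\notin\Z$, $\dim_k\mathfrak{c}=\dim T$, and $\fz_{\hfg}(Y)=\mathfrak{c}\otimes_k F$; since $\fz_{\hfg}(Y)$ is toral, so is $\mathfrak{c}$, and being of dimension $\dim T$ it is a Cartan subalgebra of $\fh_x$. As all Cartan subalgebras of $\fh_x$ are $H_x$-conjugate and $\ft\subset\fh_x$ is one of them, choose $\bar m\in H_x$ with $\Ad(\bar m)\fz_{\fh_x}(\bar Y)=\ft$, and set $m:=\theta'_x(\bar m)\in\hG_x$. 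Because $\theta'_x$ is a homomorphism and $\Ad(m)$ is $F$-linear, $\Ad(m)(\mathfrak{c})=\ft$ (the constants), hence $\Ad(m)\fz_{\hfg}(Y)=\ft\otimes_k F=\hft$. Also $m$ preserves the grading on $\hfg$, so $\Ad^*(m)\tbo$ is again homogeneous and is the representative of the new datum $\b'$; therefore $m\cdot(x,0,\b)=(x,0,\b')$ with $Z^0(\tbo')=mZ^0(\tbo)m^{-1}=Z^0_{\hG}(\hft)=\hT$. Since every point of $\Ao$ is graded compatible with $\hft$, this stratum is regular with connected centralizer $\hT$.

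\emph{Uniqueness.} Suppose $(x,0,\b'_1)$ and $(x,0,\b'_2)$ both arise from $(x,0,\b)$ under $\hG_x$ and have connected centralizer $\hT$, and let $Y'_i\in\hfg_x(0)$ be attached to $\b'_i$. From $\fz_{\hfg}(Y'_i)=\hft$ and the grading analysis above, $\ker(\ad(Y'_i)|_{\hfg_x(0)})\subseteq\hft\cap\hfg_x(0)=\ft$, and comparing dimensions it equals $\ft$; as $Y'_i$ centralizes itself, $Y'_i\in\hft\cap\hfg_x(0)=\ft$, and then $\fz_\fg(Y'_i)\otimes_k F=\hft$ forces $\fz_\fg(Y'_i)=\ft$, so each $Y'_i$ is a regular semisimple element of $\ft$ with $Z_G(Y'_i)=T$. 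Choose $n\in\hG_x$ with $n\cdot(x,0,\b'_1)=(x,0,\b'_2)$; its image $\bar n:=\theta_x^{-1}(n)\in H_x$ satisfies $\Ad(\bar n)Y'_1=Y'_2$, so $\bar n T\bar n^{-1}=Z_G(Y'_2)=T$ and $\bar n\in N\cap H_x$. Conversely every $\bar n\in N\cap H_x$ lifts through $\theta'_x$ to an element of $\hN\cap\hG_x$ and sends $(x,0,\b'_1)$ to a stratum with connected centralizer $\bar n\hT\bar n^{-1}=\hT$; and the stabilizer of $\b'_1$ in $N\cap H_x$ is $Z_{N\cap H_x}(Y'_1)=T$. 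Hence the strata so obtained form a single orbit on which $(N\cap H_x)/T$ — identified with $(\hN\cap\hG_x)/\hT_x$ — acts freely.

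\emph{Main obstacle.} The only genuinely substantive step is the grading/$F$-linearity argument identifying $\fz_{\hfg}(Y)$ with $\mathfrak{c}\otimes_k F$ for a Cartan subalgebra $\mathfrak{c}$ of the twisted copy $\hfg_x(0)$ of $\fh_x$: this is exactly what converts graded regularity at depth $0$ into ordinary $H_x$-conjugacy of Cartan subalgebras, after which existence is just that conjugacy and uniqueness is the elementary fact that two regular semisimple elements of $\ft$ that are $G$-conjugate are $N$-conjugate. Everything else is careful bookkeeping — transporting the $\hG_x$-action, the torus $Z^0(\tbo)$, and the normalizer quotient through $\theta_x$ — the subtlest part being to pin the residual ambiguity down to exactly $(\hN\cap\hG_x)/\hT_x$, which requires the regular semisimplicity of the resulting $Y'$.
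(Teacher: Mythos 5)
Your proof is correct and follows the same overall route as the paper: transport the depth-$0$ problem to the reductive quotient $H_x$ via $\theta_x$, apply conjugacy of Cartan subalgebras of $\fh_x$, and read off the answer back in $\hG_x$, with uniqueness reduced to the fact that the centralizer of a regular semisimple element of $\ft$ in $G$ is exactly $T$. The details of the centralizer computation genuinely differ, and your version is cleaner: the paper takes an arbitrary lift $m\in\hG_x$ of $\theta_x(h)$ and then proves $Z^0(\tbo')=\hT$ by a double inclusion requiring a filtered induction followed by a limit argument, whereas you choose $m=\theta'_x(\bar m)$ (so that $\Ad(m)$ preserves the $x$-grading) and exploit the $F$-linearity of $\ad(Y)$ with $Y$ homogeneous of degree $0$ to compute $\fz_{\hfg}(Y)=\mathfrak{c}\otimes_k F$ in one pass, where $\mathfrak{c}=\ker\ad(Y)|_{\hfg_x(0)}$. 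Your dimension count, $\sum_{s\in[0,1)}\dim_k\ker\ad(Y)|_{\hfg_x(s)}=\dim T$ forcing all noninteger-degree pieces to vanish and $\dim_k\mathfrak{c}=\dim T$, also supplies the justification that $\mathfrak{c}$ (the paper's $\ft'$) really is a Cartan subalgebra of $\fh_x$, something the paper asserts without elaboration. Your uniqueness argument matches the paper's, and you also note the freeness of the residual $(N\cap H_x)/T$-action, which is a small bonus not claimed in the statement. One small point worth a sentence: for the converse inclusion you need $\theta'_x(N\cap H_x)\subset\hN$, i.e.\ $\Ad(\theta'_x(\bar n))\hft=\hft$ and not merely $\Ad(\theta'_x(\bar n))\ft=\ft$; this follows because $\Ad(\theta'_x(\bar n))\hft$ is an $x$-graded split Cartan whose degree-$0$ piece lies in $\ft$, hence equals $\hft$ by the uniqueness of split Cartans graded compatible with $x$ noted in Section~\ref{torus}, but you should say so.
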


\begin{proof}  
  Write $S = Z^0(\tbo)$, so $\ft'=(d\theta_x)^{-1}(\fs_0/\fs_{0+})$ is
  a Cartan subalgebra in $\fh_x$.  Choose $h\in H_x$ such that
  $\Ad(h)\ft'=\ft$, and let $m\in\hG_x$ be a lift of $\theta_x(h)$.
  Define a new stratum $(x,0,\b')=m\cdot(x,0,\b)$ which has
  representative $\tbo' \in (\Ad^*(m)(\tbo) + \hfg^\vee_{x +})\cap
  \hfg^\vee_x(0)$.  It suffices to show that $Z^0 (\tbo') =\hT$.
  
  Let $Z_0 (\tbo') = Z^0 (\tbo') \cap \hG_{x}$ and $Z_+(\tbo') = Z^0
  (\tbo') \cap \hG_{x +}$.  Observe that $\theta_x^{-1} (Z_0 (\tbo')/
  Z_+ (\tbo')) = T$, so by the $T$-equivariance of $\theta_x$, if $t
  \in T \subset \hG_{x}$, $\Ad^*(t) (\tbo') - \tbo' \in \hfg^\vee_{x
    +} \cap \hfg^\vee_x (0) = \{0\}$.  We deduce that $\hT \subset Z^0
  (\tbo')$.
  
  On the other hand, the $T$-equivariance of $\theta_x$ also implies
  that $Z_0 (\tbo') \subset T \hG_{x, +}$.  Suppose $g \in Z_0 (\tbo')
  \cap \hG_{x, \ell}$ for $\ell > 0$.  Write $g \in \exp (X)\hG_{x,
    \ell+}$ for some $X \in \hfg_x(\ell)$.  Then, $\tbo' = \Ad^*(g)
  (\tbo') \in \tbo' + \ad^*(X)(\tbo') + \hfg_{x, \ell+}$.  It follows
  that $\ad^*(X) (\tbo') = 0$.  Therefore, $\ad^*(\Ad(m^{-1}) (X))
  (\tbo) \in \hfg^\vee_{x, \ell+}$.  Finally, this implies that
  $\Ad(m^{-1}) (X) \in \fs_\ell + \hfg_{x, \ell+}$ and $X \in
  \hft_\ell + \hfg_{x, \ell+}$.  Thus, $Z_0 (\tbo') \cap \hG_{x, \ell}
  \subset \hT_\ell \hG_{x, \ell+}$.  A standard limit argument shows
  that $Z_0(\tbo') \subset \hT_0$ and thus $Z^0 (\tbo') \subset \hT$.

  Now suppose that $(x, 0, \b)$ is a graded regular stratum with
  connected centralizer by $\hT$, and there exists $m \in \hG_x$ such
  that the same holds for $m \cdot (x, 0, \b)$.  It suffices to show
  that there exists $n \in \hN \cap \hG_x$ such that $n \cdot (x, 0,
  \b) = m \cdot (x, 0, \b)$.  Recall that $m' \cdot (x, 0, \b) = m
  \cdot (x, 0, \b)$ whenever $m' \in m \hG_{x+}$.  Let $\bar{m}$ be
  the image of $m$ in $H_x$ under the composition $\hG_x \to \hG_x /
  \hG_{x, +} \xrightarrow{\theta^{-1}_x} H_x$.  Then,
  $\theta'_x(\bar{m}) \in m \hG_{x,+}$ and $\Ad^*(\theta'_x (\bar{m}))
  (\tbo) \in \hfg^\vee_x (0)$.  Since $\tbo$ is a regular element of
  $\hfg_x(0)$ stabilized by $\hT$, and the same is true for
  $\Ad^*(\theta'_x (\bar{m})) (\tbo)$, we deduce that
  $\theta'_x(\bar{m}) \in \hN \cap \hG_x$.

Finally, it is clear that the action of $\hT_x$ fixes $(x, 0, \b)$.  Moreover,
the correspondence $m \mapsto \bar{m}$ above determines a homomorphism
from $\hN \cap \hG_x \to N \cap H_x$.  It is easily checked that this homomorphism
induces an isomorphism $(\hN \cap \hG_x)/ \hT_x \cong (N \cap H_x)/ T$.
\end{proof}

If $(x,0,\b)$ is graded regular with centralizer $\hT$, then, since
$x$ is graded compatible with $\hT$, $\tbo\in \ft\ddz$ and
$\Res(\tbo)$ is a regular element of $\ft$.

\begin{prop}\label{resequiv}
  Suppose that $(x, 0, \b)$ is a graded regular stratum with connected
  centralizer $\hT$, and let $m \in \hN \cap \hG_x$ determine an
  element $w \in W$.  Then, for any root $\a$, $\a(\Res(\tbo)) +
  \a(\tx) \in \Z_{< \a (\tx)}$ if and only if $w(\a)(\Res(\Ad^*(m)
  (\tbo)) + w(\a) (\tx) \in \Z_{<w(\a)(\tx)}$.
\end{prop}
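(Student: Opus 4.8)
The plan is to reduce the asserted equivalence to two elementary observations and then compare the two conditions termwise. Write $Y_0=\Res(\tbo)\in\ft$; this makes sense because $(x,0,\b)$ is graded regular with connected centralizer $\hT$, so $\tbo\in\ft\ddz$. The two observations are:
(i) $\Res(\Ad^*(m)(\tbo))=w(Y_0)$; and
(ii) $w(\a)(\tx)-\a(\tx)\in\Z$ for every root $\a$.

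Granting (i) and (ii), one finishes as follows. Using (i) and the $W$-equivariance of the natural pairing $\ft^\vee\times\ft\to k$, the quantity $w(\a)(\Res(\Ad^*(m)(\tbo)))+w(\a)(\tx)$ equals $w(\a)(w(Y_0))+w(\a)(\tx)=\a(Y_0)+w(\a)(\tx)$. The condition $\a(Y_0)+\a(\tx)\in\Z_{<\a(\tx)}$ is equivalent to: $\a(Y_0)+\a(\tx)\in\Z$ and $\a(Y_0)<0$; similarly, $\a(Y_0)+w(\a)(\tx)\in\Z_{<w(\a)(\tx)}$ is equivalent to: $\a(Y_0)+w(\a)(\tx)\in\Z$ and $\a(Y_0)<0$. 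By (ii) the two integrality conditions agree (they differ by an integer), and the strict inequalities are identical; hence the two conditions of the proposition are equivalent.

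For (i), I would identify $\hfg^\vee$ with $\hfg\ddz$ by means of the invariant form $\left<,\right>$, so that the coadjoint action corresponds to the adjoint action. Since $m\in\hN$ normalizes $\hT$, $\Ad(m)$ preserves $\hft=\ft\otimes_kF$; because $T(F)$ acts trivially on $\hft$, the restriction $\Ad(m)|_{\hft}$ depends only on the image $w$ of $m$ in $\hN/\hT=W$ and equals $w\otimes_k\id_F$. In particular $\Ad(m)$ preserves the (standard $z$-)grading of $\hft$ and restricts to $w$ on $\ft=\hft(0)$; hence $\Ad^*(m)(\tbo)=w(Y_0)\ddz$, which is already homogeneous of degree $0$, and $\Res(\Ad^*(m)(\tbo))=w(Y_0)$.

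For (ii), write $m=z^\mu m_0$ with $\mu\in X_*(T)$ and $m_0\in N(T)(\fo)$ (possible since $N(T)(F)=z^{X_*(T)}N(T)(\fo)$). Then $m$ acts on $\Ao$ by $y\mapsto w(y)+\mu$, and since $m\in\hG_x$ fixes $x$ we get $\mu=(\id-w)(\tx)\in X_*(T)$; consequently $w^{-1}(\tx)-\tx=w^{-1}(\mu)\in X_*(T)$. Therefore, for any root $\a$,
\[
w(\a)(\tx)-\a(\tx)=\langle\a,\,w^{-1}(\tx)-\tx\rangle=\langle\a,\,w^{-1}(\mu)\rangle\in\Z,
\]
since a character pairs integrally with a cocharacter. (Equivalently, by the remark following Lemma~\ref{actlem} one has $\tau(m)m^{-1}\in\hfg_x\cap\hft=\ft[[z]]$ with constant term $(\id-w)(\tx)$, which the decomposition $m=z^\mu m_0$ identifies with $\mu\in X_*(T)$.) The only point needing genuine care in the whole argument is this integrality of the translation part of $m$; everything else is a formal consequence of the invariance of $\left<,\right>$ and the structure of the Moy--Prasad gradings on $\hft$.
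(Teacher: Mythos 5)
Your proposal is correct, and it reduces to the same two facts as the paper's proof: that $\Res(\Ad^*(m)(\tbo))=w(\Res(\tbo))$ (which the paper treats as evident) and that $w(\a)(\tx)-\a(\tx)\in\Z$ for every root $\a$. The difference lies entirely in how you establish the integrality claim. The paper invokes Proposition~\ref{r=0} to produce $h\in H_x\cap N$ with $\Ad(h)(\tx)=w\tx$ and then exponentiates, observing that $\exp(2\pi i(w\tx-\tx))=h\exp(2\pi i\tx)h^{-1}\exp(2\pi i\tx)^{-1}$ is the identity because $H_x$ centralizes $\exp(2\pi i\tx)$. You instead use the decomposition $\hN=z^{X_*(T)}N(\fo)$ together with the fact that $m\in\hG_x$ fixes $x$ to identify $\tx-w(\tx)$ (up to the sign convention on the translation action, which is immaterial) with the translation part $\mu\in X_*(T)$ of $m$ in the affine Weyl group, so that integrality becomes the pairing of a root with a cocharacter. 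Your route is more elementary and yields the stronger conclusion $\tx-w(\tx)\in X_*(T)$; it also avoids the exponential and the description of $H_x$ as a connected centralizer, so it works uniformly under the paper's Convention~\ref{conv} without assuming $\C\subset k$, and it is consistent with Lemma~\ref{bnorm} and the remark following Lemma~\ref{actlem}, as your parenthetical alternative notes. The paper's route, by contrast, recycles machinery it has already set up (Proposition~\ref{r=0} and the structure of $\hG_x/\hG_{x+}\cong H_x$). Both are valid; yours is self-contained at the cost of re-deriving a small piece of Bruhat--Tits bookkeeping.
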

\begin{proof}
  It is evident that $w(\a) (\Res(\Ad^*(m) (\tbo)) = w(\a) (w
  \Res(\tbo)) = \a(\Res(\tbo))$.  It is now obvious that
  $\a(\Res(\tbo)) + \a(\tx) < \a (\tx)$ if and only if
  $w(\a)(\Res(\Ad^*(m) (\tbo)) + w(\a) (\tx) <w(\a)(\tx)$, so it
  remains to show that if one of the expressions on the left is an
  integer, the other is as well.  In particular, it suffices to show
  that $\a(\tx) - w (\a) (\tx) \in \Z$ for all $\a$, or equivalently
  (replacing $\a$ by $w^{-1}\a$), $\a(w \tx - \tx) \in \Z$.

  Exponentiating, it is enough to show that $\Ad(\exp(2 \pi i (w \tx -
  \tx))) (X) = X$.  However, by Proposition~\ref{r=0}, there is an
  element $h \in H_x \cap N$ such that $w \tx = \Ad(h) (\tx)$.
  Therefore, $\exp(2 \pi i(w \tx - \tx)) = h \exp(2 \pi i \tx) h^{-1}
  \exp(2 \pi i \tx)^{-1}$.  Since $h$ commutes with $\exp(2 \pi i
  \tx)$, this is the identity.
\end{proof}
\begin{defn}\label{nonres}
  Let $(x, 0, \b)$ be a graded regular stratum with $x \in \Ao$.
  Without loss of generality, assume that $\hT = Z^0 (\tbo)$.  We say
  that $(x, 0, \b)$ is \emph{resonant} if for some root $\a$ of $\hT$,
\begin{equation*}
\a (\Res (\tbo)) + \a(\tx) \in \Z_{<\a(\tx)}.
\end{equation*}
\end{defn}
By Proposition~\ref{r=0}, one can always chose $m \in \hG_x$ such that
$m \cdot (x, 0, \b) = (x, 0, \b')$ has connected centralizer $\hT$.
 The only ambiguity in the choice of $m$ lies in the fact that, for
any $n \in \hN \cap \hG_x$, $n m$ will also ``diagonalize'' $(x, 0,
\b)$.  However, Proposition~\ref{resequiv} implies that the condition
in the definition above holds for $\tbo$ if and only if it holds for
$\Ad^*(n) (\tbo)$.

\begin{defn}
  A stratum $(x,r,\b)$ is called \emph{regular} if the following
  conditions are satisfied: all subgroups $Z^0(\tb)$ for any
  representative $\tb$ are $\hG_{x+}$-conjugate maximal tori
  compatible with $x$, and additionally the stratum must be
  nonresonant when it has depth $0$.  If $\S$ is the
  $\hG_{x+}$-conjugacy class of maximal tori determined by these
  stabilizers, we say the stratum is \emph{$\S$-regular} (or even
  $S$-regular, if $S\in\S$).
\end{defn}

Note that any $\hG_{x+}$-conjugate of a representative $\tb$ is also a
representative, so the set of connected centralizers of a regular stratum is a
full $\hG_{x+}$-orbit.  Also, the compatibility with $x$ need only be
checked on a single $S\in\S$.  The following proposition shows that
$(x,r,\b)$ is regular if and only if it is conjugate to a graded
regular stratum coming from $x'\in\Ao$.

\begin{prop}\label{leadingterm}\mbox{} 
\begin{enumerate}\item If $x\in\Ao$, then the stratum $(x,r,\b)$ is
  graded regular (and nonresonant if $r = 0$) if and only if it is regular.
\item \label{leadingterm2} Let $(x,r,\b)$ be an $S$-regular stratum.
  If $\tb \in \pi_\fs^*(\fs^\vee_{-r}) + \hfg^\vee_{x, \ell-r}$ for
  $\ell > 0$ is any representative, then there exists $p \in \hG_{x,
    \ell}$ such that $\Ad^*(p) (\tb) \in \pi^*_\fs (\fs_{-r}^\vee)$.
\end{enumerate}
\end{prop}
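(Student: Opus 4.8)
The plan is to derive both directions of part~(1) from part~\eqref{leadingterm2}, and to prove part~\eqref{leadingterm2} by a successive‑approximation argument in the spirit of the proof of Theorem~\ref{compthm}. The implication ``regular $\Rightarrow$ graded regular (and nonresonant when $r=0$)'' is immediate, since $\tbo$ is one of the allowed representatives $\tb$ and nonresonance at depth $0$ is built into the definition of a regular stratum. So the real content is the converse, together with part~\eqref{leadingterm2}.

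I would first record, for $(x,r,\b)$ graded regular with $x\in\Ao$, $S=Z^0(\tbo)$ and $\fs=\Lie S$, two facts. (a) $\fs$ is actually \emph{graded} compatible with $x$: since $\tbo$ is a $(-r)$‑eigenvector of $\tau+\ad^*(\tx)$ and $[\,\tau+\ad^*(\tx),\ad^*(Z)\,]=\ad^*((\tau+\ad(\tx))Z)$, evaluating on $\tbo$ gives $(\tau+\ad(\tx))\fz(\tbo)\subset\fz(\tbo)$, so Lemma~\ref{lem1} applies; in particular the corestriction $\pi_\fs$ of Lemma~\ref{cores} is available. (b) $\tbo\in\pi_\fs^*(\fs^\vee_{-r})$, by Lemma~\ref{cores}\eqref{cores6} with $\ell=-r$ (it is $S$‑fixed and lies in $\hfg^\vee_{x,-r}$). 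Hence every representative of $\b$ lies in $\pi_\fs^*(\fs^\vee_{-r})+\hfg^\vee_{x,-r+}$, which is the hypothesis of part~\eqref{leadingterm2} with $\ell$ the first critical gap above $-r$ (positive because $r\in\Crit_x(\fg)$).

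The iteration step: given a representative $\tb\in\pi_\fs^*(\fs^\vee_{-r})+\hfg^\vee_{x,c}$ with $c$ a critical number and $\ell:=c+r>0$, set $Y=\tb-\pi_\fs^*(\rho_\fs(\tb))$. Using $\rho_\fs\circ\pi_\fs^*=\id$ and Lemma~\ref{cores}\eqref{cores2}, one gets $Y\in\hfg^\vee_{x,c}$ with $\rho_\fs(Y)=0$, so $Y\bmod\hfg^\vee_{x,c+}$ lies in $\ker\brho_{\fs,\ell}$; by Lemma~\ref{cores}\eqref{cores3} applied to $\tb$ there is $W\in\hfg_{x,\ell}$ with $Y\equiv\ad^*(W)(\tb)\pmod{\hfg^\vee_{x,c+}}$. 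With $p=\exp(-W)\in\hG_{x,\ell}$, the higher‑order terms of $\Ad^*(p)=\exp(-\ad^*(W))$ sit in $\hfg^\vee_{x,2\ell-r}\subset\hfg^\vee_{x,c+}$ (as $\ell>0$), so $\Ad^*(p)(\tb)\equiv\pi_\fs^*(\rho_\fs(\tb))\pmod{\hfg^\vee_{x,c+}}$, i.e. $\Ad^*(p)(\tb)\in\pi_\fs^*(\fs^\vee_{-r})+\hfg^\vee_{x,c'}$ for the next critical number $c'$, and it remains a representative since $p\in\hG_{x+}$. Iterating over the discrete critical numbers, taking the convergent limit $p=\lim(p_j\cdots p_1)\in\hG_{x,\ell}$ (the $p_j$ lie in $\hG_{x,c_j+r}$ with $c_j+r\to\infty$), and using $\bigcap_j\bigl(\pi_\fs^*(\fs^\vee_{-r})+\hfg^\vee_{x,c_j}\bigr)=\pi_\fs^*(\fs^\vee_{-r})$ (the $\hfg^\vee_{x,c_j}$ intersect in $0$ and $\rho_\fs$ splits off the $\fs^\vee$‑part), one obtains $\Ad^*(p)(\tb)\in\pi_\fs^*(\fs^\vee_{-r})$ -- this is part~\eqref{leadingterm2}, and by (b) it shows every representative of $\b$ is $\hG_{x+}$‑conjugate into $\pi_\fs^*(\fs^\vee_{-r})$.

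To finish the converse of part~(1) I must compute $Z^0(\tb')$ for $\tb'\in\pi_\fs^*(\fs^\vee_{-r})$ a representative. By Lemma~\ref{cores}\eqref{cores4} and Proposition~\ref{dual}\eqref{dual4}, the isomorphism $\hfg^\vee\cong\hfg$ given by $\langle\cdot,\cdot\rangle_\ddz$ carries $\pi_\fs^*(\fs^\vee_{-r})$ onto $\fs_{-r}$, taking $\tb'$ to some $T'\in\fs_{-r}$ and $\tbo$ to a homogeneous $T_0\in\fs(-r)$ with $T'\equiv T_0\pmod{\fs_{-r+}}$; moreover $Z^0(\tb')=Z^0(T')$ and $Z^0(T_0)=S$. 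Over a splitting field $E$ with $w$‑diagonalizer $g$, write $T'=\Ad(g)(t')$, $T_0=\Ad(g)(t_0)$ in $\ft(E)$; then $\fz(T_0)=\fs$ forces every $\gamma(t_0)$ ($\gamma$ a root) to be nonzero, hence of valuation exactly $-r$, while $\gamma(t'-t_0)$ has valuation $>-r$, so $\gamma(t')\ne0$ for all $\gamma$, $\fz(T')=\fs$ by descent, and $Z^0(T')=S$ (a connected group with Lie algebra $\fs$ containing the torus $S$). Thus each $Z^0(\tb)$ is an $\hG_{x+}$‑conjugate of $S$, hence a maximal torus compatible with $x$ (conjugation by $\hG_{x+}\subset\hG_x$ preserves the $x$‑filtration); with the assumed nonresonance at depth $0$, this is precisely regularity. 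I expect the main obstacle to be the bookkeeping of the iteration -- controlling which filtration pieces the nonlinear terms of $\Ad^*(\exp(-W))$ land in, and the convergence of the infinite product of conjugators -- rather than anything conceptual; the identification $Z^0(T')=S$ is a short valuation argument once the corestriction picture is in place.
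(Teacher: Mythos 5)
Your proposal is correct and follows the same overall scheme as the paper: show that any representative $\tb'\in\pi_\fs^*(\fs^\vee_{-r})$ has $\fz(\tb')=\fs$, prove part~(2) by iterating via the tame corestriction lemma (Lemma~\ref{cores}\eqref{cores3}), and combine the two to handle arbitrary representatives. The iteration step is essentially identical to the paper's, including the bound $\hfg^\vee_{x,2\ell-r}\subset\hfg^\vee_{x,(\ell-r)+}$ on the nonlinear part of $\Ad^*(\exp(-W))$ and the convergent product of conjugators. The one genuine variation is the computation of $\fz(\tb')$: the paper argues directly, extracting a nonzero leading graded component $Y\in\hfg_x(\ell)\setminus\fs(\ell)$ of a hypothetical element of $\fz(\tb')\setminus\fs$ and showing $\ad^*(Y)(\tbo)=0$ forces $Y\in\fs$; you instead transfer $\tb'$ to $T'\in\fs_{-r}$ via the invariant pairing (using Lemma~\ref{cores}\eqref{cores4} and Proposition~\ref{dual}\eqref{dual4}), pass to a splitting field, and observe that every root value $\gamma(t')$ has valuation exactly $-r$ — hence is nonzero — because $T'$ agrees with the regular-semisimple $T_0$ modulo $\fs_{-r+}$. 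Both are correct; yours is a little more concrete but needs a longer chain of identifications. Your observation~(a) — that graded regularity automatically yields \emph{graded} compatibility via the identity $[\tau+\ad^*(\tx),\ad^*(Z)]=\ad^*((\tau+\ad(\tx))(Z))$ applied to the eigenvector $\tbo$, then Lemma~\ref{lem1} — is a welcome detail, since the definition only demands compatibility yet both proofs (the paper's and yours) implicitly use the graded structure on $\fs$. Two small omissions: part~(2) is stated for arbitrary $x\in\B$, and you should add the one-line conjugation remark ($\pi_{\fs'}\circ\Ad(g)=\Ad(g)\circ\pi_\fs$) to reduce to $\Ao$, as the paper does; and in the splitting-field step you should say explicitly that $t'-t_0\in\pi_\fs^*(\fs^\vee_{-r+})$ because $\rho_\fs\circ\pi_\fs^*=\id$ together with $\rho_\fs(\hfg^\vee_{x,-r+})\subset\fs^\vee_{-r+}$ pins down the difference.
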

\begin{proof}
  It is trivial that regular implies graded regular.  Now, assume that
  $(x,r,\b)$ is graded regular, and set $S=Z^0(\tbo)$.  First, suppose
  that $\tb\in\pi_\fs^*(\fs^\vee)$ is a representative.  We show that
  $\fz(\tb)=\fs$.  By Lemma~\ref{cores}\eqref{cores6},
  $\fs\subset\fz(\tb)$.  Conversely, suppose this inclusion is strict.
  Then, there exists an element of $\fz(\tb)$ of the form $Y+Y'$ with
  $Y\in\hfg_x(\ell)\setminus\fs(\ell)$ and
  $Y'\in\hfg_{x,\ell+}\setminus\hfg_{x,\ell}$.  Since $\tb=\tbo+\a$
  with $\a\in\hfg^\vee_{x,-r+}$, $\ad^*(Y+Y')(\tb)$ equals
  $\ad^*(Y)(\tbo)\in\hfg^\vee(\ell-r)$ plus higher order terms.  It
  follows that $\ad^*(Y)(\tbo)=0$, contradicting that fact that
  $Y\notin\fs$.

  Next, let $\tb$ be any representative.  It suffices to find $p \in
  \hG_{x, 0+}$ such that $\Ad^*(p) (\tb) \in \pi_\fs^*(\fs^\vee)$, as
  the previous argument will then imply that
  $\fz(\tb)=\Ad(p^{-1})(\fs)$.  Note that $\tb \in
  \pi_\fs^*(\fs_{-r}^\vee) + \hfg_{x, (\ell-r)+}^\vee$ for some $\ell
  >0$.  Since $\tb - \pi_\fs^*(\rho_\fs (\tb)) + \hfg_{x,
    (\ell-r)+}^\vee \in \ker(\brho_\fs)$,
  Lemma~\ref{cores}\eqref{cores3} states that there exists $X \in
  \hfg_{x, \ell}$ such that $\ad^*(X) (\tb) \in \tb -
  \pi_\fs^*(\rho_\fs (\tb)) + \hfg_{x, (\ell-r)+}^\vee$.  Take $p_\ell
  = \exp(-X) \in \hG_{x, \ell}$.  Then, $\Ad^*(p_\ell) (\tb) \in
  \pi_\fs^*(\fs_{-r}^\vee) + \hfg_{x, (\ell-r)+}^\vee$.  Recursively
  applying this argument, we obtain an increasing sequence $\ell_i$
  (with $\ell_1=\ell$) and elements $p_{\ell_i}\in\hG_{x,\ell_i}$ such
  that
  $\Ad^*(p_{\ell_m})\cdots\Ad^*(p_{\ell_1})(\tb)\in\pi_\fs^*(\fs_{-r}^\vee)
  + \hfg_{x, (\ell_m-r)+}^\vee$ for all $m$.  Setting
  $p=\lim(p_{\ell_m}\cdots p_{\ell_1})\in\hG_{x,\ell}$, we have
  $\Ad^*(p)(\tb)\in \pi_{\fs}^*(\fs_{-r}^\vee)$ as desired.  This also
  proves the second statement for $x\in\Ao$.

Finally, the general case of part~\eqref{leadingterm2} follows by
conjugating a regular stratum $(x,r,\b)$ to $(x',r,\b')$ with
$x'\in\Ao$.  One need only observe that conjugation preserves Moy-Prasad
filtrations while if $S'=gSg^{-1}$, then
$\pi_{\fs'}\Ad(g)=\Ad(g)\pi_\fs$.
\end{proof}

Not every maximal torus can be the connected centralizer of a regular stratum.
Recall that a Weyl group element $w$ is called regular if it has an
eigenvector in the reflection representation whose stabilizer is
trivial~\cite{Spr74}.  Equivalently, it has a regular semisimple
eigenvector in $\ft$.  Note that the eigenvalue of such a regular
eigenvector can equal one only for the identity element of $W$.  We
say that a maximal torus (or Cartan subalgebra) has \emph{regular type} if it has
type $\g$, with $\g$ a regular conjugacy class in $W$.

\begin{cor}\label{regtorus}
  Let $\g$ be a conjugacy class in $W$.  Then, there exists an
  $S$-regular stratum for some maximal torus $S$ of type $\g$ if and
  only if $\g$ is regular. In this case, the set of $x\in\Ao$ which
  support an $S$-regular stratum for some $S$ of type $\g$ is
  precisely $\Pi_\g$, which is described explicitly in
  Corollary~\ref{strongcompat}.
\end{cor}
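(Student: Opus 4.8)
The plan is to prove both implications through a single dictionary: for $x\in\Ao$, a graded regular stratum $(x,r,\b)$ whose connected centralizer has type $\g$ corresponds, via a $w$-diagonalizer, to a regular semisimple eigenvector of a representative $w$ of $\g$ inside $\ft$ — equivalently, to the statement that $w$ is regular in the sense of \cite{Spr74}. Throughout I would use the invariant form to identify $\hfg^\vee$ with $\hfg$; by Propositions~\ref{ev} and~\ref{dual}\eqref{dual4} this identification is compatible with the Moy--Prasad gradings, so for $x\in\Ao$ it carries $\hfg^\vee_x(-r)$ onto $\hfg_x(-r)$ and the coadjoint action to the adjoint action. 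Under it the homogeneous representative $\tbo$ of a graded regular stratum $(x,r,\b)$ becomes an element $Y_0\in\hfg_x(-r)$ with $\fz(Y_0)=\fs$; in particular $Y_0$ is regular semisimple and, since $Y_0$ centralizes itself, $Y_0\in\fs\cap\hfg_x(-r)=\fs(-r)$.

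For the implication ``there is an $S$-regular stratum with $S$ of type $\g$'' $\Rightarrow$ ``$\g$ is regular'': since all apartments are $\hG$-conjugate I may assume $x\in\Ao$, and then by Proposition~\ref{leadingterm} and Theorem~\ref{compthm} I may assume the stratum is graded regular with $\fs=Z^0(\tbo)$ graded compatible with $x$. I would apply Theorem~\ref{gx} to $(\fs,x)$ and a chosen class representative $w$: this yields a splitting field $E=k((z^{1/e}))$, a generator $\sigma$ of $\Gal(E/F)$ with $\sigma(z^{1/e})=\xi z^{1/e}$, and a $w$-diagonalizer $g$ for which $\Ad(g^{-1})$ intertwines $\sigma$ with $w\circ\sigma$ and carries $\tau+\ad(\tx)$ to $\tau$ on $\ft(E)$ (the latter because $\Ad(g^{-1})(\tx+\tau(g)g^{-1})$ lies in $\ft$, hence acts as $0$ by $\ad$ on $\ft(E)$). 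Then $\Ad(g^{-1})(Y_0)$ is a $\tau$-eigenvector of eigenvalue $-r$ in $\ft(E)$, so $\Ad(g^{-1})(Y_0)=z^{-r}X$ with $X\in\ft$; its centralizer in $\fg(E)$ is $\Ad(g^{-1})(\fs(E))=\ft(E)$, so $X$ is regular semisimple, and Galois-invariance of $Y_0$ forces $w(X)=\xi^{re}X$, a root-of-unity multiple of $X$. Thus $w$ has a regular semisimple eigenvector in $\ft$, i.e.\ $\g$ is regular.

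For the converse, and simultaneously for the inclusion $\Pi_\g\subseteq\{\,x\in\Ao:\ x\text{ supports an }S\text{-regular stratum with }S\text{ of type }\g\,\}$: the reverse inclusion is immediate from the definition of a regular stratum, and since $\Pi_\g\ne\emptyset$ by Theorem~\ref{torcon} this also settles existence. So take $x\in\Pi_\g$, so $x$ is compatible with some $\fs$ of type $\g$, and by Theorem~\ref{compthm} I may assume $x$ is graded compatible with $\fs$. If $\g$ is the identity class then $\Pi_\g=\Ao$ and, for any $x$, $Y_0=z^{-1}\delta$ with $\delta\in\ft$ regular semisimple defines a depth-one $\hT$-regular stratum at $x$ (note $\hT$ is compatible with every point of $\Ao$). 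If $\g\ne 1$, choose a regular representative $w$ and, via Theorem~\ref{gx}, a $w$-diagonalizer $g=z^{-t}h$ with auxiliary graded-compatible point $y$ and $\fs(0)=\Ad(h)(\ft^w)$; pick a regular semisimple eigenvector $X\in\ft$ of $w$, say $w(X)=\xi^bX$ with $0<b<e$ (here $b\ne0$ since the regular eigenvalue of $w\ne1$ is $\ne1$), set $r=b/e>0$, and put $Y_0=\Ad(g)(z^{-r}X)$. Now $z^{-r}X$ is fixed by $w\circ\sigma$ and $\tau$-homogeneous of degree $-r$, so $Y_0\in\fs=\fs(E)^\Gamma$ is $(\tau+\ad(\ty))$-homogeneous of degree $-r$; since $x$ and $y$ are both graded compatible with $\fs$ their images differ by an element of the abelian algebra $\fs(0)$ (Lemma~\ref{fs}), so the $\fs$-gradings they induce coincide and $Y_0\in\fs(-r)\subseteq\hfg_x(-r)$, with $\fz(Y_0)=\fs$ as before. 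The functional cut out by $Y_0$ then defines a graded regular — hence $S$-regular — stratum $(x,r,\b)$ with $S$ of type $\g$. Combined with the explicit description of $\Pi_\g$ in Corollary~\ref{strongcompat}, this completes the argument.

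The step I expect to be the main obstacle is the grading bookkeeping: making sure the element $Y_0$ produced from the diagonalizer — which is naturally homogeneous for the grading attached to the auxiliary point $y$ of Theorem~\ref{gx}, not for the one attached to the prescribed point $x$ — really lands in $\hfg_x(-r)$, and that the identification $\hfg^\vee\cong\hfg$ respects the $x$-gradings. Once these conventions are nailed down, the remaining pieces — computing centralizers under $\Ad(g^{-1})$ and reading off $w(X)=\xi^{re}X$ from the twisted Galois descent — are routine.
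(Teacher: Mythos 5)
Your argument is correct and follows essentially the same route as the paper: the forward direction pulls the homogeneous representative back through a $w$-diagonalizer to produce a regular semisimple eigenvector of $w$ with eigenvalue $\xi^{re}$, and the converse pushes such an eigenvector forward to build the stratum. The only difference is one of completeness rather than method — the paper leaves the converse as an assertion ("it is easy to write down an explicit regular stratum"), whereas you carry out the grading bookkeeping via Lemma~\ref{fs} and Theorem~\ref{gx} explicitly.
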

\begin{proof}
  It is obvious that the split torus $\hT$, corresponding to $e\in W$,
  admits regular strata (with $r=0)$, so we assume that $S$ is
  nonsplit.  Let $E/F$ be a degree $e$ extension over which $S$
  splits, and let $g\in G(E)$ be a $w$-diagonalizer of $S$.  It
  follows that $\a=\Ad^*(g^{-1}) (\tb) \in \hft^\vee_E (-e r)$ has
  connected centralizer $\hT$.  One can then find $v\in z^{-r}\ft$
  regular semisimple such that $\a=\langle v,\cdot\rangle$.  With $\s$
  our fixed generator for $\Gal(E/F)$ and $\xi$ the $e$-th root of
  unity defined by $\s(z^{1/e})=\xi z^{1/e}$, we have
  $w^{-1}(v)=\s(v)=\xi^{-re}v$.  We deduce that $z^r v\in\ft$ is a
  regular eigenvector for $w$ (with eigenvalue $\xi^{re}$).

  If $\g$ is a nonidentity regular conjugacy class, $e^{2\pi i r}(\ne
  1)$ is a regular eigenvalue for $\g$ with $r>0$, and $x\in\Pi$, it
  is easy to write down an explicit regular stratum based at $x$ of
  depth $r$ whose connected centralizers are of type $\g$.  A
  classification of such strata is given in Theorem~\ref{isothm}.
\end{proof}

\begin{exam} For $\GL_n$, a conjugacy classes in the Weyl group $S_n$ is
  regular if its cycle decomposition consists of either $n/k$
  $k$-cycles or $(n-1)/k$ $k$-cycles and an additional $1$-cycle.  The
  corresponding maximal tori are the isomorphic to
  $(F[z^{1/k}]^\times)^{n/k}$ and $(F[z^{1/k}]^\times)^{(n-1)/k}\times
  F^\times$ respectively.  A maximal torus of the former type is
  called uniform; formal connections contains $S$-regular strata for
  $S$ uniform were studied in ~\cite{BrSa1}.  
\end{exam}

\section{Isomorphism classes of flat $G$-bundles}\label{isosect}

In this section, we give a parameterization for the space of
isomorphism classes of formal flat $G$-bundles that contain regular
strata.  More precisely, let $S$ be a maximal torus of regular type
which is compatible with some point in $\Ao$.  If $r\ge 0$, let
$\cC(S,r)$ be the category of connections of slope $r$ that contain an
$S$-regular stratum.  For each $x\in\Ao$ compatible with $x$, we also
define the category $\cCfr_x(S,r)$ of \emph{framed} flat formal
$G$-bundles whose objects are quadruples $\cF=(\sG,\n,(x,r,\b),\phi)$,
where $(\sG,\n)$ is an object in $\cC(S,r)$ containing the regular
stratum $(x,r,\b)$ with respect to the trivialization $\phi$.  The
morphisms in $\Hom(\cF,\cF')$ consists of isomorphisms
$\psi:(\sG,\n)\to(\sG',\n')$ such that
$\psi'=\phi'\circ\psi\circ\phi^{-1}\in \hG_x$ and $\psi'^*(\b')=\b$.
There is a forgetful ``deframing'' functor $\cCfr_x(S,r)\to\cC(S,r)$.
We show that the moduli space of $\cCfr_x(S,r)$ is the space of
\emph{$(S,r)$-formal types}--an explicitly-determined
open\footnote{for $r>0$} subset of $\fs^\vee_{-r}/\fs^\vee_{0+}$.  The
space of formal types is endowed with an action of the relative affine
Weyl group, and the moduli space of $\cC(S,r)$ is the corresponding
orbit space.

Throughout this section, $S$ will denote a maximal torus of regular
type compatible with some point in $\Ao$.  For clarity of exposition,
we will assume that $S=\hT$ when $r=0$.  This restriction is
unnecessary, but it allows one to avoid the notational complications
inherent in discussing the resonance condition for other split maximal
tori.

\subsection{Framed flat $G$-bundles and formal types}

In this section, we show that the category $\cCfr_x(S,r)$ is
essentially independent of the choice of $x$ for $r>0$ and compute its
moduli space.

\begin{thm}\label{diagthm}
  Suppose that $(\sG, \n)$ contains an $S$-regular stratum $(x, r,
  \b)$ with respect to the trivialization $\phi$.
\begin{enumerate}
\item\label{diag1} There exists $p \in \hG_{x+}$ and an element $\tA
  \in \pi_\fs^*(\fs_{-r}^\vee)$ such that $[\n]_{p \phi}- \tx \ddz =
  \tA - \tx \ddz$.
\item\label{diag2} The orbit of $\tA- \tx \ddz$ under
  $\hG_{x+}$-\emph{gauge} transformations contains $\tA - \tx \ddz +
  \hfg^\vee_{x+}$.
\item\label{diag3} If $\tA^1, \tA^2 \in \pi^*_\fs(\fs_{-r}^\vee)$ both
  determine the same regular stratum $(x, r, \b)$, and there exists $p
  \in \hG_{x}$ such that $p \cdot \tA^1 \in \tA^2 + \hfg^\vee_{x+}$,
  then $\tA^1 \in \tA^2 + \pi^*_\fs(\fs^\vee_{0+})$.
\end{enumerate}
\end{thm}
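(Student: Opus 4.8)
\emph{Setup.} The plan is to prove the three parts in order; after conjugating by an element of $\hG_x$ (Theorem~\ref{compthm}) we may assume $\fs$ is graded compatible with $x$, and since an $S$-regular stratum with $x\in\Ao$ is graded regular (Proposition~\ref{leadingterm}) we may take $\fs=\fz(\tbo)$. Throughout we use the tame corestriction $\pi_\fs$ of Lemma~\ref{cores} and the change-of-trivialization calculus of Lemma~\ref{actlem}. For part \eqref{diag1}: since $(\sG,\n)$ contains the stratum with respect to $\phi$, the form $[\n]_\phi-\tx\ddz$ is a representative of $\b$, hence lies in $\tbo+\hfg^\vee_{x,-r+}$; and $\tbo\in\pi_\fs^*(\fs^\vee_{-r})$ because it is $S$-stable and homogeneous of degree $-r$ (Lemma~\ref{cores}\eqref{cores6}). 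Thus $[\n]_\phi-\tx\ddz\in\pi_\fs^*(\fs^\vee_{-r})+\hfg^\vee_{x,\ell-r}$ for the least positive critical number $\ell$. I would then repeat the recursion of Proposition~\ref{leadingterm}\eqref{leadingterm2} with gauge transformations in place of the coadjoint action: writing the degree-$(\ell-r)$ part of the current form as its $\pi_\fs^*$-component plus a piece $\zeta\in\ker\brho_{\fs,\ell}$, one uses Lemma~\ref{cores}\eqref{cores3} to pick $X\in\hfg_{x,\ell}$ with $\ad^*(X)(\tbo)\equiv\zeta\pmod{\hfg^\vee_{x,(\ell-r)+}}$ and gauges by $\exp(-X)\in\hG_{x,\ell}$. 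When $r>0$ the Maurer--Cartan correction supplied by Lemma~\ref{actlem}\eqref{act4} lies in $\hfg^\vee_{x,\ell}\subseteq\hfg^\vee_{x,(\ell-r)+}$, so it does not interfere, and the new form lies in $\pi_\fs^*(\fs^\vee_{-r})+\hfg^\vee_{x,\ell'-r}$ for the next critical number; iterating and passing to the limit $p\in\hG_{x+}$ produces $\tA$. (For $r=0$, where $S=\hT$, the correction is of the same degree as $\zeta$, but the relevant linear map is still invertible because the stratum is nonresonant, which is part of the definition of ``regular''.)

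\emph{Part \eqref{diag2}.} Given $C\in\hfg^\vee_{x+}$, I would build $p\in\hG_{x+}$ with $p\cdot(\tA-\tx\ddz)=\tA-\tx\ddz+C$ recursively, in order of increasing positive critical number $d$, using two kinds of moves that change the connection at degree $d$ and at no smaller degree. Because $\tbo$ is regular, $\fs(d)$ commutes with $\tbo$, with the $\fs$-components of the connection, and (by graded compatibility) with $\tx$; hence gauging by $\exp(Y)$ with $Y\in\fs(d)$ changes the degree-$d$ part by $-dY$ and does nothing else, which handles the $\pi_\fs^*$-component of $C$ at degree $d$. For the complementary component one gauges by $\exp(X)$ with $X\in\hfg_{x,d+r}$: the degree-$d$ effect is $\ad^*(X)(\tbo)$, which by Lemma~\ref{cores}\eqref{cores3} applied at level $d+r$ runs over all of $\ker\brho_{\fs,d+r}$, i.e.\ the complement of $\pi_\fs^*(\fs^\vee(d))$ in $\hfg^\vee_x(d)$, while every other effect of this transformation occurs at degree $\ge d+r>d$. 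Ordering the steps so that later ones do not disturb earlier ones and passing to the limit completes the construction; again nonresonance handles $r=0$.

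\emph{Part \eqref{diag3}.} First I would use part \eqref{diag2}, applied to $\tA^2$: since $p\cdot(\tA^1-\tx\ddz)$ lies in $(\tA^2-\tx\ddz)+\hfg^\vee_{x+}$, there is $q\in\hG_{x+}$ with $q\cdot(\tA^2-\tx\ddz)=p\cdot(\tA^1-\tx\ddz)$, so $g:=q^{-1}p\in\hG_x$ satisfies $g\cdot(\tA^1-\tx\ddz)=\tA^2-\tx\ddz$, with $\tA^1,\tA^2\in\pi_\fs^*(\fs^\vee_{-r})$ sharing the homogeneous leading term $\tbo$. Reducing this identity modulo $\hfg^\vee_{x,-r+}$ shows that the image $\bar g\in H_x$ of $g$ stabilizes $\tbo$; since $\tbo$ is regular, the Weyl group of $H_x$ acts freely on it, so $\bar g$ lies in the maximal torus $Z^0_{H_x}(\tbo)$, which lifts into $S_0\subseteq S$. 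A gauge transformation by such a lift changes $\tA^1-\tx\ddz$ only by an element of $\pi_\fs^*(\fs^\vee_{0+})$, because $\Ad^*(S_0)$ fixes $\pi_\fs^*(\fs^\vee)$ pointwise and fixes $\tx$, while the Maurer--Cartan term of an element of $S_0$ lies in $\fs_{0+}$; so we may assume $g\in\hG_{x+}$. Finally I would run an induction over the critical numbers $\ell$ with $0<\ell\le r$: at each stage $g\cdot(\tA^1-\tx\ddz)$ lies in $\pi_\fs^*(\fs^\vee_{-r})-\tx\ddz$ and agrees with $\tA^1-\tx\ddz$ below degree $\ell-r$, and comparing degree-$(\ell-r)$ parts, Lemmas~\ref{actlem}\eqref{act4} and \ref{cores}\eqref{cores3} show that the only possible discrepancy is a term $\ad^*(X)(\tbo)$ with $X\in\hfg_x(\ell)$ lying in $\ker\brho_{\fs,\ell}$; but the left-hand side has no such component, forcing $\ad^*(X)(\tbo)=0$, hence $X\in\fs(\ell)$. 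Absorbing $\exp(X)$ into an $S$-gauge transformation (which alters only degrees $>0$) advances the induction one critical number higher, and when it reaches degree $0$ one concludes $\tA^1-\tA^2\in\pi_\fs^*(\fs^\vee_{0+})$.

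\emph{Main obstacle.} I expect part \eqref{diag3} to be the hard part: in the range $(-r,0]$ the Maurer--Cartan corrections are no longer automatically of higher order than the terms one wants to control, so the naive recursion used for part \eqref{diag1} breaks down. The two ingredients that rescue it are the freeness of the $H_x$-Weyl action on the regular leading term $\tbo$, which pins down the degree-$(-r)$ ambiguity of the gauge element, and the tame-corestriction identity of Lemma~\ref{cores}\eqref{cores3}, which forces any ``non-toral'' gauge move that preserves membership in $\pi_\fs^*(\fs^\vee_{-r})$ to be, in fact, toral and hence harmless in degrees $\le 0$.
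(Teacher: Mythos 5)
Parts \eqref{diag1} and \eqref{diag2} of your proposal follow the same general strategy as the paper (recursive gauge corrections along critical numbers, with the Maurer--Cartan contribution landing at degree $\ell>\ell-r$ when $r>0$). One difference worth noting: the paper interleaves an $S$-gauge step (Lemma~\ref{tdiff}) into the recursion of part~\eqref{diag1} so that the resulting $\tA$ has no $\fs$-components in positive degree. This normalization is what makes the paper's part~\eqref{diag2} a one-line argument --- the algorithm run on $\tA+X$ for $X\in\hfg^\vee_{x,0+}$ reproduces $\tA$ exactly. Your part~\eqref{diag2} instead builds the gauge element directly, so it does not need the normalization; this is a valid alternative, though your assertion that an $\fs(d)$-gauge ``changes the degree-$d$ part by $-dY$ and does nothing else'' should read ``and changes nothing at degrees $\le d$'' --- there are genuine perturbations at higher degree, absorbed later in the recursion. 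For part~\eqref{diag3} your route is genuinely different from the paper's. The paper applies Proposition~\ref{leadingterm}\eqref{leadingterm2} to produce $n=qp\in N(S)\cap\hG_x$, proves $n\in S_0$ by an eigenvector computation with the $w$-diagonalizer of Theorem~\ref{gx} (showing $\Ad^*(n)(\tA_{-r})$ is a $\tau+\ad(\tx)$-eigenvector, hence equal to $\tA_{-r}$), and then tracks the formal type via $\rho_\fs$ and Lemma~\ref{redstep}\eqref{redstep2}. You instead compare $\tA^1$ and $\tA^2$ degree by degree, showing at each critical $\ell\in(0,r]$ that the degree-$\ell$ graded part of the gauge element lies in $\fs(\ell)$ and hence $\tA^1_{\ell-r}=\tA^2_{\ell-r}$. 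Since every $S$-gauge used perturbs only degrees $>0$ while the comparisons occur at degrees $\ell-r\le 0$, this recursion does close up, and it avoids the diagonalizer trick entirely; that is a real simplification.

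There is, however, one genuine gap. Your justification that ``$\bar g$ lies in the maximal torus $Z^0_{H_x}(\tbo)$'' because ``the Weyl group of $H_x$ acts freely on it'' does not hold up: the stabilizer $Z_{H_x}(\tbo)$ of $\tbo$ in $H_x$ is not a priori connected, since $\hfg_x(-r)$ is not the adjoint representation of $H_x$ when $r\notin\Z$. (For $G=\SL_2$, $x$ the alcove barycenter, and a Coxeter stratum of depth $1/2$, $Z_{H_x}(\tbo)=\{\pm1\}$ is disconnected.) The conclusion you want --- that $g\in S_0\hG_{x+}$ --- is still true, but for a different reason: $\theta_x$ intertwines the $H_x$-action on $\hfg_x(-r)$ with the $\hG_x/\hG_{x+}$-action on $\hfg_{x,-r}/\hfg_{x,-r+}$, so $\theta'_x(\bar g)\in\hG_x$ stabilizes $\tbo$ \emph{exactly} as an element of $\hfg^\vee_x(-r)$; since $\tbo$ is regular semisimple with connected centralizer $S$, its full centralizer in $\hG$ is also $S$, whence $\theta'_x(\bar g)\in S\cap\hG_x=S_0$. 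This is the $r>0$ analogue of what Proposition~\ref{r=0} proves for depth $0$, and you need it explicitly. Relatedly, your claim that ``$\Ad^*(S_0)$ fixes $\tx$'' is false for nonsplit $S$ (e.g.\ the Coxeter torus does not centralize $\tx$); the correct statement, supplied by Lemma~\ref{actlem}\eqref{act3}, is only that an $S_0$-gauge moves $\tA-\tx\ddz$ by an element of $\hfg^\vee_{x+}$, which is enough for the reduction to $g\in\hG_{x+}$.
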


Before proving this theorem, we show how it can be recast in the
language of \emph{formal types}.  Since $\pi_\fs^* : \fs^\vee \to
\hfg^\vee$ is an injection which is compatible with the Moy-Prasad
filtration induced by $x$, the above theorem suggests that one may
parameterize flat $G$-bundles of slope $r$ containing $S$-regular
strata by elements of $\fs^\vee_{-r} / \fs^\vee_{0+}$.  In the
following, we will adopt the notational convention that whenever $\tA
\in \pi^*_\fs (\fs_{-r}^\vee)$, then $A = \rho_\fs (\tA) +
\fs^\vee_{0+} \in \fs^\vee_{-r} / \fs^\vee_{0+}$.  Similarly if $A \in
\fs_{-r}^\vee/ \fs^\vee_{0+}$, then $\tA \in
\pi^*_\fs(\fs^\vee_{-r})$, unless already defined, will denote an
arbitrary element of $\pi^*_\fs(A)$.

\begin{defn}\label{ftdef}
  A functional $A \in (\fs_0)^\vee\cong \fs^\vee / \fs^\vee_{0+}$ is
  called an \emph{$S$-formal type of depth $r$} if
\begin{enumerate}
\item the smallest congruence ideal contained in $A^\perp$
is $\fs_{r+}$; 
\item there exists $x \in \Ao$ compatible with $S$; and
\item \label{ft3} for some $x$ compatible with
  $S$, the corresponding stratum $(x, r, A^x)$ is
  $S$-regular, where $A^x$ is the functional induced by
  $\tA-\tx \ddz$.
\end{enumerate}
We denote the space of $S$-formal types of depth $r$ by $\AT(S,r)$,
which we will view as a subset of $\fs^\vee_{-r}/\fs^\vee_{0+}\cong
(\fs_0/\fs_{r+})^\vee$.  An $S$-formal type is any element of
$\cup\AT(S,r)$.
\end{defn} 
\begin{rmk} When $r>0$, $\AT(S,r)$ is an open subset of the affine
  space
  $\fs^\vee_{-r}/\fs^\vee_{0+}\cong\bigoplus_{j=-r}^0\fs^\vee(j)$ with
  the summation only including $j\in\Crit(\fs)$.  Indeed, a coset
  $\fs_{-r}^\vee/\fs_{0+}^\vee$ corresponds to an element of
  $\AT(S,r)$ if and only if its projection onto $\fs^\vee(-r)$ is
  regular, which is clearly an open condition.  If $r=0$, the proof of
  Proposition~\ref{ftlem} below shows that $\AT(\hT,0)\cong\{X\in
  \ft\mid \a(X)\notin \Z \text{ for all }\a\in\Phi\}$.  This is not
  Zariski-open, but if $k=\C$, it is open in the complex topology.
\end{rmk}

\begin{prop}\label{ftlem}
  Suppose that $A \in (\fs_0)^\vee$ is an $S$-formal type of depth
  $r$.
\begin{enumerate}\item If $r>0$, $(x,r,A^x)$ is
  $S$-regular for all $x\in\Ao$ compatible with $S$. 
\item When $r = 0$, the stratum $(x, r, A^x)$ is $\hT$-regular if and
  only if $\a(\Res(\tA)) \ne \a(\tilde{x})$ for all
  $\a\in\Phi$. In
  particular, if $H_x=G$, then $(x, r, A^x)$ is $\hT$-regular for all
  $A\in\AT(\hT,0)$.
\end{enumerate}
\end{prop}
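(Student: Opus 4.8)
The plan is to treat the two cases of the proposition separately, in each analyzing the homogeneous representative $\tbo\in\hfg^\vee_x(-r)$ of $A^x$ and converting ``graded regular'' into ``regular'' via Proposition~\ref{leadingterm}. For $r>0$ one has $\tx\ddz\in\hfg^\vee_{x,-r+}$, so $A^x$ is induced by the coset $\tA+\hfg^\vee_{x,-r+}$ and $\tbo$ is the image of $\tA$ in $\hfg^\vee_x(-r)$. The first thing I would record is that condition~(1) of Definition~\ref{ftdef} is equivalent to the image of $\rho_\fs(\tA)$ in $\fs^\vee_{-r}/\fs^\vee_{-r+}$ being nonzero, and since $\rho_\fs(\hfg^\vee_{x,-r+})\subseteq\fs^\vee_{-r+}$ this forces $\tbo\neq 0$; hence the depth of any flat bundle with this matrix is exactly $r$ at every compatible $x$, so it only remains to control the centralizer.

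For part~(1), I would first reduce to the case that $x$ is graded compatible with $\fs$. The proof of Theorem~\ref{compthm} in fact produces $q\in\hG_{x+}$ (not merely $\hG_x$) with $\Ad(q^{-1})\fs$ graded compatible with $x$; since $q\in\hG_{x+}$, conjugating the stratum by $q$ replaces each connected centralizer by a $\hG_{x+}$-conjugate and replaces $S$ by a torus in the same class $\S$, so $(x,r,A^x)$ is $S$-regular if and only if the $q$-conjugated stratum --- which comes from $\Ad^*(q)\tA\in\pi_{\fs'}^*((\fs')^\vee_{-r})$ with $\fs'=\Ad(q^{-1})\fs$ graded compatible with $x$ --- is $\fs'$-regular. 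So assume $\fs$ is graded compatible with $x$. Then $\pi_\fs^*$ respects the $x$-grading (Lemma~\ref{cores}), so $\tbo=\pi_\fs^*(\rho_\fs(\tA)(-r))\in\pi_\fs^*(\fs^\vee(-r))$, and Lemma~\ref{cores}\eqref{cores6} gives $\fs\subseteq\fz(\tbo)$. Moreover the grading on $\fs$ is intrinsic: by Lemma~\ref{fs} any two points of $\Ao$ graded compatible with $\fs$ differ by an element of $\fs(0)$, which acts trivially on the abelian algebra $\fs$, and $\pi_\fs^*$ does not depend on $x$ at all; hence $\tbo$, and therefore $\fz(\tbo)$, is the same for every point of $\Ao$ graded compatible with $\fs$. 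Combining this with the hypothesis that $(x_0,r,A^{x_0})$ is $S$-regular for some compatible $x_0$ --- which after the same reduction gives $\fz(\tbo)=\fs$ at a graded compatible point --- yields $\fz(\tbo)=\fs$, i.e.\ $Z^0(\tbo)=S$ (it contains the maximal torus $S$ and is a maximal torus), and $S$ is compatible with $x$. As $r>0$ no nonresonance condition enters, so Proposition~\ref{leadingterm}(1) upgrades ``graded regular'' to ``regular'' and $(x,r,A^x)$ is $S$-regular.

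The step I expect to be the main obstacle is reconciling the reduction at $x$ with the reduction at the witness $x_0$: a priori these produce two different $\hG$-conjugates of $\fs$, and one must check that the intrinsicness above really transports $S$-regularity between them. The cleanest way to organize this is to phrase the criterion for $(x,r,A^x)$ to be $S$-regular entirely in terms of the $x$-independent datum $\bar A:=\rho_\fs(\tA)\bmod\fs^\vee_{-r+}\in\fs^\vee_{-r}/\fs^\vee_{-r+}$: namely, for any $x\in\Ao$ compatible with $\fs$, $(x,r,A^x)$ is $S$-regular if and only if $\bar A\neq 0$ and the centralizer in $\hfg$ of the element of $\pi_\fs^*(\fs^\vee(-r))$ lifting $\bar A$ equals $\fs$ --- a condition not mentioning $x$. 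Once this equivalence is established (using Lemma~\ref{cores}\eqref{cores3}, Remark~\ref{tamermk}, and the argument in the proof of Proposition~\ref{leadingterm}), the hypothesis at $x_0$ and the conclusion at arbitrary compatible $x$ become the same assertion.

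For part~(2) we have $S=\hT$, $r=0$, and every point of $\Ao$ is graded compatible with $\hft$. Identifying $A$ with $R=\Res(\tA)\in\ft$ via the invariant form, the homogeneous representative $\tbo$ of $A^x$ has residue $R-\tx\in\ft\subset\fh_x$. Arguing exactly as in the proof of Proposition~\ref{r=0}: $Z^0(\tbo)\supseteq\hT$ always; a centralizer computation in $H_x$ together with the standard limit argument (using that each root occurs in some positive $x$-degree) shows $Z^0(\tbo)=\hT$ precisely when $\a(R-\tx)\neq 0$ for all $\a\in\Phi$, and when this fails $Z^0(\tbo)$ is a strictly larger torus, not $\hG_{x+}$-conjugate to $\hT$. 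Thus ``graded regular with centralizer $\hT$'' is equivalent to $\a(R)\neq\a(\tx)$ for all $\a$. It remains to deal with the nonresonance clause: here $\a(\Res(\tbo))+\a(\tx)=\a(R)$, so $(x,0,A^x)$ is resonant iff $\a(R)\in\Z_{<\a(\tx)}$ for some root $\a$. The key observation is that this cannot happen once $A\in\AT(\hT,0)$: by hypothesis $(x_0,0,A^{x_0})$ is $\hT$-regular for some $x_0$, so $\a(R)\neq\a(\tx_0)$ and $\a(R)\notin\Z_{<\a(\tx_0)}$ for all $\a$; applying the latter to both $\a$ and $-\a$ forces, if $\a(R)$ were an integer $n$, the inequalities $n\geq\a(\tx_0)$ and $n\leq\a(\tx_0)$, whence $n=\a(\tx_0)$, contradicting $\a(R)\neq\a(\tx_0)$. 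Hence $\a(R)\notin\Z$ for all $\a\in\Phi$ --- which in particular proves the characterization $\AT(\hT,0)\cong\{X\in\ft\mid\a(X)\notin\Z\ \forall\a\in\Phi\}$ quoted in the Remark (the reverse inclusion being witnessed by $\tx=0$) --- and consequently no stratum $(x,0,A^x)$ is resonant. Therefore $(x,0,A^x)$ is $\hT$-regular if and only if $\a(\Res(\tA))\neq\a(\tx)$ for every $\a\in\Phi$, and when $H_x=G$ one has $\a(\tx)\in\Z$ while $\a(R)\notin\Z$, so the criterion holds for all $A\in\AT(\hT,0)$. Here the only nonroutine input is the $\pm\a$ argument just given.
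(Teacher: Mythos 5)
Your proof is correct and follows essentially the same route as the paper: for $r>0$ the point is that the representative $\pi_\fs^*(A)$ is independent of $x$ (the paper dismisses this as ``immediate'' where you spell out the reduction to a graded compatible point and the $x$-independence of the centralizer criterion), and for $r=0$ your $\pm\a$ argument showing $\a(\Res(\tA))\notin\Z$ and the ensuing hyperplane/$H_x=G$ discussion are exactly the paper's. The only nitpick is that when $\a(R-\tx)=0$ the centralizer $Z^0(\tbo)$ is a larger reductive group containing a root subgroup, not a ``strictly larger torus,'' but this does not affect the conclusion.
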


\begin{proof}  Choose $y\in\Ao$ such that $(y,r,A^y)$ is $S$-regular.
  When $r > 0$, the functional $A^x$ is induced by
  $\pi_\fs^*(A)$, so it is immediate that $(x,r,A^x)$ is $S$-regular for any $x \in \Ao$
  compatible with $S$.

  Now, suppose that $r = 0$ and $S=\hT$.  The nonresonance condition
  states that for all $\a \in \Phi$, $\a(\Res(\tA))=\a(\Res(\tA - \ty
  \ddz)) + \a(\ty) \notin \Z_{<\a(\ty)}$.  Since this condition holds
  for $-\a$, either $\a(\Res(\tA))$ is not an integer or
  $\a(\Res(\tA)) = \a(\ty)$.  However, if $\a(\Res(\tA)) = \a(\ty)$,
  then $\a(\Res(\tA - \ty \ddz)) = 0$ and $(y, 0, A^y)$ is not a
  regular stratum.  We deduce that $\a(\Res(\tA)) \notin \Z$ for all
  $\a\in \Phi$.

  The set of $x \in \Ao$ such that $\tA - \tilde{x} \ddz$ does not
  determine a regular stratum thus consists of those points for which
  there exists $\a \in \Phi$ such that $\a(\Res(\tA)) =
  \a(\tilde{x})$.  This is a finite union of hyperplanes.  (Since
  $\tx\in\ft_\R$, this condition is vacuous for roots $\a$ with
  $\a(\Res(\tA))\notin\R$.)  Finally, suppose that $H_x=G$.  If the
  corresponding stratum $(x, r, A^x)$ is not regular, then there
  exists $\a$ such that $\a(\Res(\tA)) = \a(\tx)$.  Since $H_x=G$,
  $\a(\tx) \in \Z$.  This contradicts the fact that $\a(\Res(\tA))
  \notin \Z$.  On the other hand, if $H_x\ne G$, there exists $\a$ for
  which $\a(\tx) \notin \Z$.  We can thus choose $A\in\AT(\hT,0)$ such
  that $\a(\Res(\tA)) =\a(\tx)$.
\end{proof}

\begin{cor} If $x$ and $y$ are both compatible with $S$ and $r>0$,
  then the categories $\cCfr_x(S,r)$ and $\cCfr_y(S,r)$ are
  canonically isomorphic.
\end{cor}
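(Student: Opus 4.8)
The plan is to produce, for any two points $x,y\in\Ao$ compatible with $S$, a functor $\Xi_{y,x}\colon\cCfr_x(S,r)\to\cCfr_y(S,r)$ together with an inverse $\Xi_{x,y}$, the construction being symmetric in $x$ and $y$. The mechanism is to move a framed flat bundle to the normal form furnished by Theorem~\ref{diagthm} and then re-read that normal form at the other point. Given an object $\cF=(\sG,\n,(x,r,\b),\phi)$ of $\cCfr_x(S,r)$, Theorem~\ref{diagthm}\eqref{diag1} produces $p\in\hG_{x+}$ and $\tA\in\pi_\fs^*(\fs^\vee_{-r})$ with $[\n]_{p\phi}-\tx\ddz=\tA-\tx\ddz$. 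Since any $p\in\hG_{x+}$ acts trivially on $\hfg^\vee_{x,-r}/\hfg^\vee_{x,-r+}$, the stratum contained in $(\sG,\n)$ with respect to $p\phi$ is again $(x,r,\b)$, so it is $S$-regular and $A:=\rho_\fs(\tA)+\fs^\vee_{0+}$ is an $S$-formal type of depth $r$ in the sense of Definition~\ref{ftdef}. Because $y$ is compatible with $S$ and $r>0$, Proposition~\ref{ftlem} shows that the stratum $(y,r,A^y)$ — with $A^y$ induced by $\tA-\ty\ddz$ — is $S$-regular; moreover $\tA\in\pi_\fs^*(\fs^\vee_{-r})\subseteq\hfg^\vee_{y,-r}$ and $\ty\ddz\in\hfg^\vee_{y,0}\subseteq\hfg^\vee_{y,-r}$ (the containments following from compatibility of $y$ with $\fs$ exactly as in Lemma~\ref{cores}\eqref{cores2}), so $(\sG,\n)$ contains $(y,r,A^y)$ with respect to $p\phi$. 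We then set $\Xi_{y,x}(\cF)=(\sG,\n,(y,r,A^y),p\phi)$.

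For a morphism $\psi\colon\cF\to\cF'$ in $\cCfr_x(S,r)$, with $\psi'=\phi'\circ\psi\circ\phi^{-1}\in\hG_x$ pulling $\b'$ back to $\b$, we take $\Xi_{y,x}(\psi)$ to be $\psi$ itself, now regarded through the framings $p\phi$ and $p'\phi'$; the associated comparison element is $p'\psi'p^{-1}$, and the content is to verify that it lies in $\hG_y$ and carries the $y$-stratum of $\Xi_{y,x}(\cF')$ to that of $\Xi_{y,x}(\cF)$. Since $p'\psi'p^{-1}$ carries the normal form $\tA$ to the normal form $\tA'$, both of which represent the same $S$-formal type $A$ (as $\cF\cong\cF'$) and both of which present $S$-regular strata at $y$ by Proposition~\ref{ftlem}, this follows from a degree-by-degree successive-approximation argument of the kind used to prove Theorem~\ref{diagthm}, exploiting that $\pi_\fs$ respects the Moy-Prasad filtrations at both $x$ and $y$. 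Running the same recipe with $x$ and $y$ exchanged gives $\Xi_{x,y}$; since a bundle already in normal form requires no further gauge change, the composites $\Xi_{x,y}\circ\Xi_{y,x}$ and $\Xi_{y,x}\circ\Xi_{x,y}$ are the identity up to the canonical isomorphisms ``multiplication by $p$'', whence $\Xi_{y,x}$ is an equivalence, and one reads off the cocycle relations $\Xi_{x,x}=\id$ and $\Xi_{z,y}\circ\Xi_{y,x}=\Xi_{z,x}$ that make the family canonical.

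The step I expect to be the main obstacle is well-definedness of $\Xi_{y,x}$ on objects and, closely related, the verification that the morphism comparison elements land in $\hG_y$ rather than merely in $\hG_x$. Two normalizations $(p,\tA)$ and $(p',\tA')$ of the same $\cF$ satisfy $p'p^{-1}\in\hG_{x+}$ and $(p'p^{-1})\cdot\tA=\tA'$; the uniqueness assertion Theorem~\ref{diagthm}\eqref{diag3} (applicable because $\tA$ and $\tA'$ determine the same stratum $(x,r,\b)$) gives $\tA'\in\tA+\pi_\fs^*(\fs^\vee_{0+})$, so $A$, and hence the stratum $(y,r,A^y)$, is independent of the choice, while part~\eqref{diag2} guarantees that the residual ambiguity in the trivialization is realized by an automorphism of $(\sG,\n)$ supplying the needed canonical isomorphism in $\cCfr_y(S,r)$; proving that this automorphism, and more generally the comparison elements, preserve the $y$-filtration is the one genuinely nontrivial computation. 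If one wants an isomorphism of categories on the nose rather than the canonical equivalence produced above, the cleanest route is to factor both $\cCfr_x(S,r)$ and $\cCfr_y(S,r)$ through the single category of $\AT(S,r)$-framed flat $G$-bundles, which is legitimate precisely because Proposition~\ref{ftlem} shows that $\AT(S,r)$ does not depend on the chosen compatible point.
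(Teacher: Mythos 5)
Your approach matches the paper's: reduce to the $S$-formal type via Theorem~\ref{diagthm}\eqref{diag1} and invoke Proposition~\ref{ftlem} to conclude that the stratum $(y,r,A^y)$ is again $S$-regular. The paper's own proof is a one-liner that keeps the original trivialization $\phi$ and sends $(\sG,\n,(x,r,\b^x),\phi)\mapsto(\sG,\n,(y,r,\b^y),\phi)$, tacitly assuming $\phi$ is already normalized so that $[\n]_\phi\in\pi_\fs^*(\fs^\vee_{-r})$ (as justified by the observation in the proof of the moduli-space theorem that every isomorphism class has such a representative); your explicit passage to $p\phi$ makes that normalization visible, at the cost of the extra well-definedness and cocycle bookkeeping you carry out.
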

\begin{proof} If $A$ is the formal type for
  $\cF_x=(\sG,\n,(x,r,\b^x),\phi)$, then the proposition shows that
  the functor $\cF_x\mapsto \cF_y=(\sG,\n,(y,r,\b^y),\phi)$ is the
  desired isomorphism.
\end{proof}

We can now describe the moduli spaces of these categories of framed
flat $G$-bundles.  Let $\AT_x(\hT,0)$ be the Zariski-open subset of
$\AT(\hT,0)$ consisting of those $A$ satisfying
$\a(\Res(\tA))\ne\a(\tx)$ for all $\a\in\Phi$. It is clear that
$\AT_x(\hT,0)=\AT(\hT,0)$ if and only if $\a(\tx)\in\Z$ for all
$\a\in\Phi$; equivalently, this holds precisely when $H_x=G$.

\begin{thm}\mbox{}   \begin{enumerate} \item If $r>0$, $\AT(S,r)$ is the moduli space of
    $\cCfr_x(S,r)$.
  \item The moduli space of $\cCfr_x(\hT,0)$ is a subset of
    $\AT(\hT,0)$, with equality if and only if $H_x=G$.
\end{enumerate}
\end{thm}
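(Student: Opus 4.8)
The plan is to construct mutually inverse maps between the set $\cCfr_x(S,r)/{\cong}$ of isomorphism classes and the relevant space of formal types, with Theorem~\ref{diagthm} as the engine. First I would define, for an object $\cF=(\sG,\n,(x,r,\b),\phi)$, the invariant $\Theta(\cF)$: by Theorem~\ref{diagthm}\eqref{diag1} there exist $p\in\hG_{x+}$ and $\tA\in\pi_\fs^*(\fs_{-r}^\vee)$ with $[\n]_{p\phi}-\tx\ddz=\tA-\tx\ddz$, and I set $\Theta(\cF)=\rho_\fs(\tA)+\fs^\vee_{0+}\in\fs^\vee_{-r}/\fs^\vee_{0+}$. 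This is well defined: a second normal form $\tA'$ is obtained from $\tA$ by a gauge transformation by an element of $\hG_{x+}\subseteq\hG_x$ (the ratio of the two elements of $\hG_{x+}$ used), and both $\tA,\tA'$ determine the stratum $\b$, since $\Ad^*(\hG_{x+})$ acts trivially on $\hfg^\vee_{x,-r}/\hfg^\vee_{x,-r+}$ (Lemma~\ref{actlem}\eqref{act3}); Theorem~\ref{diagthm}\eqref{diag3} then gives $\tA-\tA'\in\pi_\fs^*(\fs^\vee_{0+})$, so $\Theta(\cF)$ is unambiguous.

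Next I would check that $\Theta(\cF)$ is a formal type. The projection $\pi_\fs$ is graded (the residue trace form pairs $\hfg_x(\ell)$ only with $\hfg_x(-\ell)$, so orthogonal projection onto $\fs$ preserves the $\Ao$-grading), so for $r>0$ the homogeneous leading term of the stratum $\b=A^x$ is $\pi_\fs^*$ of the degree $-r$ part of $\rho_\fs(\tA)$; being the leading term of an $S$-regular stratum, it is fixed by $S$ (Lemma~\ref{cores}\eqref{cores6}) and has connected centralizer a maximal torus, necessarily $S$, hence is regular and nonzero. Thus the smallest congruence ideal in $A^\perp$ is $\fs_{r+}$ and $A\in\AT(S,r)$; when $r=0$, Proposition~\ref{ftlem} additionally pins $A$ into the open subset $\AT_x(\hT,0)$. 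For surjectivity I would go the other way: given $A\in\AT(S,r)$ (resp.\ $A\in\AT_x(\hT,0)$ when $r=0$), the flat bundle $\n_A$ with matrix $\pi_\fs^*(A)-\tx\ddz$ in a fixed trivialization $\phi_0$ contains the $S$-regular stratum $(x,r,A^x)$ by Definition~\ref{ftdef}\eqref{ft3} (resp.\ Proposition~\ref{ftlem}), so $\cF_A=(\sG_A,\n_A,(x,r,A^x),\phi_0)$ is an object of $\cCfr_x(S,r)$ with $\Theta(\cF_A)=A$.

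For injectivity I would first note that every $\cF$ is framed-isomorphic to $\cF_{\Theta(\cF)}$: writing $\cF$ in normal form $\tA$, any representative of $\Theta(\cF)$ in $\pi_\fs^*(\fs^\vee_{-r})$ differs from $\tA$ by an element of $\pi_\fs^*(\fs^\vee_{0+})\subseteq\hfg^\vee_{x+}$, which by Theorem~\ref{diagthm}\eqref{diag2} is realized by an $\hG_{x+}$-gauge transformation; composing with the change of trivialization $\phi\mapsto p\phi$ gives a framed isomorphism, since the resulting loop group element lies in $\hG_{x+}$ and hence fixes the depth-$r$ stratum. The one genuinely delicate point — and the step I expect to be the main obstacle — is to show that a framed isomorphism $\psi:\cF\to\cF'$ itself forces $\Theta(\cF)=\Theta(\cF')$. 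Putting both sides in normal form, $\psi$ becomes an element $\psi''\in\hG_x$ with $\psi''\cdot(\tA-\tx\ddz)=\tA'-\tx\ddz$ carrying the stratum of $\cF$ to that of $\cF'$; since the homogeneous leading terms of both strata are regular with connected centralizer exactly $S$ and $\theta_x$ intertwines the $\hG_x$-action on $\hfg^\vee_x(-r)$ with the $H_x$-action (Proposition~\ref{r=0} and Lemma~\ref{cores}), $\psi''$ must lie in $\hG_{x+}\cdot(N(S)\cap\hG_x)$; the framing constraint $\psi'^*(\b')=\b$ — which is precisely what is lost on deframing to $\cC(S,r)$, where a relative affine Weyl group ambiguity survives — then forces the $N(S)$-component into $S\cdot\hG_{x+}$, so after absorbing it $\psi''$ acts through $\hG_{x+}$ and Theorem~\ref{diagthm}\eqref{diag3} yields $\tA-\tA'\in\pi_\fs^*(\fs^\vee_{0+})$, i.e.\ $\Theta(\cF)=\Theta(\cF')$.

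Granting this, $\Theta$ is a bijection onto $\AT(S,r)$ for $r>0$, which is part (1). For $r=0$ it is a bijection onto $\AT_x(\hT,0)$, and since $\AT_x(\hT,0)=\AT(\hT,0)$ exactly when $\a(\tx)\in\Z$ for all $\a\in\Phi$, i.e.\ when $H_x=G$ (as observed just before the theorem), this gives part (2). Throughout, the $r=0$ case runs parallel to $r>0$ except that the leading term of $A^x$ is no longer simply $\pi_\fs^*$ of a graded piece of $A$, which is exactly why Proposition~\ref{ftlem} and the nonresonance condition enter to cut $\AT(\hT,0)$ down to $\AT_x(\hT,0)$.
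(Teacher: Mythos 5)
Your proposal follows the same overall route as the paper: put $[\n]_\phi$ into normal form via Theorem~\ref{diagthm}\eqref{diag1}, extract the formal type via $\rho_\fs$, use part~\eqref{diag3} for well-definedness, part~\eqref{diag2} for injectivity, explicit construction $[\n_{\tA}]_{\id}=\tA$ for surjectivity, and Proposition~\ref{ftlem} to pin down the image $\AT_x(\hT,0)$ when $r=0$. Where you diverge is in how much work you attribute to the well-definedness step. The paper's proof simply says ``Part~\eqref{diag3} shows that this map is well-defined''; you correctly observe that \eqref{diag3} only applies when $\tA$ and $\tA'$ determine \emph{the same} stratum, and so you insert an extra argument: decompose the gauge transformation into an $N(S)$-component and a $\hG_{x+}$-component using regularity of the leading terms, then use the framing constraint to force the $N(S)$-component into $S\cdot\hG_{x+}$. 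This is effectively re-running the internal mechanics of the proof of \eqref{diag3} (which is precisely where the paper shows the intermediate normalizer element $n=qp$ lands in $S_0$ because it fixes the regular leading term $\tA_{-r}$). That is not incorrect, but it duplicates work: once one has used the framing constraint to conclude that $\tA$ and $\tA'$ represent the \emph{same} regular stratum (i.e., $\tA-\tA'\in\hfg^\vee_{x,-r+}$), \eqref{diag3} can be cited directly as a black box. One small caution: your phrase ``the framing constraint $\psi'^*(\b')=\b$ then forces the $N(S)$-component into $S\cdot\hG_{x+}$'' elides exactly how that condition, combined with the automatic gauge relation $\b'=\psi'\cdot\b$, prevents a nontrivial $\hW_S$-element of $N(S)\cap\hG_x$ from appearing; this is the one place where the paper is also terse, and the step deserves to be spelled out rather than asserted. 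Apart from this, the logic and the $r=0$ discussion (including the identification of $\AT_x(\hT,0)$ and its equality with $\AT(\hT,0)$ iff $H_x=G$) match the paper.
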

\begin{proof}

  If $(\sG, \n)$ contains an $S$-regular stratum $(x, r, \b)$, then
  Theorem~\ref{diagthm}\eqref{diag1} shows that there is an element
  $\tA\in\pi^*_\fs(\fs^\vee_{-r})$ and a trivialization $\phi$ of $\n$
  such that $\tA-\tx\ddz$ determines $\b$ and $[\n]_\phi |_{\hfg_{x,
      0}} = \tA$.  Thus, any isomorphism class can be represented by a
  quadruple $\cF=(\sG,\n,(x,r,\b),\phi)$ satisfying these properties.
  We define a map from the moduli space to $\AT(S,r)$, sending the
  class of $\cF$ to the formal type $A=\rho_\fs(\tA)+\fs^\vee_{0+}$.
  Part~\eqref{diag3} shows that this map is well-defined.  Moreover,
  the map is injective: if $\cF'$ is another such good representative
  of an isomorphism class for which $\tA'$ determines the same formal
  type $A$, then part ~\eqref{diag2} shows that $\cF'\cong \cF$.
  Thus, the moduli space of $\cCfr_x(S,r)$ is a subset of $\AT(S,r)$.
  One easily sees that the image lies in $\AT_x(\hT,0)$ when $r=0$.

  It remains to compute the image of the formal types map.  Let $\sG^\triv$ be the trivial principal formal $G$-bundle. If $A\in
  \AT(S,r)$, define a flat structure on $\sG^\triv$ via
  $[\n_{\tA}]_{\id}=\tA$, where $\id$ is the identity trivialization.  The flat
  $G$-bundle $(\sG^\triv,\n_{\tA})$ contains the stratum $(x,r,A^x)$ with
  respect to $\id$.  When $r>0$, the class of
  $(\sG^\triv,\n_{\tA},(x,r,A^x),\id)$ is mapped to $A$, so the formal
  types map is surjective.  If $r=0$, the elements of $\AT_x(\hT,0)$ are
  precisely those formal types for which $(x,r,A^x)$ is a regular
  stratum, and we see that the image is $\AT_x(\hT,0)$ in the same
  way.
\end{proof}

We now turn to the proof of Theorem~\ref{diagthm}.  We begin with two
lemmas.

\begin{lem}\label{redstep}
Suppose that $(\sG, \n)$ contains an $S$-regular stratum $(x, r, \b)$
with respect to the trivialization $\phi$, and assume that 
$[\n]_\phi  \in \pi_\fs^*(\fs^\vee) + \hfg^\vee_{x, \ell-r}$ for some $\ell > 0$.  
\begin{enumerate}
\item\label{redstep1} There exists $p \in \hG_{x, \ell}$ such that 
$[\n]_{p \phi}  \in \pi_\fs^* \circ \rho_\fs ([\n]_\phi) + \hfg^\vee_{x, (\ell-r) +}$.  
\item\label{redstep2} If $q \in \hG_{x, \ell}$ and $r > 0$, then 
\begin{equation*}
\rho_\fs([\n]_{q \phi}) \in \rho_\fs ([\n]_{\phi}) + \fs^\vee_{(\ell-r)+}.
\end{equation*}
\end{enumerate}
\end{lem}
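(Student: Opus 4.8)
The plan is to handle the two parts separately, both resting on the change-of-trivialization calculus of Lemma~\ref{actlem} together with the tame corestriction of Lemma~\ref{cores}. First I would fix notation: work with the $\iota_\tau$-contracted matrix $[\nt]_\phi\in\hfg^\vee$, so that the hypothesis reads $[\nt]_\phi\in\pi_\fs^*(\fs^\vee)+\hfg^\vee_{x,\ell-r}$ and the gauge action is $[\nt]_{g\phi}=\Ad^*(g)([\nt]_\phi)-\tau(g)g^{-1}$; let $\tb=[\nt]_\phi-\tx$ be the representative of $\b$ determined by $\phi$ and $\tbo\in\hfg^\vee_x(-r)$ its homogeneous degree-$(-r)$ component (in this normalization the element $\Res(\tbo)$ of Definition~\ref{nonres} is simply $\tbo$). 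As usual one may assume $x$ is graded compatible with $\fs$, so $\rho_\fs$ and $\pi_\fs^*$ respect the gradings. Since $[\nt]_\phi\in\pi_\fs^*(\fs^\vee)+\hfg^\vee_{x,\ell-r}$ with $\ell>0$, the degree-$(-r)$ part of $[\nt]_\phi$, hence $\tbo$, lies in $\pi_\fs^*(\fs^\vee_{-r})$; therefore $S$ stabilizes $\tbo$ (equivalently $\fz(\tbo)=\fs$), and, exactly as in the proof of Lemma~\ref{cores}\eqref{cores3}, $\rho_\fs(\ad^*(X)(\tbo))=0$ for every $X\in\hfg$. This identity is the engine behind both parts.

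For \eqref{redstep1} I would set $\mu=[\nt]_\phi-\pi_\fs^*\rho_\fs([\nt]_\phi)$; from $\rho_\fs\circ\pi_\fs^*=\id$, Lemma~\ref{cores}\eqref{cores2}, and the compatibility of $\rho_\fs$ with the filtrations one gets $\mu\in\hfg^\vee_{x,\ell-r}$ and $\rho_\fs(\mu)=0$, so the class of $\mu$ modulo $\hfg^\vee_{x,(\ell-r)+}$ lies in $\ker\brho_{\fs,\ell}$ (and if $\ell-r\notin\Crit_x(\fg)$ this class is $0$, so $p=1$ works; assume then $\ell-r$ critical). I would then record the effect of a gauge transformation by $\exp(Y)$ with $Y\in\hfg_{x,\ell}$ of homogeneous part $Y_\ell\in\hfg_x(\ell)$: expanding $\Ad^*(\exp(Y))$ and the Maurer--Cartan term and discarding everything in $\hfg^\vee_{x,(\ell-r)+}$ yields
\begin{equation*}
[\nt]_{\exp(Y)\phi}\equiv[\nt]_\phi+\ad^*(Y_\ell)(\tbo)-\ell\,Y_\ell\pmod{\hfg^\vee_{x,(\ell-r)+}},
\end{equation*}
the term $\ad^*(Y_\ell)(\tbo)$ coming from $\Ad^*(\exp(Y))$ acting on $\tbo$ and $-\ell Y_\ell$ from $\Ad^*(\exp(Y))(\tx)-\tx-\tau(\exp(Y))\exp(-Y)$. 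When $r>0$, $\ell>\ell-r$ forces $Y_\ell\in\hfg^\vee_x(\ell)\subset\hfg^\vee_{x,(\ell-r)+}$, so the effect is just $\ad^*(Y_\ell)(\tbo)$; as Lemma~\ref{cores}\eqref{cores3} identifies $\ker\brho_{\fs,\ell}$ with the image of $\ad^*(\hfg_{x,\ell})(\tbo)$, I may choose $Y$ with $\ad^*(Y)(\tbo)\equiv-\mu$ and take $p=\exp(Y)$, obtaining $[\nt]_{p\phi}\equiv[\nt]_\phi-\mu=\pi_\fs^*\rho_\fs([\nt]_\phi)$. When $r=0$ (so $S=\hT$ and $\ell-r=\ell$) the term $-\ell Y_\ell$ survives and the operator $Y_\ell\mapsto\ad^*(Y_\ell)(\tbo)-\ell Y_\ell=-(\ad(\tbo)+\ell)(Y_\ell)$ acts on a root line $\fu_\a z^m\subset\hfg_x(\ell)$ (with $\a(\tx)+m=\ell$) by $-(\a(\tbo)+\a(\tx)+m)$, which is nonzero since $-m\in\Z_{<\a(\tx)}$ and $(x,0,\b)$ is nonresonant; as $\mu_\ell\in\ker\rho_{\hft}\cap\hfg^\vee_x(\ell)$ is a sum of root-line terms, $\ad^*(Y_\ell)(\tbo)-\ell Y_\ell=-\mu_\ell$ is solvable and $p=\exp(Y_\ell)$ works. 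In both cases $p\in\hG_{x,\ell}\subset\hG_{x+}$ and fixes the stratum.

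For \eqref{redstep2} I would invoke Lemma~\ref{actlem}\eqref{act4}, $[\nt]_{q\phi}-\tx\in\Ad^*(q)([\nt]_\phi-\tx)+\hfg^\vee_{x,\ell}$, and apply $\rho_\fs$. Since $\rho_\fs(\hfg^\vee_{x,\ell})\subset\fs^\vee_\ell\subset\fs^\vee_{(\ell-r)+}$ (here $r>0$), it remains to see $\rho_\fs\bigl((\Ad^*(q)-\id)([\nt]_\phi-\tx)\bigr)\in\fs^\vee_{(\ell-r)+}$. Writing $q=\exp(Z)$, the element $(\Ad^*(q)-\id)([\nt]_\phi-\tx)$ lies in $\hfg^\vee_{x,\ell-r}$ with degree-$(\ell-r)$ part $\ad^*(Z_\ell)(\tbo)$, which is killed by $\rho_\fs$; since $\rho_\fs$ respects the grading, its $\rho_\fs$-image drops into $\fs^\vee_{(\ell-r)+}$. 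Re-adding $\rho_\fs(\tx)$ gives the claim.

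I expect the bookkeeping in \eqref{redstep1} to be the main obstacle: establishing the congruence for $[\nt]_{\exp(Y)\phi}$ modulo $\hfg^\vee_{x,(\ell-r)+}$ and recognizing that the Maurer--Cartan term $-\ell Y_\ell$ is harmless exactly when $r>0$, whereas for $r=0$ one is forced to use the nonresonance hypothesis to invert $\ad(\tbo)+\ell$. Part \eqref{redstep2} is comparatively soft once one observes that restriction to $\fs$ annihilates the leading $\ad^*$-correction, so the perturbation visible on $\fs$ is one graded step smaller than a naive estimate would suggest.
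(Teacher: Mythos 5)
Your proof is correct and follows essentially the same route as the paper's: part (1) for $r>0$ uses Lemma~\ref{cores}\eqref{cores3} to pick the exponential, part (1) for $r=0$ uses the root-space decomposition and nonresonance to invert the degree-$\ell$ operator, and part (2) rests on $\rho_\fs$ killing $\ad^*(\hfg)(\tbo)$. The only differences are presentational — you organize the $r>0$ and $r=0$ branches around a single derived congruence for $[\nt]_{\exp(Y)\phi}$ modulo $\hfg^\vee_{x,(\ell-r)+}$ and solve the degree-$\ell$ piece all at once rather than root by root, whereas the paper invokes Lemma~\ref{actlem} parts \eqref{act2}/\eqref{act4} directly and iterates over $\alpha\in\Phi$ one at a time.
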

\begin{proof}
  First, assume $r >0$, so $[\n]_\phi$ is a representative of $\b$. In
  the notation of Lemma~\ref{cores}, $[\n]_\phi - \pi_\fs^* \circ
  \rho_\fs ([\n]_\phi) + \hfg^\vee_{x, (\ell-r)+} \in \ker
  (\brho_\fs)$.  Part~\eqref{cores3} of the Lemma implies that there
  exists $X \in \hfg_{x, \ell}$ such that $[\n]_\phi + \ad^*(X)
  ([\n]_\phi) - \pi_\fs^* \circ \rho_\fs([\n]_\phi) \in \hfg^\vee_{x,
    (\ell-r)+}$.  Set $p = \exp(X) $.  By
  Lemma~\ref{actlem}\eqref{act4}, $[\n]_{p \phi} - \tx \frac{dz}{z}
  \in \Ad^*(p) \left( [\n]_\phi - \tx \ddz \right) + \hfg^\vee_{x,
    \ell}$.  Since $\Ad^*(p) ([\n]_\phi - \tx \ddz) \in [\n]_\phi -
  \tx \ddz + \ad^* (X)([\n]_\phi)+ \hfg^\vee_{x, (\ell-r)+}$ and
  $\hfg^\vee_{x, \ell}\subset \hfg^\vee_{x, (\ell-r) +}$, it follows
  from the observations above that $[\n]_{p \phi} - \tx \ddz \in
  \pi^*_\fs \circ \rho_\fs ([\n]_\phi) - \tx \ddz + \hfg^\vee_{x,
    (\ell-r)+}.$ This proves the first statement in the irregular
  singular case.
  
  Writing $q = \exp(Y)$ for $Y\in \hfg_{x, \ell}$, we obtain
\begin{multline*}
[\n]_{q \phi} - \tx \ddz \in \Ad^*(q) ([\n]_\phi - \tx \ddz) + \hfg^\vee_{x, (\ell-r)+}
\\ = [\n]_\phi  + \ad^*(Y) ([\n]_\phi)- \tx \ddz + \hfg^\vee_{x, (\ell-r)+}.
\end{multline*}
Since $\ad^*(Y) ([\n]_\phi) + \hfg_{x, (\ell-r)+} \in \ker
(\brho_\fs)$ by Lemma~\ref{cores}\eqref{cores3}, the second statement
follows.

Now, assume that $r = 0$ and $S=\hT$, and write $[\n]_\phi - \tx \ddz
\in \pi_{\hft}^* \rho_{\hft} ([\n]_\phi) - \tx \ddz + \sum_{\psi \in
  \Phi}Y_\psi \ddz + \hfg^\vee_{x, \ell+}$, where $Y_\psi \in
\hfu_{\psi}\cap\hfg_{x, \ell}.$ Choose $\a \in \Phi$ such that $\ell -
\a(\tx) \in \Z_{> -\a(\tx)}$; this condition is necessarily satisfied
by $\a$ if $Y_\a \ddz \in \hfg^\vee_{x, \ell} \backslash \hfg^\vee_{x,
  \ell+}$.  Recall that the graded representative $\tbo \in [\n] _\phi
- \tx \ddz + \hfg^\vee_{x, 0+}$ satisfies $b = \Res(\tbo) \in \ft$.
The nonresonance condition ensures that $\ell + \a(b) \ne 0$.  Define
$X_\a = \frac{1}{\ell + \a(b)} Y_\a$.  By
Lemma~\ref{actlem}\eqref{act2},
\begin{equation*}
\begin{aligned}
[\n]_{\exp(X_\a)\phi} - \tx \ddz & \in \Ad^*(\exp(X_\a)) ([\n]_\phi - \tx \ddz) - \ell X_\a \ddz + \hfg^\vee_{x, \ell+} \\
& = [\n]_\phi - \tx \ddz - (\a(b) + \ell)X_\a \ddz + \hfg^\vee_{x, \ell+} \\
& = \pi_{\hft}^* \rho_{\hft} ([\n]_\phi) - \tx \ddz + 
\sum_{\substack{\psi \in \Phi \\ \psi \ne \a}} Y_\a \ddz + \hfg^\vee _{x, \ell+}
\end{aligned}
\end{equation*}
Repeating this process, we can kill off all of the off-diagonal terms
$Y_\a \ddz$. This completes the proof.
\end{proof}

\begin{lem}\label{tdiff}  Suppose that $x \in \Ao$ is compatible with $S$.
  Let $Z \ddz \in \pi_\fs^*(\fs_\ell^\vee)$ for $\ell > 0$; here,
  $Z\in \fs_\ell$.  Then, there exists $s \in S_\ell$ such that $(ds)
  s^{-1} \in (Z - \ad(\tx) (\frac{1}{\ell} Z)) \ddz + \hfg^\vee_{x,
    \ell+}$.  In particular, $\pi_\fs^*\circ \rho_\fs ((ds) s^{-1})
  \in Z \ddz + \hfg^\vee_{x, \ell +}$.
\end{lem}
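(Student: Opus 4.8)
The plan is to take $s=\exp(\tfrac1\ell Z)$ and compute its logarithmic derivative directly. Since $\ell>0$ and $\tfrac1\ell Z\in\fs_\ell$, the exponential restricts to a bijection $\fs_\ell\to S_\ell$ (as in the proof of Lemma~\ref{c3}), so $s\in S_\ell$ is well defined, and $(ds)s^{-1}=(\tau(s)s^{-1})\ddz$. Writing $W=\tfrac1\ell Z$, the standard series for the right logarithmic derivative of $\exp$ gives
\[
\tau(s)s^{-1}=\sum_{n\ge 1}\frac{1}{n!}(\ad W)^{n-1}\bigl(\tau(W)\bigr).
\]
Since $\tau$ preserves the Moy--Prasad filtration, $\tau(W)\in\hfg_{x,\ell}$, and since $W\in\hfg_{x,\ell}$ with $\ell>0$, using $[\hfg_{x,a},\hfg_{x,b}]\subset\hfg_{x,a+b}$ the terms with $n\ge 2$ lie in $\hfg_{x,2\ell}\subset\hfg_{x,\ell+}$ (so the series also converges). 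Hence $\tau(s)s^{-1}\equiv\tau(W)\pmod{\hfg_{x,\ell+}}$.

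Next I would identify the class of $\tau(W)$. As $x$ is compatible with $\fs$, we have $Z\in\fs_\ell=\fs\cap\hfg_{x,\ell}$, so Proposition~\ref{ev}\eqref{ev2} for the adjoint representation gives $(\tau+\ad(\tx))(Z)-\ell Z\in\hfg_{x,\ell+}$; dividing by $\ell$, $\tau(W)\equiv Z-\ad(\tx)(\tfrac1\ell Z)\pmod{\hfg_{x,\ell+}}$. Combining with the previous paragraph, and using that the identification of $\Om^1_F(\hfg)$ with $\hfg^\vee$ carries $\hfg_{x,\ell+}\ddz$ onto $\hfg^\vee_{x,\ell+}$ (Proposition~\ref{dual}\eqref{dual4}), this yields the first assertion
\[
(ds)s^{-1}\in\bigl(Z-\ad(\tx)(\tfrac1\ell Z)\bigr)\ddz+\hfg^\vee_{x,\ell+}.
\]

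For the final clause, the key point is that $\ad(\tx)(\tfrac1\ell Z)$ is orthogonal to $\fs$: for every $Y\in\fs$, invariance of the form and commutativity of the Cartan subalgebra give $\left<[\tx,Z],Y\right>=\left<\tx,[Z,Y]\right>=0$. Thus $\rho_\fs$ kills $\ad(\tx)(\tfrac1\ell Z)\ddz$, so $\rho_\fs\bigl((Z-\ad(\tx)(\tfrac1\ell Z))\ddz\bigr)=\rho_\fs(Z\ddz)$; since $Z\ddz\in\pi_\fs^*(\fs^\vee)$ and $\rho_\fs\circ\pi_\fs^*=\id$, we get $\pi_\fs^*\circ\rho_\fs\bigl((Z-\ad(\tx)(\tfrac1\ell Z))\ddz\bigr)=Z\ddz$. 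Finally, $\pi_\fs^*\circ\rho_\fs$ maps $\hfg^\vee_{x,\ell+}$ into itself (the containment $\rho_\fs(\hfg^\vee_{x,\ell+})\subset\fs^\vee_{\ell+}$ is recorded before Lemma~\ref{cores}, and $\pi_\fs^*(\fs^\vee_{\ell+})\subset\hfg^\vee_{x,\ell+}$ follows from Lemma~\ref{cores}\eqref{cores2}), so applying $\pi_\fs^*\circ\rho_\fs$ to the displayed membership gives $\pi_\fs^*\circ\rho_\fs((ds)s^{-1})\in Z\ddz+\hfg^\vee_{x,\ell+}$. The entire argument is a one-line computation plus filtration bookkeeping; the only mildly subtle point is that $\tau$ need not preserve $\fs$, which is why the $\ad(\tx)$-correction appears in the first assertion and then disappears upon projecting back onto $\pi_\fs^*(\fs^\vee)$.
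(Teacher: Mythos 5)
Your proof is correct and follows essentially the same route as the paper's: both take $s=\exp(\tfrac{1}{\ell}Z)$, identify $\tau(\tfrac{1}{\ell}Z)$ modulo $\hfg_{x,\ell+}$ via Proposition~\ref{ev}, and use invariance of the form together with commutativity of $\fs$ to see that $\rho_\fs$ kills the $\ad(\tx)$-correction term. Your series expansion of the logarithmic derivative and the explicit filtration bookkeeping for $\pi_\fs^*\circ\rho_\fs$ are slightly more careful than the paper's one-line computation, but the substance is identical.
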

\begin{proof}
  Take $s = \exp(\frac{1}{\ell} Z)$.  Then, $(ds) s^{-1} =
  \frac{1}{\ell} \tau (Z) \frac{dz}{z}$.  However, $\frac{1}{\ell}\tau
  (Z) \in - \ad (\tx)(\frac{1}{\ell}Z) + Z + \hfg_{x,\ell+} $.
  Consider $\ad^*(\tx) (\frac{1}{\ell}Z \ddz)$.  If $X \in \fs$, then
  $\ad^*(\tx) (\frac{1}{\ell}Z \ddz) \left( X \right) = -\ad^*(X)
  (\frac{1}{\ell}Z \ddz) \left(\tx \right) = 0$.  It follows that
  $\rho_\fs (\ad^*(\tx) (\frac{1}{\ell} Z \ddz) ) = 0$ by definition
  of the restriction map.  Therefore, $\rho_\fs ((ds)s^{-1}) \in \rho_\fs(Z
  \frac{dz}{z}) + \fs^\vee_{\ell+}$.
\end{proof}

\begin{proof}[Proof of Theorem~\ref{diagthm}]

  By Theorem~\ref{compthm}, we may assume that $S$ is graded
  compatible with $x$.  
  Suppose that $\phi'$ is a trivialization for which $[\n]_{\phi'} \in
  \pi_\fs^*(\fs^\vee) + \hfg^\vee_{x, \ell-r}$ with $\ell >0$.
  Writing $\rho_\fs([\n]_\phi) = \sum_{i\ge -r} A_i$ with $A_i \in
  \fs^\vee (i),$ we have $[\n]_{\phi'}=\sum_{-r\le i<\ell-r} \tA_i
  +\hfg^\vee_{x,\ell-r}$.  We suppose further that if $\ell - r > 0$,
  then $A_i=0$ for $0 < i < \ell -r$.  We now show that we can
  construct $p_\ell$, in $\hG_{x,\ell-r}$ if $\ell-r>0$ and in
  $\hG_{x,\ell}$ if not, such that $[\n]_{p_\ell\phi'}=\sum_{-r\le
    i<\ell-r} \tA_i +\pi^*_\fs(\fs(\ell-r))+\hfg^\vee_{x,(\ell-r)+}$.
  If we can do this, then applying this process recursively, starting
  with $\phi$ and the smallest $\ell'$ for which
  $\hfg^\vee_{x,(\ell'-r)+}\ne\hfg^\vee_{x,-r+}$, we obtain a
  well-defined $p=\prod_{\ell=\ell'}^\infty p_\ell$ satisfying
  part~\eqref{diag1}.  Moreover, we can set $A=\sum_{i = -r}^0 A_i$.

  By Lemma~\ref{redstep}, there exists $q \in \hG_{x, \ell}$ such that
  $[\n]_{q \phi'} - \tx \ddz \in \pi_\fs^*\circ\rho_\fs ([\n]_{\phi'}) -
  \tx \ddz+ \hfg^\vee_{x, (\ell-r)+}$.  If $\ell-r\le 0$, set
  $p_\ell=q$.  On the other hand, if $\ell - r > 0$, Lemma~\ref{tdiff}
  shows that there exists $s \in S_{\ell -r}$ such that $\rho_\fs
  ((ds) s^{-1}) \in A_{\ell - r} + \fs^\vee_{(\ell -r )+}$.  In
  particular, $ \rho_\fs ([\n]_{s q \phi}) \in \sum_{i = -r}^0 A_i +
  \fs^\vee_{ (\ell -r)+}$.  Applying Lemma~\ref{redstep} once more,
  we obtain $q' \in \hG_{x, \ell}$ such that $ [\n]_{q' s q \phi}
  - \tx \ddz \in \sum_{i = -r}^0 \tA_i - \tx \ddz + \hfg^\vee_{x,
    (\ell-r)+}$.  The desired change of trivialization is given by $p_\ell = q' s q$.  

  Part~\eqref{diag2} now follows from the observation that whenever $X \in
  \hfg^\vee_{x, 0+}$, the preceding algorithm produces an element $p
  \in \hG_{x, 0+}$ such that $p \cdot (\tA + X) = \tA$.

    It remains to prove
  part~\eqref{diag3}.  Recall that Lemma~\ref{actlem}\eqref{act3}
  states that $p \cdot \tA^1- \tx \ddz \in \Ad^*(p) (\tA^1 - \tx \ddz)
  + \hfg_{x, 0+}^\vee$.  It follows that $\Ad^*(p) (\tA^1 - \tx \ddz)
  \in \tA^2- \tx \ddz + \hfg_{x, 0+}^\vee$.  Since $\tA^1$ and $\tA^2$
  determine the same regular stratum, $\Ad^*(p) (\tA^1 - \tx \ddz) \in
  \tA^1 - \tx \ddz + \hfg^\vee_{x, -r+}$.

  We first consider the case $r > 0$.  By
  Proposition~\ref{leadingterm}, there exists $q \in \hG_{x, r}$ such
  that $\Ad^*(q p) (\tA^1) \in \tA^2 +\pi_\fs^*(\fs^\vee_{0+})$, so $n
  = q p \in N(S)\cap\hG_{x}$.  Write $\tA^1 \in \tA_{-r} +
  \pi_\fs^*(\fs^\vee_{-r+})$ where $\tA_{-r} \in
  \pi_\fs^*(\fs^\vee(-r))$; note that $\tA^2 \in \tA_{-r} +
  \pi_\fs^*(\fs_{-r+}^\vee)$ by assumption.  Fix a $w$-diagonalizer
  $g$ for $S$ satisfying Theorem~\ref{gx}, and write $n' = g^{-1} n g
  \in N(E)$.  A direct calculation, using the fact that $\tau(n')
  (n')^{-1} \in \ft(E)$, shows that $\tau(n) n^{-1} \in \tau(g) g^{-1}
  -\Ad(n)(\tau(g) g^{-1})+ \fs = - \tx + \Ad(n) (\tx)+ \fs$.
  Therefore,
\begin{multline*}
  \tau(\Ad^*(n) (\tA_{-r})) + \ad^*(\tx) (\Ad^*(n) (\tA_{-r})) \\
  = \Ad^*(n) (\tau(\tA_{-r})) + \ad^*(\tau(n) n^{-1}) (\Ad^*(n)
  (\tA_{-r}))
  + \ad^*(\tx) (\Ad^*(n) (\tA_{-r}))\\
  = \Ad^*(n) (\tau(\tA_{-r}) + \ad^*(\tx) (\tA_{-r})) = -r
  \Ad^*(n) (\tA_{-r}).
\end{multline*}
We deduce that $\Ad^*(n) (\tA_{-r} ) = \tA_{-r}$.  Since $\tA_{-r}$ is
regular semisimple, $n\in S\cap\hG_x=S_0$.  Using the facts
that $(dn) n^{-1} \in \Ad^*(n) (\tx) - \tx + \fs^\vee_{0+}$ and
$\rho_s$ commutes with $\Ad^*(n)$, we see that $\rho_\fs (n \cdot
\tA^1) \in \rho_\fs(\tA^1) + \fs^\vee_{0+}$.  We now apply
Lemma~\ref{redstep}~\eqref{redstep2} to obtain
\begin{equation*}
  \rho_\fs(\tA^2) \in \rho_\fs(q^{-1}n \cdot \tA^1)+ \fs^\vee_{0+}= \rho_\fs (n \cdot \tA^1) +
  \fs^\vee_{0+} = \rho_\fs(\tA^1) + \fs^\vee_{0+}.
\end{equation*}
This proves part~\eqref{diag3} when $r >0$.

Finally, assume that $r = 0$.  Since $\tA^1$ and $\tA^2$ both
determine the same stratum, $\tA^1 - \tx \ddz + \hfg^\vee_{x+} = \tA^2
- \tx \ddz + \hfg^\vee_{x+}$.  We thus have
$\tA^1-\tA^2\in\pi^*_{\hft}(\hft_0^\vee)\cap\hfg^\vee_{x,
  0+}=\pi^*_{\hft}(\hft^\vee_{0+})$.

\end{proof}
\subsection{Flat $G$-bundles  and orbits of formal types}

Let $S$ be a maximal torus of type $\g$, which for the time being is
not assumed to be regular.  Let $W_S = N(S) /S$ and $\hW_S=N(S) / S_0$
be the relative Weyl group and the relative affine Weyl group
associated to $S$.  Note that $\hW_S \cong W_S \ltimes S / S_0$.  The
group $\hW_S$ consists of the Galois fixed points of $N(S(E)) / S_0(E)
\cong \hW$.

\begin{prop}
  The group $W_S$ is isomorphic to a subgroup of the centralizer in
  $W$ of a representative $w$ for $\g$.
\end{prop}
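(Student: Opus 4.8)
The plan is to transport $N(S)$ to the normalizer $N$ of the split torus $T$ over a splitting field and then extract the centralizer condition from Galois invariance, exactly the way the correspondence between conjugacy classes of tori and conjugacy classes in $W$ is set up in Section~\ref{torus}.

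First I would fix, as in Section~\ref{torus}, a splitting field $E$ for $\fs$, a generator $\s$ of $\Gamma=\Gal(E/F)$, and a $w$-diagonalizer $g\in G(E)$, so that $g^{-1}S(E)g=T(E)$ and $\tw:=g^{-1}\s(g)\in N(E)$ has image $w$ in $W=N(E)/T(E)$; here $W$ is a constant group since $G$, $T$, $N$ are defined over $k\subseteq F$, and in particular $\s$ acts trivially on it. If $n\in N(S)\subseteq G(F)$, then $n$ normalizes $S$ over $F$, hence over $E$, so conjugation by $g^{-1}ng$ preserves $g^{-1}S(E)g=T(E)$; thus $g^{-1}ng\in N(E)$, and composing with the reduction $N(E)\to N(E)/T(E)=W$ gives a homomorphism $\phi\colon N(S)\to W$, $\phi(n)=g^{-1}ng \bmod T(E)$.

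Next I would compute the kernel: $\phi(n)=1$ exactly when $g^{-1}ng\in T(E)$, i.e.\ when $n\in gT(E)g^{-1}\cap G(F)=S(E)\cap G(F)=S$. Hence $\phi$ induces an injection $W_S=N(S)/S\hookrightarrow W$. To see the image lands in the centralizer of $w$, use that $n$ is Galois-fixed, so
\begin{equation*}
\s(g^{-1}ng)=\s(g)^{-1}\,n\,\s(g)=\tw^{-1}(g^{-1}ng)\tw .
\end{equation*}
Applying the reduction map $N(E)\to W$ to both sides and using that $\s$ acts trivially on $W$, the left-hand side has image $\phi(n)$ while the right-hand side has image $w^{-1}\phi(n)w$; therefore $\phi(n)$ commutes with $w$. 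Consequently $W_S\cong\phi(N(S))$ is a subgroup of the centralizer of $w$ in $W$, as claimed.

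I do not anticipate a genuine obstacle; the only points requiring a little care are that $g^{-1}ng$ really lies in $N(E)$ (rather than in some larger group containing $T(E)$) and that the Galois action on $W\cong N(E)/T(E)$ is trivial, both immediate from $G$, $T$, $N$ being defined over $k$. I would also note in passing that this argument uses neither regularity of $\g$ nor compatibility of $S$ with a point of $\Ao$, so the statement holds for an arbitrary maximal torus of type $\g$.
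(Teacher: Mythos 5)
Your argument is correct and follows the same basic strategy as the paper: transport $N(S)$ into $N(E)$ via the $w$-diagonalizer $g$, observe that the kernel is $S$, and use the Galois twist $g^{-1}\s(g)$ to extract commutation with $w$. The final step, however, is a genuine (if minor) variant. The paper detects $uw=wu$ by letting both sides act on a regular element $t=\Ad(g^{-1})(s)\in\ft(E)$ with $\s(t)=w^{-1}t$, then invokes the freeness of the $W$-action on regular elements; you instead apply $\s$ directly to $g^{-1}ng\in N(E)$, reduce modulo $T(E)$, and use that $\Gamma$ acts trivially on the constant group $W$. Your route is a bit more economical since it dispenses with the auxiliary regular element; the paper's version makes the role of regularity of $t$ explicit, which dovetails with how regular elements of $\ft(E)$ are used elsewhere in Sections~\ref{torus}--\ref{sec:RS}. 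Your closing remark that the argument needs neither regularity of $\gamma$ nor compatibility of $S$ with a point of $\Ao$ is also consistent with the paper, which proves this proposition before reimposing those hypotheses on $S$.
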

\begin{proof}
  In order to prove this, let $n \in N(S)$ and choose a regular
  element $s \in \fs$. Choose a $w$-diagonalizer $g$ for $S$. Then,
  $s=\Ad(g)(t)$ for some $t \in \ft(E)$ that satisfies $\s (t) =
  w^{-1} t$.  Write $n' = g^{-1} n g$.  It is clear that $n' \in
  N(E)$.  Moreover, since $N(E)=N\cdot T(E)$, one sees that $\s (n' t
  (n')^{-1}) = n' \s(t) (n')^{-1}$.  Thus, if $u \in W$ is the image
  of $n'$, $w^{-1} u t = u w^{-1} t$.  Since $t$ is regular, $w^{-1} u
  = u w^{-1}$.  It follows that $n\mapsto u$ defines a monomorphism
  from $N(S)/S$ into the centralizer of $w$ in $W$.
\end{proof}

We now show that there is an action $\hvr$ of $N(S)$ on $\fs^\vee /
\fs^\vee_{0+}$ by affine transformations given by the formula
\begin{equation*}
\hvr(n) (X + \fs^\vee_{0+}) = \Ad^*(n) (X) - \rho_\fs( (dn) n^{-1}) + \fs^\vee_{0+}.
\end{equation*}

\begin{prop}\label{vrho}
  The map $\hvr : N(S) \to \Aff(\fs^\vee / \fs^\vee_{0+})$ defined
  above is a group homomorpism.  The kernel of $\hvr$ contains $S_0$,
  so $\hvr$ induces a group action $\vr$ of $\hW_S$ on $\fs^\vee /
  \fs^\vee_{0+}$.  Finally, $\fs^\vee_{-r} / \fs^\vee_{0+} \subset
  \fs^\vee/ \fs^\vee_{0+}$ is a finite dimensional submodule for all
  $r\ge0$, and the quotient action on $\fs^\vee / \fs^\vee_0$ is comes
  from the coadjoint action of $N(S)$.
\end{prop}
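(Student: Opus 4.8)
\textbf{Proof proposal for Proposition~\ref{vrho}.}

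The plan is to verify the assertions in the stated order; the homomorphism property is purely formal, the descent through $S_0$ is a short filtration computation, and the submodule statement carries the real content.

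First I would check that $\hvr$ is a homomorphism. The only input is the additivity of the right Maurer--Cartan form, $(d(n_1n_2))(n_1n_2)^{-1}=(dn_1)n_1^{-1}+\Ad(n_1)\bigl((dn_2)n_2^{-1}\bigr)$, together with the $F$-linearity of $\Ad^*(n_1)$ and the fact that for $n\in N(S)$ the automorphism $\Ad(n)$ preserves $\fs$, so that $\rho_\fs\circ\Ad^*(n)=\Ad^*(n)\circ\rho_\fs$; substituting into the definition of $\hvr$ gives $\hvr(n_1n_2)=\hvr(n_1)\hvr(n_2)$. For the kernel, take $s\in S_0$. Since $S$ is abelian, $\Ad^*(s)$ is the identity on $\fs^\vee$, so it suffices to show $\rho_\fs((ds)s^{-1})\in\fs^\vee_{0+}$. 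Writing $(ds)s^{-1}=\tau(s)s^{-1}\ddz$ and passing to a splitting field, one has $\tau(s)s^{-1}\in\fs_{0+}$ because the logarithmic derivative of a bounded torus element raises valuation; since $\fs_{0+}\subset\hfg_{x,0+}$ and $\left<\hfg_{x,0},\hfg_{x,0+}\right>_\ddz=0$, the functional $\left<\tau(s)s^{-1},\cdot\right>_\ddz$ kills $\fs_0$, hence lies in $(\fs_0)^\perp\cap\fs^\vee=\fs^\vee_{0+}$ by Proposition~\ref{dual}. Thus $S_0\subset\ker\hvr$ and $\hvr$ descends to an action $\vr$ of $\hW_S=N(S)/S_0$.

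Next, the submodule statement. Finite-dimensionality of $\fs^\vee_{-r}/\fs^\vee_{0+}$ is immediate (it is $\bigoplus_{-r\le j\le0}\fs^\vee(j)$, a sum over finitely many critical numbers). Write $\hvr(n)(X+\fs^\vee_{0+})=\Ad^*(n)(X)-c_n+\fs^\vee_{0+}$ with $c_n=\rho_\fs((dn)n^{-1})$. The linear part $\Ad^*(n)$ preserves $\fs^\vee_{-r}$: the Moy--Prasad filtration on the torus $S$ is the unique one, hence is stable under the automorphism $\Ad(n)$ for $n\in N(S)$, so the dual filtration is $\Ad^*(n)$-stable. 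Therefore the entire claim reduces to showing $c_n\in\fs^\vee_0$. Granting this, since $\fs^\vee_{0+}\subset\fs^\vee_0\subset\fs^\vee_{-r}$ for $r\ge0$, each $\fs^\vee_{-r}/\fs^\vee_{0+}$ is preserved, and modding out by the submodule $\fs^\vee_0/\fs^\vee_{0+}$ the translation part disappears, so the quotient action on $\fs^\vee/\fs^\vee_0$ is $X\mapsto\Ad^*(n)(X)$, i.e. the coadjoint action of $N(S)$.

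The main obstacle is the proof that $c_n\in\fs^\vee_0$, and I would handle it with a good diagonalizer. Since the abstract module $(\fs^\vee/\fs^\vee_{0+},\hvr)$ depends only on the $\hG$-conjugacy class of $S$, Theorem~\ref{torcon} (or Theorem~\ref{compthm}) lets us assume $\fs$ is graded compatible with some $x\in\Ao$, and then Theorem~\ref{gx} supplies a $w$-diagonalizer $g=z^{-t}h$ with $h\in G$ and $\tau(g)g^{-1}=-t\in\ft_\Q$ \emph{constant}. Write $n=gn'g^{-1}$ with $n'=g^{-1}ng\in N(T(E))$; using $N(T(E))=T(E)\cdot N(T)(k)$ factor $n'=n_0t_1$ with $n_0\in N(T)(k)$ constant and $t_1\in T(E)$, so that $\tau(n')(n')^{-1}=\Ad(n_0)(\tau(t_1)t_1^{-1})\in\ft(E)_0$ has no pole. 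Expanding
\begin{equation*}
\tau(n)n^{-1}=\tau(g)g^{-1}-\Ad(n)(\tau(g)g^{-1})+\Ad(g)\bigl(\tau(n')(n')^{-1}\bigr)
\end{equation*}
and pairing with an arbitrary $Z\in\fs_{0+}$: the first two terms pair through $\left<\tau(g)g^{-1},\cdot\right>=\left<-t,\cdot\right>$ with $Z$ and with $\Ad(n^{-1})Z$ respectively, both in $\fs_{0+}\subset\hfg_{x,0+}$ while $-t\in\hfg_{x,0}$, so those residues vanish; the last term equals $\left<\tau(n')(n')^{-1},\Ad(g^{-1})Z\right>_\ddz$ with $\Ad(g^{-1})Z\in\ft(E)_{0+}$ and $\tau(n')(n')^{-1}\in\ft(E)_0$, whose residue again vanishes. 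Hence $\left<\tau(n)n^{-1},\fs_{0+}\right>_\ddz=0$, so $c_n\in(\fs_{0+})^\perp\cap\fs^\vee=\fs^\vee_0$, which finishes the argument. The only delicate bookkeeping is the Galois descent (work over $E$, then take $\Gamma$-fixed points) and the fact that $\Ad^*(g)$ matches $\fs^\vee_0$ with $\ft(E)^\vee_0$, which holds precisely because $x$ is graded compatible with $\fs$.
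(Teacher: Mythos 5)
Your proposal is correct and follows essentially the same route as the paper: both reduce to a good diagonalizer $g=z^{-t}h$ with constant rational logarithmic derivative, use the same three-term expansion of $(dn)n^{-1}$ via $n'=g^{-1}ng\in N(T(E))$, and establish the key point that the translation part $\rho_\fs((dn)n^{-1})$ lies in $\fs^\vee_0$ (the paper tracks graded pieces under $\rho_\fs$ where you dually check that the residue pairing with $\fs_{0+}$ vanishes, which amounts to the same thing). The only spot worth tightening is the kernel step, where the one-line claim that $\tau(s)s^{-1}\in\fs_{0+}$ for $s\in S_0$ hides the conjugation terms in the nonsplit case; the paper handles this by splitting $s=ts'$ into its constant and pro-unipotent parts, and your argument goes through once that is made explicit.
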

\begin{proof}
  Without loss of generality, assume that $\fs$ is graded compatible
  with $x \in \Ao$.  Choose a $w$-diagonalizer $g \in G(E)$ such that
  $\Ad(g) (\ft(E)) = \fs(E)$ and $(\tau g) g^{-1} \in \tx + \fs(0)$ as
  in Proposition~\ref{vandermonde}.

  Suppose that $n_1, n_2 \in N(S)$.  Then, $\rho_\fs (\Ad^*(n_1) ((d
  n_2) n_2^{-1}) = \Ad^*(n_1) \rho_\fs ((d n_2) n_2^{-1})$.  It
  follows that
\begin{equation*}
\Ad^*(n_1 n_2) (X) - \rho_\fs((d (n_1 n_2)) (n_1 n_2)^{-1}) = 
\Ad^*(n_1) \left[ \Ad^* (n_2) (X) - \rho_\fs ((d n_2 ) n_2^{-1}) \right] 
- d (n_1) n_1^{-1},
\end{equation*}
and thus $\hvr$ is a homomorphism.  Now, take $s \in S_0$.  Write $s =
t s'$, where $t \in S_0 \cap T$ and $s' \in S_{0+}$.  By
Lemma~\ref{bnorm}, $-(dt) t^{-1} \in \ft^\vee_{0+}$.  Since $s' = \exp
(X)$ for some $X \in \fs_{0+}$, $\rho_\fs (-(ds' ) (s')^{-1}) =
\rho_\fs(- \tau (X) \ddz) \in \fs^\vee_{0+}$.  It follows that $S_0$
lies in the kernel of $\hvr$.

We now prove the second half of the proposition.  Let $n \in N(S)$.
There exists $n' \in N(E)$ such that $g (n')g^{-1} = n$.  Then,
\begin{equation*}
\begin{aligned}
  (dn) n^{-1} &= \Ad^*(g n') (-g^{-1} dg) + \Ad^*(g) (dn' (n')^{-1}) + (dg) g^{-1}\\
  & =\Ad^*(g)(dn' (n')^{-1}) - \Ad^*(n) ((dg) g^{-1}) + (dg) g^{-1}.
\end{aligned}
\end{equation*}
Since $(dg) g^{-1} \in \hfg^\vee_x (0)$, it follows that $\rho_s((d g)
g^{-1} ) \subset \fs^\vee(0)$.  Moreover, $\rho_s (\Ad^*(n) ((d g)
g^{-1})) = \Ad^*(n) (\rho_s((d g) g^{-1}))$, so the restriction of the
second two terms in the expression above lie in $\fs^\vee(0)$.  By
Lemma~\ref{bnorm}, $(d n') (n')^{-1} \in \left( \Ad(n') (\tx) -
  \widetilde{n' x} \right) \ddz + \ft_{0+}(E) \ddz \subset \ft_0(E)
\ddz$.  Thus, $\Ad^*(g) ((dn') (n')^{-1}) \in
\pi^*_\fs(\fs^\vee(E)_{0}) \cap \hfg^\vee = \pi^*_\fs(\fs^\vee_0)$. We
conclude that $\rho_\fs((dn) n^{-1}) \in \fs^\vee_0$.  This proves
that $\hvr(n) (X+\fs^\vee_{0+}) + \fs^\vee_0 = \Ad^*(n) (X) +
\fs^\vee_0$ whenever $X \in \fs^\vee$.

Finally, since the action of the Weyl group preserves the Moy-Prasad
filtration on a split torus, it follows that $\Ad^*(n)
(\fs^\vee(E)_{-r}) = \fs^\vee(E)_{-r}$.  Thus, $\Ad^*(n)
(\fs^\vee_{-r}) = \fs^\vee_{-r}$.  Since $(dn) n^{-1} \in \fs^\vee_0$,
it is now clear that $\hvr(n) (\fs^\vee_{-r}) \subset \hvr(n)
(\fs^\vee_{-r})$ whenever $ r \ge 0$.
\end{proof}

From now on, we reimpose the conditions on $S$ from the previous
section.  In particular, $\g$ is a regular conjugacy class.  First, we
show that the action $\vr$ restricts to give an action on the space of
$S$-formal types of depth $r$.

\begin{prop}
The subspace $\AT(S, r) \subset \fs_{-r}^\vee / \fs_{0+}^\vee$ is stable under the
action of $\hW_S$.
\end{prop}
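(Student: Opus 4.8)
The plan is to verify, for $n \in N(S)$ and $A \in \AT(S,r)$, the three defining conditions of Definition~\ref{ftdef} for $\hvr(n)(A)$. As in the proof of Proposition~\ref{vrho}, I would first reduce, via Theorem~\ref{compthm}, to the situation where $\fs$ is graded compatible with a fixed point $x \in \Ao$, and fix a $w$-diagonalizer $g \in G(E)$ as in Theorem~\ref{gx}. Condition~(2) of Definition~\ref{ftdef} is a statement about $S$ alone, so it is inherited by $\hvr(n)(A)$. By Proposition~\ref{vrho}, $\hvr(n)$ preserves the finite-dimensional submodule $\fs^\vee_{-r}/\fs^\vee_{0+}$, so $\hvr(n)(A)$ again annihilates $\fs_{r+}$; and once condition~(3) is established it cannot annihilate $\fs_r$ either (an $S$-regular stratum has regular semisimple, in particular nonzero, homogeneous representative), so condition~(1) follows from~(3). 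Thus the whole point is condition~(3); I will treat $r=0$ separately at the end.

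Assume $r > 0$. Since $A \in \AT(S,r)$ and $x$ is compatible with $S$, Proposition~\ref{ftlem} gives that the stratum $(x,r,A^x)$ is $S$-regular. Fix a representative $\tA \in \pi_\fs^*(\fs^\vee_{-r})$ of $A$; then $\tA - \tx\ddz$ represents $A^x$, and because $\tx\ddz$ has filtration index $0 > -r$, the homogeneous representative $\tbo \in \hfg^\vee_x(-r)$ of $(x,r,A^x)$ is the degree-$(-r)$ part of $\tA$, which by graded compatibility equals $\pi_\fs^*(\bar A)$ for $\bar A \in \fs^\vee(-r)$ the leading part of $A$. By the argument of Proposition~\ref{leadingterm} together with Lemma~\ref{cores}\eqref{cores6}, $\fz(\tbo) = \fs$, so $Z^0(\tbo) = S$. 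The key technical point is that $\Ad^*(n)$ preserves the $x$-grading on $\fs^\vee$: conjugating by $g$ carries $n$ into $N\cdot T(E)$, where the action on $\ft(E)$ is Weyl-linear and hence grading-preserving, while (as in the proof of Proposition~\ref{vrho}) $g$ matches the gradings on $\ft(E)$ and $\fs(E)$. Granting this, for any lift $B \in \fs^\vee_{-r}$ of $\hvr(n)(A)$ the homogeneous representative of $(\hvr(n)(A))^x$ is the degree-$(-r)$ part of $\pi_\fs^*(B) - \tx\ddz$; since the affine correction $\rho_\fs((dn)n^{-1})$ lies in $\fs^\vee_0$ (Proposition~\ref{vrho}), which has index $> -r$, this leading part is $\Ad^*(n)\bar A$, and hence the homogeneous representative is $\pi_\fs^*(\Ad^*(n)\bar A) = \Ad^*(n)(\pi_\fs^*\bar A) = \Ad^*(n)\tbo$ by Lemma~\ref{cores}\eqref{cores5}. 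Its connected centralizer is $\Ad(n)\,Z^0(\tbo)\,\Ad(n)^{-1} = \Ad(n)\,S\,\Ad(n)^{-1} = S$, a maximal torus compatible with $x$. Therefore $(x,r,(\hvr(n)(A))^x)$ is graded regular with connected centralizer $S$, hence $S$-regular by Proposition~\ref{leadingterm}, which is condition~(3); with conditions~(1) and~(2) this gives $\hvr(n)(A) \in \AT(S,r)$.

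It remains to treat $r = 0$, where $S = \hT$ by the standing convention and $\AT(\hT,0)$ is the set of $A$ with $\a(\Res(\tA)) \notin \Z$ for every $\a \in \Phi$. Writing $n = z^\mu n_0$ with $\mu \in X_*(T)$ and $n_0 \in N$ of image $w \in W$, one has $(dn)n^{-1} = \mu\,\ddz$, so $\hvr(n)(A)$ corresponds to $w(\Res(\tA)) - \mu$; for each $\a \in \Phi$ the value $\a(w(\Res(\tA)) - \mu) = (w^{-1}\a)(\Res(\tA)) - \langle\a,\mu\rangle$ is not an integer, since $\langle\a,\mu\rangle \in \Z$ and $w^{-1}\a \in \Phi$. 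Hence $\hvr(n)(A) \in \AT(\hT,0)$, completing the argument.

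The step I expect to be the main obstacle is the bookkeeping in the $r > 0$ case, namely identifying the homogeneous representative of $(\hvr(n)(A))^x$ with $\Ad^*(n)\tbo$. This combines three facts that must be assembled carefully: that $\Ad^*(n)$ preserves the $x$-grading on $\fs^\vee$ (so the conjugated leading term is still homogeneous), that the affine correction $\rho_\fs((dn)n^{-1})$ is sub-leading (precisely where $r>0$ is used), and that $\pi_\fs^*$ is $N(S)$-equivariant and grading-compatible (Lemma~\ref{cores}). Everything else, including the entire $r=0$ case, is routine.
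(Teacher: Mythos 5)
Your argument is correct and follows essentially the same route as the paper's own proof: reduce to $x$ graded compatible with $\fs$, use Proposition~\ref{vrho} to see that $\hvr(n)$ preserves $\fs^\vee_{-r}/\fs^\vee_{0+}$ and that the affine correction lives in $\fs^\vee_0$, conclude for $r>0$ that the leading graded term transforms by $\Ad^*(n)$ (so the connected centralizer remains $S$), and for $r=0$ check the nonintegrality condition directly. The only cosmetic difference is that the paper's $r=0$ step cites Lemma~\ref{bnorm} to get $\Res((dn)n^{-1})\in\ft(\Z)$, where you instead factor $n=z^\mu n_0$ modulo $S_0$ and compute; and you spell out the verification of conditions~(1) and~(2) of Definition~\ref{ftdef}, which the paper leaves implicit.
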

\begin{proof}
  By Proposition~\ref{vrho}, $\fs^\vee_{-r} / \fs^\vee_{0+}$ is closed
  under the action of $\hW_S$.  Assume without loss of generality that
  $x \in \Ao$ is graded compatible with $\fs$, and write $X_{-r} \in
  \fs^\vee(-r)$ for the leading term of $X \in \AT(S,r)$.  Suppose
  that $\hw \in \hW$ with representative $n\in N(S)$.  The same
  proposition shows that $\vr(\hw) (X) \in \Ad^*(n) (X) + \fs^\vee_0$.

  First, we assume that $r > 0$.  Since $\Ad^*(n) (\fs(r)) = \fs(r)$,
  it follows that $\vr(\hw)(X)_{-r} = \Ad^*(n) (X_{-r})$.  Thus, $Z^0
  (\vr(\hw)(X)_{-r}) = \Ad^*(n) (Z^0(X_{-r})) = S$.  By
  Proposition~\ref{leadingterm}, $\vr(\hw) (X)$ determines a regular
  stratum.  Therefore, $\vr(\hw) (X) \in \AT(S,r)$.

  Now, we consider the case $r = 0$ and $S=\hT$.  Choose a
  representative $n \in N$ for $\hw$.  The proof of
  Proposition~\ref{ftlem} shows that $X\in\hft^\vee_{0} /
  \hft^\vee_{0+}$ is in $\AT(\hT,0)$ if and only if $\a(\Res(\tX_0))
  \notin \Z$ for every root $\a$. It is obvious that if $X$ satisfies
  this condition, then the same holds for $\Ad^*(n)(X)$.  Also,
  Lemma~\ref{bnorm} shows that $(dn) n^{-1} \in (-\widetilde{n
    x}+\hft_{0+}) \ddz$, so that $\Res((dn) n^{-1}) \in \ft(\Z) \ddz$.
  It follows immediately that $\a(\vr(\hw)(\Res(\tX_0)))\notin\Z$ for
  all $\a$, i.e., $\vr(\hw)(X)\in\AT(\hT,0)$.
\end{proof}

We now show that isomorphisms classes of flat $G$-bundles in
$\cC(S,r)$ can be identified with orbits in $\AT(S,r)$.

\begin{lem}\label{isolem}
Suppose that $x$ is compatible with $\fs$ and 
 $X \in \pi_\fs^* (\fs_{-r}^\vee) + \hfg^\vee_{x}$.  If $g \in
 \hG_{x,r}$ with $r>0$,
 $\rho_\fs (\Ad^*(g) (X) - (dg) g^{-1} ) \in \rho_\fs(X) + \fs^\vee_{0+}$.
\end{lem}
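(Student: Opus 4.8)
The plan is to linearize the gauge action. Since $r>0$, the group $\hG_{x,r}$ is pro-unipotent and $\exp\colon\hfg_{x,r}\to\hG_{x,r}$ is a bijection, so I can write $g=\exp(Y)$ with $Y\in\hfg_{x,r}$ and expand $\Ad^*(g)=\sum_{k\ge 0}\frac{1}{k!}\ad^*(Y)^k$. All computations will be carried out modulo $\hfg^\vee_{x,0+}$, using that $\rho_\fs(\hfg^\vee_{x,0+})\subset\fs^\vee_{0+}$ (recorded just before Lemma~\ref{cores}, a consequence of the compatibility of $x$ with $\fs$) and that the $x$-filtration is compatible with the bracket, so $\ad^*(\hfg_{x,r})$ raises it by $r$ (Proposition~\ref{ev}).

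First I would record that $X\in\hfg^\vee_{x,-r}$: writing $X=\pi_\fs^*(A)+C$ with $A\in\fs^\vee_{-r}$ and $C\in\hfg^\vee_x=\hfg^\vee_{x,0}$, this follows from $\pi_\fs^*(\fs^\vee_{-r})\subset\hfg^\vee_{x,-r}$ (Lemma~\ref{cores}\eqref{cores2}, whose proof uses only compatibility of $x$ with $\fs$) together with $\hfg^\vee_{x,0}\subset\hfg^\vee_{x,-r}$. Hence $\ad^*(Y)^k(X)\in\hfg^\vee_{x,(k-1)r}\subset\hfg^\vee_{x,0+}$ for all $k\ge 2$, and $\ad^*(Y)(C)\in\hfg^\vee_{x,r}\subset\hfg^\vee_{x,0+}$; moreover the Maurer--Cartan form $(dg)g^{-1}$ lies in $\hfg^\vee_{x,0+}$ by the remark following Lemma~\ref{actlem} applied to $g\in\hG_{x,r}\subset\hG_x$. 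Collecting these,
\[
\Ad^*(g)(X)-(dg)g^{-1}\ \in\ X+\ad^*(Y)\bigl(\pi_\fs^*(A)\bigr)+\hfg^\vee_{x,0+},
\]
and applying $\rho_\fs$ reduces the lemma to showing that $\rho_\fs\bigl(\ad^*(Y)(\pi_\fs^*(A))\bigr)$ lies in $\fs^\vee_{0+}$ --- in fact I expect this functional to vanish identically on $\fs$.

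That vanishing is the heart of the argument and the step I would handle most carefully. Since $\pi_\fs$ is a morphism of $\fs$-modules (a consequence of the invariance of the form) and $\fs$ is abelian, $\pi_\fs([Z,Y])=\ad(Z)\bigl(\pi_\fs(Y)\bigr)=[Z,\pi_\fs(Y)]=0$ for every $Z\in\fs$. Therefore, for $Z\in\fs$,
\[
\bigl(\ad^*(Y)(\pi_\fs^*(A))\bigr)(Z)=-\pi_\fs^*(A)\bigl([Y,Z]\bigr)=A\bigl(\pi_\fs([Z,Y])\bigr)=0,
\]
so $\ad^*(Y)(\pi_\fs^*(A))$ restricts to zero on $\fs$ and hence has zero image under $\rho_\fs$. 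Combined with the previous paragraph, this yields $\rho_\fs\bigl(\Ad^*(g)(X)-(dg)g^{-1}\bigr)\in\rho_\fs(X)+\fs^\vee_{0+}$, as required.

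The one point that needs attention is the bookkeeping of filtration indices --- verifying that every discarded term really lies in $\hfg^\vee_{x,0+}$, which is precisely where the hypotheses $r>0$ and $X\in\pi_\fs^*(\fs^\vee_{-r})+\hfg^\vee_x$ are used --- together with the observation that the properties of $\pi_\fs$ and $\rho_\fs$ borrowed from Lemma~\ref{cores} require only that $x$ be compatible with $\fs$. If one prefers not to rely on that observation, one can instead first invoke Theorem~\ref{compthm} to conjugate $g$, $X$, and $\fs$ by a suitable element of $\hG_x$ and thereby reduce to the case in which $x$ is graded compatible with $\fs$; the hypotheses of the lemma are manifestly preserved by such a conjugation.
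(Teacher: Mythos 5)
Your proof is correct and follows essentially the same route as the paper's: both write $g=\exp(Y)$ with $Y\in\hfg_{x,r}$, observe that the quadratic and higher terms of $\Ad^*(g)$ together with $(dg)g^{-1}$ land in $\hfg^\vee_{x,0+}$ (which $\rho_\fs$ sends into $\fs^\vee_{0+}$ by compatibility), and reduce to killing the linear term $\ad^*(Y)(\pi^*_\fs(A))$ under $\rho_\fs$. Where the paper cites Remark~\ref{tamermk} for this last step, you re-derive it directly from the fact that $\pi_\fs$ is an $\fs$-module map and $\fs$ is abelian --- which is in fact the same mechanism underlying the paper's Remark --- and you are right that this, like part~\eqref{cores2} of Lemma~\ref{cores}, depends only on compatibility of $x$ with $\fs$, not on the graded regular stratum hypothesis carried by the lemma's formal statement.
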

\begin{proof}

  Lemma~\ref{actlem}\eqref{act3} implies that $\Ad^*(g) (X) - (dg)
  g^{-1} \in \Ad^*(g) (X) -\Ad^*(g) (\tx \ddz) + \tx \ddz +
  \hfg^\vee_{x+}.$ Since $g \in \hG_{x+}$, it is clear that $-\Ad^*(g)
  (\tx \ddz) + \tx \ddz \in \hfg^\vee_{x+}$ It thus suffices to show
  that $\rho_\fs (\Ad^*(g) (X)) \in \rho_\fs(X) + \fs^\vee_{0+}$.
  Write $g = \exp (Y)$, for $Y \in \hfg_{x, r}$.  Then, $\Ad^*(g) (X)
  \in X + \ad^*(Y) (X) + \hfg^\vee_{x+}$.  By Remark~\ref{tamermk},
  $\rho_\fs(X + \ad^*(Y) (X) + \hfg^\vee_{x+}) \in \rho_\fs(X) +
  \fs^\vee_{0+}$.
\end{proof}

\begin{thm}\label{isothm}
   There is a
  bijection between the set of isomorphism classes of formal flat
  $G$-bundles that contain an $S$-regular stratum of depth $r$
  and the set $\AT(S, r) / \hW_S$.
\end{thm}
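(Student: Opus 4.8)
The plan is to build a surjection $\Psi$ from $\AT(S,r)$ to the set of isomorphism classes in $\cC(S,r)$, show it is constant on $\hW_S$-orbits, and then prove that two formal types with the same image lie in a common orbit; the resulting bijection between $\AT(S,r)/\hW_S$ and that set of isomorphism classes is the assertion.

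First I would construct $\Psi$. Fix a point $x\in\Ao$ compatible with $S$. For $A\in\AT(S,r)$, choose a lift $\tA\in\pi_\fs^*(\fs_{-r}^\vee)$ and let $\n_{\tA}$ be the flat structure on $\sG^\triv$ with $[\n_{\tA}]_{\id}=\tA$; by construction $(\sG^\triv,\n_{\tA})$ contains the $S$-regular stratum $(x,r,A^x)$, hence lies in $\cC(S,r)$, and we set $\Psi(A)$ equal to its isomorphism class. This does not depend on the lift: two lifts differ by an element of $\pi_\fs^*(\fs_{0+}^\vee)\subset\hfg^\vee_{x+}$, so Theorem~\ref{diagthm}\eqref{diag2} produces a gauge transformation in $\hG_{x+}$ between the two connection matrices. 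Surjectivity is Theorem~\ref{diagthm}\eqref{diag1}: every $(\sG,\n)\in\cC(S,r)$ has, in a suitable trivialization, matrix $\tA-\tx\ddz$ with $\tA\in\pi_\fs^*(\fs_{-r}^\vee)$, and the resulting formal type is automatically regular (the degree $-r$ part is regular semisimple when $r>0$, nonresonant when $r=0$), so $(\sG,\n)\cong(\sG^\triv,\n_{\tA})$ with $A\in\AT(S,r)$.

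Second, I would show $\Psi$ factors through $\hW_S$-orbits. Given $\hw\in\hW_S$ with representative $n\in N(S)$, gauge transformation by $n$ identifies $(\sG^\triv,\n_{\tA})$ with $(\sG^\triv,\n_{n\cdot\tA})$, which again contains an $S$-regular stratum since $n$ normalizes $S$. By Lemma~\ref{cores}\eqref{cores5}, $\Ad^*(n)(\tA)=\pi_\fs^*(\Ad^*(n)\rho_\fs(\tA))$, and the computation of $(dn)n^{-1}$ in the proof of Proposition~\ref{vrho} shows that $\rho_\fs(n\cdot\tA)$ represents $\vr(\hw)(A)$. Reducing $n\cdot\tA$ to standard form via Theorem~\ref{diagthm}\eqref{diag1}, while tracking the restriction with Lemma~\ref{isolem} and Theorem~\ref{diagthm}\eqref{diag3} (the latter to absorb the residual $\pi_\fs^*(\fs_{0+}^\vee)$-ambiguity), we obtain $(\sG^\triv,\n_{n\cdot\tA})\cong(\sG^\triv,\n_{\tA'})$ with $A'=\vr(\hw)(A)$. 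Hence $\Psi(\vr(\hw)(A))=\Psi(A)$, and $\Psi$ descends to a surjection $\bar\Psi$ from $\AT(S,r)/\hW_S$ onto the set of isomorphism classes in $\cC(S,r)$.

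The heart of the proof is injectivity of $\bar\Psi$: if $g\in\hG$ realizes an isomorphism $(\sG^\triv,\n_{\tA^1})\cong(\sG^\triv,\n_{\tA^2})$, i.e., $g\cdot\tA^1=\tA^2$ with $\tA^i\in\pi_\fs^*(\fs_{-r}^\vee)$ and $A^i\in\AT(S,r)$, then $A^2\in\hW_S\cdot A^1$. When $g\in\hG_x$ this is comparatively clean: a leading-term comparison shows that $g$ must carry the Cartan subalgebra centralizing the stratum of $\n_{\tA^1}$ at $x$ to the one centralizing the stratum of $\n_{\tA^2}$ at $x$, both of which equal $\fs$, so the image of $g$ in $\hG_x/\hG_{x+}\cong H_x$ normalizes $S$; using Proposition~\ref{leadingterm}\eqref{leadingterm2} to clean up the lower-order terms, and Lemma~\ref{isolem} together with the formula defining $\vr$ to track the restriction, one obtains $A^2=\vr(\hw)(A^1)$ for $\hw$ the corresponding class in $\hW_S$. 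The obstacle is the general case, in which $g$ moves $x$ and may carry it out of $\Ao$, so that the term $(dg)g^{-1}$ is no longer governed by the $x$-filtration. To deal with this I would use Corollaries~\ref{regtorus} and \ref{strongcompat} --- which pin down the points of the building that can carry an $S$-regular stratum --- to conclude that $gx$ lies in the $\hW_S$-orbit of $x$ (up to an adjustment by $\hG_{gx,+}$), so that left-multiplying $g$ by a suitable $n\in N(S)$ and renormalizing the target to standard form, which by the invariance of the previous paragraph only replaces $A^2$ by another point of $\hW_S\cdot A^2$, reduces us to the already-treated case $g\in\hG_x$. I expect this reduction --- locating the base point of the transported stratum, and verifying that the $N(S)$-factor that is removed corresponds exactly to an element of $\hW_S$ --- to be the main difficulty, along with the separate bookkeeping needed to carry the nonresonance condition through the $r=0$ case.
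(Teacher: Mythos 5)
Your overall architecture (a surjection $\Psi$ from $\AT(S,r)$ to isomorphism classes, constancy on $\hW_S$-orbits, injectivity of the induced map) matches the paper, and the first two steps are essentially Theorem~\ref{diagthm} together with the equivariance of $\rho_\fs$; those parts are fine. The gap is in your reduction for injectivity. You propose to use Corollaries~\ref{regtorus} and \ref{strongcompat} to conclude that $gx$ lies in the $\hW_S$-orbit of $x$. But Corollary~\ref{strongcompat} only says that points supporting a regular stratum of type $\g$ lie in $\hW(\tx+\fs(0))$, and when $\fs(0)\ne 0$ (i.e.\ $S$ is not elliptic modulo the center) this is a \emph{continuum}, whereas $\hW_S\cdot x$ is discrete ($\hW_S\cong W_S\ltimes S/S_0$ is finitely generated); even for elliptic $S$ the full $\hW$-orbit of $\tx$ is generally much larger than $\hW_S\cdot x$. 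So the claimed localization of $gx$ fails. Moreover, even after arranging that both strata are based at the same point $x$, left-multiplying $g$ by an element of $N(S)$ does not put $g$ into $\hG_x$: since elements of $N(S)$ need not fix $x$, what one can actually extract is $g\in\hG_{x,r}\,N(S)\,\hG_{x,r}$, which requires corrections on \emph{both} sides.

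The paper's route avoids both problems. First, Proposition~\ref{ftlem} shows that for $r>0$ a formal type determines a regular stratum at \emph{every} point compatible with $S$ (and for $r=0$ at a common open set of points in the alcove), so one may rebase the stratum coming from $g\phi$ at the original point $x$ without touching $g$ at all --- no analysis of $gx$ is needed. Second, one writes $g=p_1np_2$ via the affine Bruhat decomposition with $p_i\in\hG_x$ and $n\in\hN$ (the normalizer of the \emph{split} torus, not of $S$), and then uses Proposition~\ref{leadingterm}\eqref{leadingterm2} to produce $q_1,q_2\in\hG_{x,r}$ so that $h=q_1gq_2$ conjugates one regular semisimple element of $\pi_\fs^*(\fs^\vee)$ to another, forcing $h\in N(S)$; Lemma~\ref{redstep}\eqref{redstep2} then tracks $\rho_\fs$ through these corrections to yield $Y=\hvr(h)(X)$. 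You would need to replace your reduction step with an argument of this kind (or prove directly that $g\in\hG_x N(S)\hG_x$) for the injectivity step to go through.
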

\begin{proof}
  Theorem~\ref{diagthm} shows that whenever $(\sG, \n)$ contains a
  regular stratum $(x, r, \b)$, there is a trivialization $\phi$ such
  that $[\n]_\phi \in \pi_\fs^*(\fs_{-r}^\vee)$.  Moreover, the
  isomorphism class of $(\sG, \n)$ depends only on the restriction of
  $[\n]_{\phi}$ to $\fs^\vee_{-r} / \fs^\vee_{0+}$, giving rise to an
  element $A\in\AT(S,r)$.  One further sees that every formal type in
  the $\hW_S$-orbit of $A$ can be obtained from $(\sG, \n)$ by
  changing the trivialization by elements of $\hN$.  It remains to
  show the converse.

  With $\phi$ a trivialization for $\sG$ as above, take $g \in \hG$
  such that $(\sG, \n)$ contains the regular strata $(y, r, \b')$ with
  respect to the trivialization $g \phi$.  Note that the depths are
  necessarily the same by Theorem~\ref{MP}.  We may assume that
  $[\n]_{g \phi} \in \pi_\fs^*(\fs_{-r}^\vee)$ by
  Theorem~\ref{diagthm}\eqref{diag1}, so that $\b'=\rho_\fs([\n]_{g
    \phi})^y$. If $r>0$, Proposition~\ref{ftlem} implies that
  $(x,r,\rho_\fs([\n]_{g \phi})^x)$ is a regular stratum contained in
  $(\sG, \n)$ with respect to the trivialization $g\phi$.  On the
  other hand, if $r=0$, the same proposition shows that for any $x'$
  in a nonempty open subset of the fundamental alcove, $(\sG, \n)$
  contains the regular strata $(x',0,\rho_{\hft}([\n]_{\phi})^{x'})$
  and $(x',0,\rho_{\hft}([\n]_{g\phi})^{x'})$ with respect to $\phi$
  and $g\phi$ respectively.  Thus, we may assume without loss of
  generality that $x=y$ and further, that $\hG_x$ is the standard
  Iwahori subgroup $I$ when $r=0$.

  First, assume that $r > 0$.  Write $g = p_1 n p_2$ using the affine
  Bruhat decomposition, where $p_1, p_2 \in \hG_x$ and $n \in \hN$.
  By Lemma~\ref{actlem}\eqref{act3}, $[\n]_{p_2 \phi} - \tx \ddz \in
  \Ad^*(p_2) ([\n]_\phi - \tx \ddz) + \hfg^\vee_{x+}$.  Applying
  $\Ad^*(p_2^{-1})$ to both sides,
\begin{equation*}
[\n]_\phi - p_2^{-1} (d p_2) \in [\n]_\phi - \tx \ddz + \Ad^*(p_2^{-1}) (\tx \ddz) + 
\hfg^\vee_{x+} \subset [\n]_\phi + \hfg^\vee_x.
\end{equation*}
By Lemma~\ref{leadingterm}\eqref{leadingterm2}, we see that there exists
$q_2 \in \hG_{x, r}$ such that
\begin{equation}\label{ith1}
Z_1 := \Ad^*(q_2^{-1}) \left( [\n]_{\phi} -p_2^{-1} d p_2  \right) \in
[\n]_\phi +\pi_\fs^*( \fs_0^\vee).
\end{equation}
Using the fact that $(dn)n^{-1}\in\hfg^\vee_x$, a similar argument
shows that there exists $q_1 \in \hG_{x, r}$ such that
\begin{equation} \label{ith2}
Z_2 :=\Ad^*(q_1) \left([\n]_{g \phi} + (d p_1) p_1^{-1} + \Ad^*(p_1) ((d n) n^{-1}) \right)
\in
[\n]_{g \phi} + \pi_\fs^*(\fs_0^\vee).
\end{equation}

Note that $Z_1$ and $Z_2$ are regular semisimple elements of
$\pi_\fs^*(\fs^\vee)$ since they both determine regular strata.
Setting $h = q_1 g q_2$, the following calculation shows that
$\Ad^*(h) (Z_1) = Z_2$ and thus $h \in N(S)$:
\begin{equation*}
\begin{aligned}
\Ad^*(h) (Z_1) & = 
\Ad^*(q_1 p_1 n) \left( [\n]_{p_2 \phi} \right)  \\
& = \Ad^*(q_1 p_1) \left( [\n]_{n p_2 \phi}  + (d n) n^{-1} \right)  \\
& = \Ad^*(q_1) \left( [\n]_{g \phi}  + \Ad^*(p_1) \left((d n) n^{-1}\right) + (d p_1) p_1^{-1} \right) = Z_2. 
\end{aligned}
\end{equation*}

Now, let $X = \rho_\fs([\n]_\phi) + \fs_{0+}^\vee$ and $Y =
\rho_\fs([\n]_{g \phi}) + \fs_{0+}^\vee$, so that $X, Y \in \AT(S,
r)$.  We will show that $Y = \hvr(h) (X)$.  Applying
Lemma~\ref{redstep}\eqref{redstep2}, we see that $\rho_\fs([\n]_{q_1 g
  \phi }) \in \rho_\fs([\n]_{g \phi})+ \fs_{0+}^\vee$ and
$\rho_\fs([\n]_{q_2^{-1} \phi }) \in \rho_\fs([\n]_{\phi})+
\fs_{0+}^\vee$.  Since $\rho_\fs$ is an $N(S)$-map by
Lemma~\ref{cores}\eqref{cores5}, it follows that
\begin{multline*}
  \rho_\fs ([\n]_{h q_2^{-1} \phi}) = \rho_\fs (\Ad^*(h)
  ([\n]_{q_2^{-1}\phi})) - (dh)h^{-1}) \\ = \Ad^*(h) (\rho_\fs
  ([\n]_{q_2^{-1} \phi})) - \rho_\fs( (dh) h^{-1})\in \rho_\fs
  ([\n]_{h \phi})+\fs_{0+}^\vee.
\end{multline*}
Finally, 
\begin{multline*}
  Y  = \rho_\fs([\n]_{g \phi}) + \fs_{0+}^\vee =  \rho_\fs([\n]_{q_1 g \phi}) + \fs^\vee_{0+}\\
  = \rho_\fs([\n]_{h q_2^{-1} \phi}) + \fs^\vee_{0+} =
  \rho_\fs([\n]_{h \phi}) + \fs^\vee_{0+} = \hvr(h) (X).
\end{multline*}

Now, take $r=0$ with $\hG_x=I$.  Again, write $g = p_1 n p_2$ with
$p_i \in I$ and $n \in \hN$.  Furthermore, since $I = T \ltimes I_+$,
we may assume that $p_1, p_2 \in \hG_{x+}=I_+$.

Since $p_i \in I_+$, $\rho_{\hft} ([\n]_{p_2 \phi}) \in \rho_{\hft}
([\n]_{\phi}) + \hft^\vee_{0+}$ and $\rho_{\hft} ([\n]_{p_1^{-1} g
  \phi}) \in \rho_{\hft} ([\n]_{g \phi}) + \hft^\vee_{0+}$ by
Lemma~\ref{actlem}\eqref{act4}.  Finally,
\begin{multline*}
  \rho_{\hft} ([\n]_{g \phi})\in \rho_{\hft} ([\n]_{n p_2 \phi}) +
  \hft^\vee_{0+} = \Ad^*(n) (\rho_{\hft} ([\n]_{p_2 \phi})) - (dn)
  n^{-1} + \hft^\vee_{0+}\\ = \Ad^*(n) (\rho_{\hft} ([\n]_\phi)) -
  (dn) n^{-1} + \hft^\vee_{0+}= \hvr(n) (\rho_{\hft} ([\n]_\phi) +
  \hft^\vee_{0+}).\qedhere
\end{multline*}
\end{proof}

\begin{cor} The moduli space of $\cC(S,r)$ is $\AT(S,r)/\hW_S$.  Moreover,
  the deframing functor $\cCfr_x(S,r)\to\cC(S,r)$ corresponds to the
  quotient map $\AT(S,r)\to\AT(S,r)/\hW_S$ on moduli spaces when $r>0$
  or $r=0$ and $H_x=G$.
\end{cor}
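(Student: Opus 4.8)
The plan is to deduce the Corollary from Theorem~\ref{isothm} together with the theorem of the previous subsection computing the moduli space of the framed categories $\cCfr_x(S,r)$. For the first assertion I would begin by observing that the objects of $\cC(S,r)$ are exactly the formal flat $G$-bundles that contain an $S$-regular stratum of depth $r$: a regular stratum is fundamental, so Theorem~\ref{MP} forces any $S$-regular stratum contained in a bundle of slope $r$ to have depth $r$ (for $r>0$ this is Theorem~\ref{MP}\eqref{MP3}, and for $r=0$ it follows from the assertion there that the slope is positive precisely for irregular bundles). Hence Theorem~\ref{isothm} identifies the set of isomorphism classes of $\cC(S,r)$ with $\AT(S,r)/\hW_S$, which is the first claim.

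For the second assertion I would unwind both moduli descriptions using a common normal form. By Theorem~\ref{diagthm}\eqref{diag1}, every object of $\cCfr_x(S,r)$ is isomorphic to one of the form $\cF=(\sG,\n,(x,r,\b),\phi)$ with $[\n]_\phi\in\pi_\fs^*(\fs^\vee_{-r})$; the moduli-space theorem attaches to its class the formal type $A=\rho_\fs([\n]_\phi)+\fs^\vee_{0+}$, and its proof shows this assignment is well defined and bijective onto $\AT(S,r)$ when $r>0$ and onto $\AT_x(\hT,0)$ when $r=0$. On the other hand, the proof of Theorem~\ref{isothm} shows that the isomorphism class of the underlying bundle $(\sG,\n)$ in $\cC(S,r)$ is recorded as the $\hW_S$-orbit of precisely this same $A$, obtained from the same normal form $[\n]_\phi\in\pi_\fs^*(\fs^\vee_{-r})$, and that changing $\phi$ by an element of $\hN$ moves $A$ within its $\hW_S$-orbit while no other change of trivialization does. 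Consequently the map on moduli spaces induced by the deframing functor $\cCfr_x(S,r)\to\cC(S,r)$ is $A\mapsto\hW_S\cdot A$. This is the quotient map $\AT(S,r)\to\AT(S,r)/\hW_S$ exactly when its source has already been identified with all of $\AT(S,r)$, which by the moduli-space theorem is the case when $r>0$, and when $r=0$ precisely if $H_x=G$ (so that $\AT_x(\hT,0)=\AT(\hT,0)$); in the remaining case $r=0$, $H_x\ne G$, the deframing functor still induces the restriction of the quotient map, but only to the proper subset $\AT_x(\hT,0)$, which is why that case is omitted.

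The argument is essentially bookkeeping, so I do not expect a genuine obstacle; the one point needing care is the compatibility of the two moduli identifications, namely that the representative $[\n]_\phi\in\pi_\fs^*(\fs^\vee_{-r})$ produced by Theorem~\ref{diagthm}\eqref{diag1} is the very one used in the proof of Theorem~\ref{isothm} to extract the $\hW_S$-orbit. This follows once the underlying trivializations are matched, using Theorem~\ref{diagthm}\eqref{diag3} (any two representatives in $\pi_\fs^*(\fs^\vee_{-r})$ determining the same stratum differ by an element of $\pi_\fs^*(\fs^\vee_{0+})$, hence define the same formal type) and, in the case $r=0$, Proposition~\ref{ftlem}, which guarantees that the relevant strata stay regular when one passes between compatible points.
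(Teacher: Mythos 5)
Your proof is correct and takes essentially the same approach as the paper: the paper gives no explicit argument for this Corollary because, exactly as you note, it follows by combining the (unnumbered) moduli-space theorem for $\cCfr_x(S,r)$ with Theorem~\ref{isothm}, both of which attach the same formal type $A=\rho_\fs([\n]_\phi)+\fs^\vee_{0+}$ to the same normal form $[\n]_\phi\in\pi_\fs^*(\fs^\vee_{-r})$ produced by Theorem~\ref{diagthm}\eqref{diag1}. Your bookkeeping of the $r=0$, $H_x\ne G$ case (where the image is only $\AT_x(\hT,0)$) is precisely the reason for the stated restriction, and is consistent with the remark immediately preceding the unnumbered theorem.
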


Thus, the category $\cCfr_x(S,r)$ (with $H_x=G$ if $r=0$) may be
viewed as a ``resolution'' of $\cC(S,r)$.

\end{document}